\renewcommand{\@eqnnum}{%
    {\normalfont\normalcolor%
    \theequation}}
\renewcommand{\theequation}{%
    E.\arabic{equation}}
\newtheorem{thm}{Theorem}[section]
\newtheorem{prp}[thm]{Proposition}
\newtheorem{cor}[thm]{Corollary}
\newtheorem{lm}[thm]{Lemma}
\newtheorem{rmq}[thm]{Remark}
\theoremstyle{definition}
\newtheorem{df}[thm]{Definition}
\newtheorem{ex}[thm]{Example}
\newtheorem{conj}{Conjecture}
\newcommand\NN{\mathbb{N}}
\newcommand{\bsss}{\boldsymbol{\sigma}}
\newcommand{\ssi}{\sigma_{i}}
\newcommand{\ssj}{\sigma_{j}}
\newcommand{\bbw}[1]{\boldsymbol{#1}}
\newcommand{\hpiix}{\pi^*_{\scriptscriptstyle I,X}}
\newcommand{\piix}{\pi_{\scriptscriptstyle I,X}}
\newcommand{\KKI}{\kappa_I}
\title{On Parabolic subgroups of Artin-Tits groups}
\author{Eddy Godelle}
\begin{document}
\begin{abstract} We address the conjecture which states that an intersection of parabolic subgroups of  an Artin-Tits group  is a parabolic subgroup.  We prove that the conjecture is equivalent to a, a priori, weaker conjecture.  We also prove the conjecture in a specific case.  Along the way,  we provide  short and almost self-contain algebraical proofs of several classical results on Artin-Tits groups,  such as those of Van der Lek on intersection of  standard parabolic subgroups.  
\end{abstract}
\maketitle
\section*{Introduction}

We fix a finite simplicial labelled graph~$\Gamma = (I,E,m)$  with vertex set~$I$,  edge set~$E$ and label map~$m:~E\to \NN_{\geq 2}$.   We also fix two sets
$\Sigma_I = \{\ssi\mid i\in I\}$ and $S_I = \{s_i\mid i\in I\}$ that are  indexed by $I$.   If $X$ is a subset of $I$,  by $\Gamma_X = (X,E_X,m_X)$ we denote the full (labelled) subgraph of $\Gamma$ spanned by~$X$.   We set $\Sigma_X = \{\ssi \in \Sigma_I \mid i\in X\}$ and $S_X = \{s_i\in S_I \mid i\in X\}$.  If $S$ is a set,  by $S^*$ we denote the monoid of words on $S$.  The length of a word $w$ on $S$ is denoted by $\ell_S(w)$.  For convenience,  for $\{i,j\}$ in $E$,  we will often  write $m(i,j)$ for $m(\{i,j\})$.  The  Artin-Tits group~$A_I$ generated by $\Sigma_I$ and associated with $\Gamma$ is defined by the following  presentation of group:  
  \begin{equation} A_I= \biggl\langle \Sigma_I\  \biggl | \    \underbrace{\ssi\ssj\ssi\cdots}_{m(e) \textrm{ terms}}  = \underbrace{\ssj\ssi\ssj\cdots}_{m(e) \textrm{ terms}} \textrm{ for } e = \{i,j\}\in E  \biggr\rangle 
\end{equation}
The  Coxeter group  $W_I$ generated by $S_I$ and associated with $\Gamma$ is  defined by the following  presentation: 
 \begin{equation}  W_I = \biggl\langle S \  \biggl | \   \begin{array}{ll} s^2 = 1 &\textrm{ for } s\in S \\ \underbrace{s_is_js_i\cdots}_{m(e) \textrm{ terms}}  = \underbrace{s_js_is_j\cdots}_{m(e) \textrm{ terms}} & \textrm{ for } e = \{i,j\}\in E \end{array}  \biggr\rangle \end{equation} 
 
To speak about the relations that appear in the presentation of $A_I$ or to their corresponding relations in the presentation of $W_I$,  we  will speak of  the \emph{braid relations} of the presentation.  
\begin{ex} \label{exe:base:1} Consider $I = \{a,b,c\}$ and $\Gamma_I$ as below.  Then,  $A_I$ is the braid group on four strands. The groupe $W_I$ is the permutation group on $4$ elements where $S_I$ consists on the  elementary transpositions. \\   

\begin{minipage}[c]{0.50\linewidth} 
\begin{center}$W_I = \biggl\langle s_a, s_b, s_c \  \biggl | \
 \begin{array}{l} 
 s_a^2 = s_b^2  =  s_c^2  = 1 \\
 s_as_bs_a = s_bs_as_b\\ 
 s_bs_cs_b = s_cs_bs_c\\ 
 s_as_c = s_cs_a \end{array}  
 \biggr\rangle$  
 \end{center}
 \end{minipage}
 \begin{minipage}[c]{0.45\linewidth} 
 \begin{center}
\begin{tikzpicture}[decoration={brace}][scale=2]
\draw (2.64,-1.3) node {$c$};
\draw[very thick,fill=black] (2,0) circle (.1cm);
\draw[very thick,fill=black] (3.3,0) circle (.1cm);
\draw[very thick,fill=black] (2.65,-1) circle (.1cm);
\draw[very thick] (2,0) -- +(1.3,0);
\draw[very thick] (2,0) -- +(0.65,-1);
\draw[very thick] (3.3,0) -- +(-0.65,-1);
\draw (2.65,0.25) node {$3$};
\draw (3.15,-0.6) node {$3$};
\draw (2.05,-0.6) node {$2$};
\draw (1.88,0.25) node {$a$};
\draw (3.40,0.3) node {$b$};
\end{tikzpicture}
\end{center}
\end{minipage}

\end{ex}

Note that $\Gamma$ is not the classical Coxeter graph associated with the above presentation (here no edge means no relation, and a commutation relation corresponds to an edge labelled with $2$).  
If $w$ is a word on $S_I$ or on $\Sigma_I\cup\Sigma_I^{-1}$,  by $\bbw{w}$  (in bold) we denote the corresponding element in $W_I$, or $A_I$,  respectively.   We will simply write $\ell_I$ for both $\ell_{S_I}$ and $\ell_{\Sigma_I\cup \Sigma_I^{-1}}$.  If $w'$ is another word on the same set,  we write $w\equiv_I  w'$ when $\bbw{w} = \bbw{w}'$ in the corresponding group.  Let $\theta^*_I : (\Sigma_I\cup \Sigma_I^{-1})^*  \to S_I^*$  be the morphism of monoids  that sends both  $\sigma_i$ and $\sigma_i^{-1}$ onto  $s_i$.  Clearly,  the morphism $\theta^*_I$ induces a morphism of groups $\theta_I : A_I \to W_I$. The kernel of $\theta_I$ is called the colored Artin-Tits group and will be denoted by $CA_I$.  The length $\ell_I(\bbw{w})$ of $\bbw{w}$ in $W_I$  is defined as $\ell_I(\bbw{w}) = \min\{\ell_I(v) \mid v\in S_I^* \textrm{ with } \bbw{v} = \bbw{w} \}$.  When $\ell_I(w) = \ell_I(\bbw{w})$,  we say that $w$ is a \emph{reduced} word.  If $X\subseteq I$,  it is immediate that the inclusion maps $\Sigma_X\to \Sigma$ and $S_X \to S$ extend to morphisms of groups $\iota_X : A_X\to A_I$ and $\tau_X: W_X\to W_I$.   This is well-known that these morphisms are into~\cite{VdL}.   But we are going here to provide a new direct and short algebraical proof that $\iota_X$ is into.  So,  in the sequel we identify $W_X$ with its image  by $\tau_X$ in $W_I$.  But,  at this stage,  we do not consider as known that~$\iota_X$ is into.   We denote by $A^+_I$  the submonoid of $A_I$ generated by $S_I$.  This is known by~\cite{Par2002} that $A^+_I$ and $A_I$,   possess the same presentation,  but considered as a presentation of monoid for the former, and considered as a presentation of group for the latter.  However,  we do not need to assume this result known when proving our results.  We define $\theta^*_X$,  $\theta_X$,  $A^+_X,  \ell_X$ and $\equiv_X$ similarly to $\theta^*_I$,  $\theta_{I}$,  $A^+_I,  \ell_I$ and $\equiv_I$,  respectively.   Our first objective it to  prove Proposition~\ref{main:prop2}.    This proposition claims the existence of  two maps $\hpiix$  and $\piix$  and states several of their properties.  We will see (Proposition~\ref{prp:lien:paris} and Corollary~\ref{cor:lien:paris})  that these two maps  are indeed the map $\hat{\pi}_X$ and $\pi_X$ defined  in~\cite{BlPa2022} (see also \cite{ChP2014,GoPa2012}).

\begin{prp}\label{main:prop2}  Let $X\subseteq I$.  There exists a map  $\hpiix: (\Sigma_I \cup \Sigma_I^{-1})^* \to (\Sigma_X \cup \Sigma_X^{-1})^*$ such that 
\begin{enumerate}
\item  For any word $\omega$ on $\Sigma_I \cup \Sigma_I^{-1}$,  we have $\ell_X(\hpiix(\omega)) \leq \ell_I(\omega)$. The equality holds  if and only if $\omega$ is a word on $\Sigma_X \cup \Sigma_X^{-1}$.   In  the latter case,  $\hpiix(\omega) = \omega$. 
\item if $\omega$ and $\omega'$  are words on $\Sigma_I \cup \Sigma_I^{-1}$,  then $\hpiix(\omega)$ is a prefix of $\hpiix(\omega\omega')$. 
\item \cite{BlPa2022} The map $\hpiix$  induces a map $\piix : A_I \to A_X$.
\item  If $\bbw{\omega}$ lies in $A_I$  then  $\bigg(\theta_X(\piix(\bbw{\omega}))\bigg)^{-1}\theta_I(\bbw{\omega})$ is $(X,\emptyset)$-reduced and $$\ell_I(\theta_I(\bbw{\omega})) =\ell_I\bigl(\theta_X(\piix(\bbw{\omega}))\bigr) +  \ell_I\bigg(\big(\theta_X(\piix(\bbw{\omega}))\big)^{-1}\theta_I(\bbw{\omega})\bigg)$$
\item \cite{BlPa2022}  The set map $\piix : A_I \to A_X$ restricts to an homomorphism~$\piix : CA_I \to CA_X$.
\item We have $\piix(A_I^+) = A_X^+$. 
\item For any word $\omega$ on $\Sigma_X \cup \Sigma_X^{-1}$  and any $\bbw{\omega}'$ in $A_I$,  one has  $\piix(\bbw{\omega}\bbw{\omega}') = \piix(\bbw{\omega})\,\piix(\bbw{\omega}')$.
\item   If $\bbw{\omega}$ lies in $CA_I$  then for any $\bbw{\omega}'$ in $A_I$,  one has  $\piix(\bbw{\omega}\bbw{\omega}') = \piix(\bbw{\omega})\piix(\bbw{\omega}')$.
\item If $Y\subseteq I$ and $\omega$ is a word on $\Sigma_Y \cup \Sigma_Y^{-1}$,  then  $\hpiix (\omega)$ is a word on  $\Sigma_{Y\cap X} \cup \Sigma_{Y\cap X}^{-1}$ and the restriction of  $\hpiix$ to  $(\Sigma_{Y} \cup \Sigma_{Y}^{-1})^*$  is $\pi^*_{{\scriptscriptstyle Y,Y\cap X}}$.  
\item Let $X, Y\subseteq I$ and $\omega$ be a word on $\Sigma_I \cup \Sigma_I^{-1}$,  then  $\pi^*_{I,Y}(\pi^*_{I,X}(\omega))
 = \pi^*_{I,X}(\pi^*_{I,Y}(\omega)) = \pi^*_{I,X\cap Y}(\omega)$
\item Let $X\subseteq Y\subseteq I$ and $\omega$ be a word on $\Sigma_I \cup \Sigma_I^{-1}$,  then $\pi^*_{Y,X}(\pi^*_{I,Y}(\omega))=\pi^*_{I,X}(\omega)$.
 \end{enumerate}
\end{prp}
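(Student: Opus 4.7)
The plan is to \emph{construct} $\hpiix$ by induction on the length of the word $\omega$, mirroring the Coxeter-theoretic coset decomposition of $W_I$ modulo $W_X$. Recall that every $w \in W_I$ admits a unique factorisation $w = vu$ with $v \in W_X$ and $u$ in the set ${}^{X}W$ of $(X,\emptyset)$-reduced elements (those $u$ for which $\ell_I(s_j u) > \ell_I(u)$ for every $j \in X$), and this factorisation is length-additive. The design goal is that $\theta_X(\piix(\bbw\omega))$ shall be the $W_X$-part of $\theta_I(\bbw\omega)$; property (iv) then becomes built-in.

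Concretely, set $\hpiix(\varnothing) = \varnothing$ and, given $\omega = \omega' \sigma_i^\epsilon$, assume inductively that one has not only $\hpiix(\omega')$ but also the $(X,\emptyset)$-reduced element $u' \in {}^{X}W$ such that $\theta_I(\bbw{\omega'}) = \theta_X(\bbw{\hpiix(\omega')}) \cdot u'$. The key ingredient, which I would isolate as a short preliminary Coxeter lemma, is the following trichotomy for $u' s_i$: either (a) $u' s_i \in {}^{X}W$; or (b) $\ell_I(u' s_i) = \ell_I(u')+1$ but $u' s_i = s_j u'$ for a uniquely determined $s_j \in S_X$; or (c) $\ell_I(u' s_i) = \ell_I(u')-1$, in which case $u' s_i$ lies automatically in ${}^{X}W$. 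In cases (a) and (c), set $\hpiix(\omega) = \hpiix(\omega')$ and update the carried reduced element from $u'$ to $u' s_i$; in case (b), set $\hpiix(\omega) = \hpiix(\omega') \cdot \sigma_j^\epsilon$ and leave $u'$ unchanged. Properties (i), (ii), (iv) and (ix) are then immediate from the definition.

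The main obstacle is property (iii) --- the statement (attributed to \cite{BlPa2022}) that $\hpiix$ descends to a well-defined set map $\piix : A_I \to A_X$. One must verify compatibility of $\hpiix$ with the two families of defining relations of $A_I$. The free-cancellation relations $\sigma_i^\epsilon \sigma_i^{-\epsilon} \leftrightarrow \varnothing$ are easy: the trichotomy reverses on the second letter exactly, and if both steps fall into case (b) the appended pair $\sigma_j^\epsilon \sigma_j^{-\epsilon}$ is $\equiv_X$-trivial. The braid-relation case is the delicate point: both sides of a braid relation have the same image in $W_I$, so the final carried pair $(v',u')$ coincides, but the sub-words appended along the two scans may differ letter-by-letter. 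One must show these sub-words are $\equiv_X$-equivalent by a finite case analysis depending on whether $\{i,j\} \subseteq X$, $|\{i,j\} \cap X| = 1$, or $\{i,j\} \cap X = \varnothing$, and on the shape of $u'$ at the start of the braid word.

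Once (iii) is in hand, the remaining items follow from the construction. Property (vi) holds because in case (b) the appended letter inherits the sign of the input $\sigma_i^\epsilon$, so positive words yield positive words. Property (vii) holds because any letter from $\Sigma_X \cup \Sigma_X^{-1}$ triggers case (b) with $s_j$ its own generator, keeping $u' = 1$; thus for $\omega$ on $\Sigma_X \cup \Sigma_X^{-1}$ one obtains $\hpiix(\omega\omega') = \omega \cdot \hpiix(\omega')$. Property (viii) is analogous: when $\bbw\omega \in CA_I$, property (iv) forces $u' = 1$ and $\theta_X(\bbw{\hpiix(\omega)}) = 1$ at the end of the scan of $\omega$, so processing $\omega'$ from there reproduces $\hpiix(\omega')$ up to the left factor $\piix(\bbw\omega)$. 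Property (v) is then immediate from (viii). Finally, (x) and (xi) are deduced by iterating (ix) and invoking (iii).
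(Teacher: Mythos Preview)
Your inductive construction is correct and is exactly the definition given in \cite{BlPa2022}; the paper establishes (Proposition~\ref{prp:lien:paris}) that it coincides with its own construction, which is phrased globally via the reflection sequence $\bbw{R}(w)$ of $w=\theta_I^*(\omega)$: one retains precisely those entries of $\bbw{R}(w)$ lying in $W_X$, and the resulting subsequence is $\bbw{R}(w_X)$ for a canonical word $w_X$ on $S_X$ (Proposition~\ref{prop:crucial}). Items (i), (ii), (iv), (vi)--(ix) go through from either viewpoint essentially as you describe.

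The genuine gap is your handling of the braid-relation case in~(iii). The case split you propose, on $|\{i,j\}\cap X|$, is the wrong invariant: the letters output while scanning a braid word depend on the conjugates $u' s_i (u')^{-1}$, not on $s_i$ itself, so what governs the situation is the intersection $u'\, W_{\{i,j\}}\, (u')^{-1}\cap W_X$. The paper's argument rests on two Coxeter facts you have not invoked. First (Corollary~\ref{cor:base2}, an instance of the Solomon--Tits result Proposition~\ref{prop:base2}): that intersection is either trivial, or contains exactly one nontrivial element, or equals all of $u'\, W_{\{i,j\}}\, (u')^{-1}$, and in the last case it is $W_{\{i',j'\}}$ for some $i',j'\in X$ with $m(i',j')=m(i,j)$ (Remark~\ref{rem:base2}). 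Second (Lemma~\ref{lem:base2}): the reflection sequences of the two sides of a braid relation are reversals of one another. Together these yield that both scans output nothing, or both output the same single letter, or both output length-$m$ braid words on $\{i',j'\}$ related by the braid relation of $A_X$. Your split does not capture this: for instance $\{i,j\}\cap X=\emptyset$ does not force empty output (take $u'$ conjugating $W_{\{i,j\}}$ into $W_X$), and nothing in your outline secures the crucial equality $m(i',j')=m(i,j)$, without which the two output words need not be $\equiv_X$-equivalent.

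A smaller point: (x) and (xi) do not follow from (ix) and (iii) by iteration. From (ix), $\pi^*_{I,Y}\circ\pi^*_{I,X}$ becomes $\pi^*_{X,X\cap Y}\circ\pi^*_{I,X}$, but identifying this with $\pi^*_{I,X\cap Y}$ is precisely (xi); conversely (xi) reduces via (ix) to a special case of (x). The paper breaks this circle by proving (x) directly from the reflection-sequence description: the entries of $\bbw{R}(w)$ surviving $\pi^*_{I,X}$ and then $\pi^*_{I,Y}$ are exactly those lying in $W_X\cap W_Y=W_{X\cap Y}$, with matching positions and hence matching exponents.
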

Note that,  since at this stage we do not identify $A_X$ with its image~$\iota_X(A_X)$ in $A_I$,  in Point (vii) of the above proposition we distinguish $\bbw{\omega}$ and   $\piix(\bbw{\omega})$ (see Corollary~\ref{cor:main} below).   Some points in the above proposition, such as Point~(vi), are not explicitely stated in~\cite{BlPa2022},  but  can be deduced from the  definition of~$\hat{\pi}_X$ given in~\cite{BlPa2022}.   As a consequence of Proposition~\ref{main:prop2},  we will deduce : 
\begin{prp} \label{main:prop1} Let $X$ be a subset of $I$.  
\begin{enumerate}
\item \cite{VdL}  Let $\omega,\omega'$ be in $(\Sigma_X\cup \Sigma_X^{-1})^*$,  if  $ \omega\equiv_I  \omega'$,  then  $ \omega\equiv_X  \omega'$.  In particular,  $\iota_X$ is into.  So one can identify $A_X$ with its image in $A_I$.
\item \cite{VdL} If $Y$ is another subset of $I$ then $A_X\cap A_Y = A_{X\cap Y}$ in $A_I$.  
\item We have $A_X^+ = A_I^+\cap A_X$.
\item \cite{ChP2014} The subgroup~$A_X$ is convex : if $\bbw{\omega}$ is in $A_X$,  any of its representative words of minimal length on $\Sigma_I\cup \Sigma_I^{-1}$ is actually a word on $\Sigma_X\cup \Sigma_X^{-1}$. 
\end{enumerate}
\end{prp}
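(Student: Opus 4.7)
The plan is to derive all four parts from Proposition~\ref{main:prop2}, which has been set up for exactly this purpose. The key preliminary observation is that parts (i) and (iii) of that proposition together yield $\piix \circ \iota_X = \mathrm{id}_{A_X}$: for any word $\omega$ on $\Sigma_X \cup \Sigma_X^{-1}$, point (i) gives $\hpiix(\omega) = \omega$, and point (iii) then tells us that the class of $\omega$ in $A_X$ depends only on $\iota_X(\bbw{\omega}) \in A_I$, so it equals $\piix(\iota_X(\bbw{\omega}))$.

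I would first prove part (i). Let $\omega, \omega' \in (\Sigma_X \cup \Sigma_X^{-1})^*$ with $\omega \equiv_I \omega'$. Applying the well-defined set map $\piix : A_I \to A_X$ to the equality $\bbw{\omega} = \bbw{\omega}'$ in $A_I$ and using the observation above, one obtains $\bbw{\omega} = \piix(\iota_X(\bbw{\omega})) = \piix(\iota_X(\bbw{\omega}')) = \bbw{\omega}'$ in $A_X$, i.e., $\omega \equiv_X \omega'$. This in particular shows that $\iota_X$ is injective, and from this point on one identifies $A_X$ with $\iota_X(A_X) \subseteq A_I$.

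Part (iv) is the central step, where the length estimate in Proposition~\ref{main:prop2}(i) comes into play. Let $\bbw{g} \in A_X$ and let $v \in (\Sigma_I \cup \Sigma_I^{-1})^*$ be a representative of minimal length. By Proposition~\ref{main:prop2}(i), $\ell_X(\hpiix(v)) \leq \ell_I(v)$, with equality if and only if $v$ lies in $(\Sigma_X \cup \Sigma_X^{-1})^*$. On the other hand, by the key observation $\hpiix(v)$ represents $\piix(\bbw{v}) = \bbw{g}$ in $A_X$; and any representative of $\bbw{g}$ on $\Sigma_X \cup \Sigma_X^{-1}$ is also a representative on $\Sigma_I \cup \Sigma_I^{-1}$, so it has length at least $\ell_I(v)$. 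Hence $\ell_X(\hpiix(v)) \geq \ell_I(v)$, equality holds, and $v$ is itself a word on $\Sigma_X \cup \Sigma_X^{-1}$.

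Parts (ii) and (iii) follow immediately. For (ii), $A_{X \cap Y} \subseteq A_X \cap A_Y$ is trivial; conversely, any $\bbw{g} \in A_X \cap A_Y$ admits a minimal-length representative on $\Sigma_I \cup \Sigma_I^{-1}$ which, by (iv) applied to $X$ and then to $Y$, lies in $(\Sigma_{X \cap Y} \cup \Sigma_{X \cap Y}^{-1})^*$. For (iii), only the inclusion $A_I^+ \cap A_X \subseteq A_X^+$ is nontrivial: if $\bbw{g} \in A_I^+ \cap A_X$, then $\bbw{g} = \piix(\bbw{g}) \in \piix(A_I^+) = A_X^+$ by Proposition~\ref{main:prop2}(vi). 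I do not expect any serious obstacle here; the one point requiring mild care is the bookkeeping in part (i), where one cannot yet identify $A_X$ with its image in $A_I$ and must phrase the argument at the level of representative words.
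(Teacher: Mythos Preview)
Your proof is correct, and for parts (i), (iii), and (iv) it matches the paper's argument essentially verbatim. The only genuine difference is in part~(ii): you first establish convexity~(iv) and then deduce~(ii) by applying~(iv) with respect to both $X$ and $Y$ to a single minimal-length representative, forcing it to lie in $(\Sigma_{X\cap Y}\cup\Sigma_{X\cap Y}^{-1})^*$. The paper instead proves~(ii) before~(iv), using Proposition~\ref{main:prop2}(ix) (if $\bbw{\omega}\in A_Y$ then $\piix(\bbw{\omega})\in A_{X\cap Y}$) together with the retraction property. Both routes are short; yours avoids invoking point~(ix) altogether, while the paper's keeps~(ii) logically independent of the length comparison underlying~(iv).
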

The statements in Proposition~\ref{main:prop1}  are already known.  Howewer previous proofs were long and based on topological arguments.  The interest of the present proof is that it is is short, only uses algebraical arguments and is only based on elementary prerequisites.  Indeed we only  refer to  the first pages of  \cite[Chapter 4]{Bou1968},  were Coxeter groups are defined,  and to a result of \cite{Tit1961},  which is a easy consequence of the same pages in \cite[Chapter 4]{Bou1968}.   With Proposition~\ref{main:prop1}(i)  at hand,  we deduce from Proposition~\ref{main:prop2} that 
\begin{cor}\label{cor:main}
Let $X\subseteq I$ and  identify $A_X$ with its image in $A_I$.
\begin{enumerate}
\item \cite{ChP2014}  The morphism $\piix: A_I\to A_X $ is a retraction.  
\item For any $\bbw{\omega}$ in $A_X$  and any $\bbw{\omega}'$ in $A_I$,  one has  $\piix(\bbw{\omega}\bbw{\omega}') =\bbw{\omega}\,\piix(\bbw{\omega}')$.
\item Let $X, Y\subseteq I$.  For any $\bbw{\omega}$ in $A_I$,  $\pi_{I,Y}(\pi_{I,X}(\bbw{\omega}))
 = \pi_{I,X}(\pi_{I,Y}(\bbw{\omega})) = \pi_{I,X\cap Y}(\bbw{\omega})$.
\item Let $X\subseteq Y\subseteq I$.  
\begin{enumerate} \item The restriction of  $\pi_{I,X}$ to $A_Y$ is $\pi_{Y,X}$;  \item For any $\bbw{\omega}$ in $A_I$,   $\pi_{Y,X}(\pi_{I,Y}(\bbw{\omega}))=\pi_{I,X}(\bbw{\omega})$.
\end{enumerate}
\end{enumerate}
\end{cor}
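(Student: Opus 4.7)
The plan is to transport each item from the corresponding word-level statement in Proposition~\ref{main:prop2} to a statement about group elements, using the identification $A_X \subseteq A_I$ provided by Proposition~\ref{main:prop1}(i). The bridge throughout is Proposition~\ref{main:prop2}(iii): for any $\bbw{\omega} \in A_I$ and any word $\omega$ representing it, $\hpiix(\omega)$ represents $\piix(\bbw{\omega})$, so equalities of words immediately translate into equalities of group elements.

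For~(i), I would take $\bbw{\omega} \in A_X$, represented by a word $\omega$ on $\Sigma_X \cup \Sigma_X^{-1}$, and invoke Proposition~\ref{main:prop2}(i) which gives $\hpiix(\omega) = \omega$; under the identification of $A_X$ with its image in $A_I$, this reads $\piix(\bbw{\omega}) = \bbw{\omega}$, so $\piix$ is a retraction. Item~(ii) is then a direct combination of Proposition~\ref{main:prop2}(vii) with~(i), since the word $\omega$ representing $\bbw{\omega}\in A_X$ lies in $\Sigma_X\cup\Sigma_X^{-1}$. For~(iii), I pick any word $\omega$ representing $\bbw{\omega}$; then $\pi^*_{I,X}(\omega)$ represents $\pi_{I,X}(\bbw{\omega})$, viewed as an element of $A_I$ via the identification, so $\pi^*_{I,Y}(\pi^*_{I,X}(\omega))$ represents $\pi_{I,Y}(\pi_{I,X}(\bbw{\omega}))$. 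Proposition~\ref{main:prop2}(x) equates this word with $\pi^*_{I,X\cap Y}(\omega)$, which in turn represents $\pi_{I,X\cap Y}(\bbw{\omega})$; a symmetric argument yields the other equality.

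For~(iv)(a), if $\bbw{\omega} \in A_Y$ is represented by some $\omega$ on $\Sigma_Y \cup \Sigma_Y^{-1}$, then Proposition~\ref{main:prop2}(ix), using $X \cap Y = X$, gives $\hpiix(\omega) = \pi^*_{Y,X}(\omega)$, whence $\pi_{I,X}(\bbw{\omega}) = \pi_{Y,X}(\bbw{\omega})$. Item~(iv)(b) follows either by applying Proposition~\ref{main:prop2}(xi) directly at the word level, or by applying~(iv)(a) to the element $\pi_{I,Y}(\bbw{\omega}) \in A_Y$ together with the identity $\pi_{I,X}(\pi_{I,Y}(\bbw{\omega})) = \pi_{I,X\cap Y}(\bbw{\omega}) = \pi_{I,X}(\bbw{\omega})$ from~(iii).

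The main subtlety is bookkeeping: for the nested compositions $\pi_{I,Y}\circ\pi_{I,X}$ and $\pi_{Y,X}\circ\pi_{I,Y}$ to even make sense, I have to interpret $\piix(\bbw{\omega})\in A_X$ simultaneously as an element of $A_I$ (resp.\ $A_Y$) via the identification of Proposition~\ref{main:prop1}(i), and check that this does not cause a mismatch between the word-level statements of Proposition~\ref{main:prop2} and their group-level readings. Once this identification is invoked uniformly, each item is essentially a reading-off of the corresponding word-level statement, and no further combinatorial work is required.
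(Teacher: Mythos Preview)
Your proposal is correct and matches the paper's approach exactly: the paper does not give an explicit proof of Corollary~\ref{cor:main}, but introduces it with ``With Proposition~\ref{main:prop1}(i) at hand, we deduce from Proposition~\ref{main:prop2} that\ldots'', which is precisely the transport-from-words-to-elements argument you spell out. Your bookkeeping remark about the identification $A_X\subseteq A_I$ is the only thing that needs care, and you handle it correctly.
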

In  Point~(iii) above,  the equalities can be written  as $\pi_{X,X\cap Y}(\pi_{I,X}(\bbw{\omega}))
 = \pi_{Y,Y\cap X}(\pi_{I,Y}(\bbw{\omega})) = \pi_{I,X\cap Y}(\bbw{\omega})$ by (iv)(a). 
 
 If $X$ is a subset of $I$,  then the subgroups of $A_I$ and $W_I$ generated by $\Sigma_X$ and $S_X$,  respectively,  are called \emph{ standard parabolic subgroups}.  A \emph{parabolic subgroup} of $A_I$ is a subgroup that is conjugated to one of its standard parabolic subgroups.  In the framework of Artin-Tits groups,  a main open conjecture states that the family of parabolic subgroups of an Artin-Tits group is closed under intersection.  This conjecture can be formulated as it follows: 
\begin{conj} \label{conj:conj1} Let  $A_I$ be an Artin-Tits group,  $X,Y$  be  in $I$ and $\bbw{\omega}$ be  in $A_I$.  Then,  $(\bbw{\omega}A_{Y} \bbw{\omega}^{-1})\cap A_X$ is a parabolic subgroup of $A_I$.     
\end{conj}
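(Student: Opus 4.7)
The plan is to use the retraction $\piix : A_I \to A_X$ from Corollary~\ref{cor:main}(i) as the central tool. The crucial point is that $\bbw{g} \in A_X$ if and only if $\piix(\bbw{g}) = \bbw{g}$, so $(\bbw{\omega} A_Y \bbw{\omega}^{-1}) \cap A_X$ is precisely the set of fixed points of $\piix$ restricted to the conjugated subgroup $\bbw{\omega} A_Y \bbw{\omega}^{-1}$. A first step is a double-coset reduction: using Proposition~\ref{main:prop2}(iv), factor $\bbw{\omega} = \bbw{\alpha}\bbw{\beta}$ with $\bbw{\alpha} = \piix(\bbw{\omega}) \in A_X$ and $\theta_I(\bbw{\beta})$ an $(X,\emptyset)$-reduced element of $W_I$. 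Then
\[
(\bbw{\omega} A_Y \bbw{\omega}^{-1}) \cap A_X \;=\; \bbw{\alpha}\bigl((\bbw{\beta} A_Y \bbw{\beta}^{-1}) \cap A_X\bigr)\bbw{\alpha}^{-1},
\]
so it suffices to prove the conjecture when $\bbw{\omega}$ is $(X,\emptyset)$-reduced. This is very likely the formally weaker but equivalent statement alluded to in the abstract.

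The second step would be an induction on $\ell_I(\theta_I(\bbw{\omega}))$. The base case $\bbw{\omega} = 1$ is nothing but Proposition~\ref{main:prop1}(ii), giving $A_Y \cap A_X = A_{X \cap Y}$, a standard parabolic. For the inductive step, I would aim to establish the following auxiliary statement: if $\bbw{\beta}$ is $(X,\emptyset)$-reduced and $\bbw{\beta} A_Y \bbw{\beta}^{-1} \subseteq A_X$, then $\bbw{\beta} A_Y \bbw{\beta}^{-1}$ is itself standardly parabolic in $A_X$. Granted this, one would use Corollary~\ref{cor:main}(ii)--(iii) to transport the fixed-point analysis of $\piix$ through $\bbw{\beta}$ one generator at a time, peeling off an initial letter $\ssi$ with $i \notin X$ and applying induction to the shorter conjugator.

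The main obstacle is the auxiliary statement: controlling conjugates of parabolics that sit inside another parabolic. In the spherical setting one can produce a canonical conjugator via Garside cycling, which forces such a conjugate to be standard; outside the spherical (and FC) setting, no analogous canonical procedure is known, and this is precisely why Conjecture~\ref{conj:conj1} is open. A realistic target for a direct proof via the tools above is therefore a partial case — for instance, when $\bbw{\omega}$ already lies in a spherical standard parabolic containing $X \cup Y$, or when the pair $(X,Y)$ is such that the $(X,\emptyset)$-reduced representatives of $A_X \backslash A_I / A_Y$ can be enumerated from Proposition~\ref{main:prop2}(ix)--(xi). I would expect the paper's specific case to fall into this kind of scenario, where the combinatorial control afforded by $\hpiix$ is enough to verify the auxiliary statement by hand.
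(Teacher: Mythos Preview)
The statement is a \emph{conjecture}: the paper does not prove it, and your proposal does not either. What the paper actually establishes is (a) Theorem~\ref{main:thm1}, that Conjecture~\ref{conj:conj2} implies Conjecture~\ref{conj:conj1}, and (b) Theorem~\ref{main:thm2}, the special case where $\bbw{\omega}=\KKI(\bbw{w})$ lies in the image of the Tits section. Your speculations about both items miss the mark: the ``weaker conjecture'' is not the $(X,\emptyset)$-reduced case you propose, but the case $Y=X$ with $\bbw{\omega}\in CA_I$; and the ``specific case'' is not a spherical or enumerable-double-coset situation, but the section case $\bbw{\omega}=\KKI(\bbw{w})$, handled via Lemma~\ref{lem:lem22} and Proposition~\ref{prop:lem22bis}.

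There is a genuine gap in your induction scheme. You induct on $\ell_I(\theta_I(\bbw{\omega}))$ and declare the base case to be $\bbw{\omega}=1$, settled by Proposition~\ref{main:prop1}(ii). But the base case of that induction is $\ell_I(\theta_I(\bbw{\omega}))=0$, i.e.\ $\bbw{\omega}\in CA_I$, which is vastly more than $\bbw{\omega}=1$: it is essentially Conjecture~\ref{conj:conj2} (indeed a mild strengthening of it, since you still allow $X\neq Y$). So your ``base case'' is the open problem itself. Ironically, this means your outline is morally the same reduction as the paper's Theorem~\ref{main:thm1}---passing from arbitrary $\bbw{\omega}$ to $\bbw{\omega}\in CA_I$---but you have misidentified which end is the hard one. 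The paper carries out this reduction not by an induction peeling off one letter, but by writing $\theta_I(\bbw{\omega})=\bbw{w_1}\bbw{w_2}\bbw{w'_2}$ with $\bbw{w_2}$ $(X,Y)$-reduced, using Lemma~\ref{lem:lem22} to locate the intersection inside $\piix(\bbw{\omega}\,\bbw{\omega_2}^{-1})A_{X_1}\piix(\bbw{\omega}\,\bbw{\omega_2}^{-1})^{-1}$ (with $\bbw{\omega_2}=\KKI(\bbw{w_2})$), and then invoking Proposition~\ref{prop:lem22bis} to rewrite everything so that the remaining conjugator lies in $CA_I$ and the two parabolics coincide. Your ``auxiliary statement'' (that an $(X,\emptyset)$-reduced conjugate of $A_Y$ contained in $A_X$ is standard in $A_X$) is neither proved nor needed in the paper's argument.
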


By \cite{BlPa2022},   when the conjecture holds,  then  $(\bbw{\omega}A_{Y} \bbw{\omega}^{-1})\cap A_X$ is a parabolic subgroup of  $\bbw{\omega}A_{Y} \bbw{\omega}^{-1}$ and a parabolic subgroup of $A_X$.   Conjecture \ref{conj:conj1} has been proved to hold when one restricts to some particular families such has the family of Artin-Tits groups of  spherical type  \cite{Cum2019},  of FC type~\cite{Mor2021} or of large type~\cite{Cum2020}.  However, the question in the general case remains open.  Our second objective is to prove that Conjecture~\ref{conj:conj1} is equivalent to the following conjecture:
\begin{conj} \label{conj:conj2} Let  $A_I$ be an Artin-Tits group.  Let $X$ be in $I$ and $\bbw{\omega}$ be in $CA_I$.   Then $(\bbw{\omega}A_{X} \bbw{\omega}^{-1})\cap A_X$ is a  parabolic subgroup of $A_I$.        
\end{conj}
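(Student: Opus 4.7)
The plan is to use the retraction $\pi_{I,X} : A_I \to A_X$ provided by Proposition~\ref{main:prop2} and Corollary~\ref{cor:main} to first normalize the conjugator $\bbw{\omega}$, then identify the intersection with a centralizer inside $A_X$, and finally argue that this centralizer is parabolic.

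\textbf{Step 1: Normalize the conjugator.} Given $\bbw{\omega} \in CA_I$, set $\bbw{u} = \pi_{I,X}(\bbw{\omega})$ and $\bbw{v} = \bbw{u}^{-1}\bbw{\omega}$. Applying Proposition~\ref{main:prop2}(iv) with $\theta_I(\bbw{\omega}) = 1$ forces the length formula to collapse, giving $\theta_X(\bbw{u}) = 1$, so $\bbw{u} \in CA_X \subseteq A_X$. Corollary~\ref{cor:main}(ii) then yields $\pi_{I,X}(\bbw{v}) = \bbw{u}^{-1}\pi_{I,X}(\bbw{\omega}) = 1$, and $\bbw{v} \in CA_I$. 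Since $\bbw{u}$ lies in $A_X$ and thus normalizes it,
$$
(\bbw{\omega}A_X\bbw{\omega}^{-1})\cap A_X \;=\; \bbw{u}\bigl((\bbw{v}A_X\bbw{v}^{-1})\cap A_X\bigr)\bbw{u}^{-1},
$$
and conjugation by $\bbw{u} \in A_I$ preserves the class of parabolic subgroups. So I may assume $\pi_{I,X}(\bbw{\omega}) = 1$ throughout.

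\textbf{Step 2: Identify the intersection as a centralizer.} Take $h \in A_X$ with $g := \bbw{\omega}h\bbw{\omega}^{-1} \in A_X$. As $g \in A_X$, Corollary~\ref{cor:main}(i) gives $\pi_{I,X}(g) = g$. On the other hand, Proposition~\ref{main:prop2}(viii) (for left multiplication by $\bbw{\omega} \in CA_I$), Corollary~\ref{cor:main}(ii) (for left multiplication by $h \in A_X$), and Proposition~\ref{main:prop2}(v) (giving $\pi_{I,X}(\bbw{\omega}^{-1}) = \pi_{I,X}(\bbw{\omega})^{-1} = 1$) combine to give
$$
\pi_{I,X}(g) \;=\; \pi_{I,X}(\bbw{\omega})\cdot h \cdot \pi_{I,X}(\bbw{\omega})^{-1} \;=\; h.
$$
Therefore $g = h$, the element $\bbw{\omega}$ centralizes $h$, and one obtains the key identity $(\bbw{\omega}A_X\bbw{\omega}^{-1})\cap A_X = C_{A_X}(\bbw{\omega})$.

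\textbf{Step 3: The centralizer is parabolic.} It remains to show that, for $\bbw{\omega} \in CA_I$ with $\pi_{I,X}(\bbw{\omega}) = 1$, the centralizer $C_{A_X}(\bbw{\omega})$ is a parabolic subgroup of $A_X$ (hence of $A_I$). A natural plan is to induct on a complexity measure of $\bbw{\omega}$ (for instance, $\ell_I(\bbw{\omega})$), peeling off letters along a minimal representative and exploiting the $CA_I$-equivariance of $\pi_{I,X}$ from Proposition~\ref{main:prop2}(viii) to track the centralizer. This is exactly where the difficulty concentrates and is the main obstacle: centralizers in a general Artin--Tits group are not known to be parabolic, and the naive inductive reduction quickly folds into an intersection of two parabolic-or-centralizer subgroups, which is itself an instance of Conjecture~\ref{conj:conj1} inside $A_X$. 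This circularity is precisely why the paper settles for proving the equivalence of Conjectures~\ref{conj:conj1} and~\ref{conj:conj2} and treats Conjecture~\ref{conj:conj2} only in a specific case. An unconditional argument would likely require a genuinely new ingredient — for example, a geometric model realizing $C_{A_X}(\bbw{\omega})$ as the stabilizer of a convex subcomplex, or a combinatorial normal form for colored elements refining the projections $\pi_{I,Y}$ — currently available only in the spherical, FC, large-type, and right-angled cases.
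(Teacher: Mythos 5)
The statement you were asked to prove is Conjecture~\ref{conj:conj2}, and the paper contains no proof of it: it is left open, serving only as the hypothesis of Theorem~\ref{main:thm1} (which shows it implies Conjecture~\ref{conj:conj1}) and being verified only for the special conjugators $\bbw{\omega}=\KKI(\bbw{w})$ in Theorem~\ref{main:thm2}. So there is no ``paper proof'' to match, and your Step~3 is right to concede that the argument cannot be completed; any submission claiming a full proof here would have been wrong.

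That said, the partial reductions you do carry out are correct and worth recording. Step~1 (factoring $\bbw{\omega}=\bbw{u}\bbw{v}$ with $\bbw{u}=\piix(\bbw{\omega})\in CA_X$ and $\piix(\bbw{v})=1$, then conjugating away $\bbw{u}$) is sound: Proposition~\ref{main:prop2}(iv) with $\theta_I(\bbw{\omega})=1$ does force $\theta_X(\piix(\bbw{\omega}))=1$, and Corollary~\ref{cor:main}(ii) gives $\piix(\bbw{v})=1$. Step~2 is essentially a special case of the paper's Lemma~\ref{sec3:lem1}: for $\bbw{\omega}\in CA_I$ with $\piix(\bbw{\omega})=1$ and $h\in A_X$ with $g=\bbw{\omega}h\bbw{\omega}^{-1}\in A_X$, combining Corollary~\ref{cor:main}(i), Proposition~\ref{main:prop2}(v) and (viii), and Corollary~\ref{cor:main}(ii) does yield $g=\piix(g)=h$, hence the identity $(\bbw{\omega}A_X\bbw{\omega}^{-1})\cap A_X=C_{A_X}(\bbw{\omega})$. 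This reformulation as a centralizer of a colored element with trivial projection is consistent with, and slightly sharpens, the way the paper uses Lemma~\ref{sec3:lem1}. The genuine gap is exactly where you locate it: showing that $C_{A_X}(\bbw{\omega})$ is parabolic is the open content of the conjecture, and nothing in Propositions~\ref{main:prop2} and~\ref{main:prop1} or their corollaries supplies it. In short: your write-up is an honest and correct reduction, not a proof, and no proof exists in the paper.
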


Clearly,  Conjecture~\ref{conj:conj1} implies Conjecture~\ref{conj:conj2},  so we address the other implication.   
\begin{thm} \label{main:thm1}  
 Conjecture~\ref{conj:conj2} implies Conjecture~\ref{conj:conj1}. 
 \end{thm}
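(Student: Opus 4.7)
The plan is to reduce the general intersection $(\bbw{\omega}A_Y\bbw{\omega}^{-1})\cap A_X$ of Conjecture~\ref{conj:conj1} to an instance of Conjecture~\ref{conj:conj2} in three successive moves: a double-coset normalization of $\bbw{\omega}$, an application of the ribbon theorem, and a careful identification of the resulting intersection with a colored conjugate of a single standard parabolic.

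First I would use the double-coset decomposition in $W_I$. Let $w=\theta_I(\bbw{\omega})$ and write $w=udv$ with $u\in W_X$, $v\in W_Y$ and $d$ the $(W_X,W_Y)$-reduced representative of $W_XwW_Y$, so $\ell_I(w)=\ell_I(u)+\ell_I(d)+\ell_I(v)$. Lifting $u$, $d$, $v$ to canonical positive braids $\bbw{u}\in A_X^+$, $\bbw{d}\in A_I^+$, $\bbw{v}\in A_Y^+$ yields $\bbw{\omega}=\bbw{u}\bbw{d}\bbw{v}\bbw\gamma$ for some $\bbw\gamma\in CA_I$. Using that $CA_I$ is normal in $A_I$, I rewrite $\bbw{v}\bbw\gamma=\bbw{\gamma}'\bbw{v}$ with $\bbw{\gamma}'\in CA_I$; then $\bbw{v}\in A_Y$ gives $\bbw{v}A_Y\bbw{v}^{-1}=A_Y$, and $\bbw{u}\in A_X$ lets me extract $\bbw{u}$ outside the intersection. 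The upshot is
$(\bbw{\omega}A_Y\bbw{\omega}^{-1})\cap A_X=\bbw{u}\bigl[(\bbw{d}\bbw{\gamma}'A_Y\bbw{\gamma}'^{-1}\bbw{d}^{-1})\cap A_X\bigr]\bbw{u}^{-1}$,
so one reduces to proving the theorem in the case $\bbw\omega=\bbw{d}\bbw\gamma$ with $d=\theta_I(\bbw{d})$ being $(W_X,W_Y)$-reduced and $\bbw\gamma\in CA_I$.

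Second, I apply Kilmoyer's classical formula in $W_I$: with $Z:=X\cap dYd^{-1}\subseteq X$ and $Y':=d^{-1}Zd\cap Y\subseteq Y$, conjugation by $d$ induces a bijection $Y'\to Z$ and $W_X\cap dW_Yd^{-1}=W_Z$. Its Artin-Tits counterpart---the \emph{ribbon} statement $\bbw{d}A_{Y'}\bbw{d}^{-1}=A_Z$, which one derives inside the framework of Proposition~\ref{main:prop2} by following a reduced expression of $d$ through the braid relations---transports $A_{Y'}$ onto $A_Z$ as standard parabolics. Setting $\bbw\delta:=\bbw{d}\bbw\gamma\bbw{d}^{-1}\in CA_I$ (well defined by normality), the intersection rewrites as $(\bbw\delta\,\bbw{d}A_Y\bbw{d}^{-1}\,\bbw\delta^{-1})\cap A_X$, and every element therein has $\theta_I$-image in $W_X\cap dW_Yd^{-1}=W_Z$.

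Third, using the retraction $\pi_{I,Z}$ from Corollary~\ref{cor:main} together with the ribbon, each element of the intersection factors as $\bbw\rho\bbw{c}$ with $\bbw\rho\in A_Z$ and $\bbw{c}\in CA_X$; writing each preimage $\bbw\beta\in A_Y$ as $\bbw\beta_0\bbw{c}_Y$ with $\bbw\beta_0\in A_{Y'}$ and $\bbw{c}_Y\in CA_Y$, and then using properties (vii) and (viii) of Proposition~\ref{main:prop2} to track the colored factors, one identifies the intersection---up to conjugation inside $A_I$---with $(\bbw\delta'A_Z\bbw\delta'^{-1})\cap A_Z$ for some explicit $\bbw\delta'\in CA_I$. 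The latter is parabolic by Conjecture~\ref{conj:conj2}, hence so is the original intersection. The delicate point, and the main obstacle, is precisely this final matching: one must verify that the ``colored overflow'' contributed by an arbitrary $\bbw{c}_Y\in CA_Y$ lands exactly in the subgroup of $CA_X$ that the Conjecture~\ref{conj:conj2}-intersection prescribes, after ribbon conjugation by $\bbw{d}$ and then $\bbw\delta$-conjugation. Steps~1 and~2 are classical reductions; Conjecture~\ref{conj:conj2} is invoked only at this closing identification.
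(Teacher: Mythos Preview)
Your Steps~1 and~2 are essentially the paper's opening reduction: normalize via the $(W_X,W_Y)$ double-coset decomposition, lift to $A_I$ using $\KKI$, and invoke the ribbon $\KKI(\bbw{w_2})A_{Y_1}\KKI(\bbw{w_2})^{-1}=A_{X_1}$ (your $\bbw{d}A_{Y'}\bbw{d}^{-1}=A_Z$). So far so good.

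The gap is entirely in Step~3, and you flag it yourself. What you write there is not an argument but a wish: ``one identifies the intersection \ldots\ with $(\bbw\delta'A_Z\bbw\delta'^{-1})\cap A_Z$.'' Your proposed mechanism---factoring each element of the intersection as $\bbw\rho\bbw{c}$ with $\bbw\rho\in A_Z$, $\bbw{c}\in CA_X$, and each $\bbw\beta\in A_Y$ as $\bbw\beta_0\bbw{c_Y}$---does not by itself produce a \emph{single} colored element $\bbw\delta'$ conjugating $A_Z$ so that the intersection becomes $(\bbw\delta'A_Z\bbw\delta'^{-1})\cap A_Z$. The maps $a\mapsto\bbw\rho$ and $\bbw\beta\mapsto\bbw\beta_0$ are not homomorphisms, and nothing in your sketch pins down the ``colored overflow'' to a fixed conjugator. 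Also note that the retraction you want here is $\piix$, not $\pi_{I,Z}$: you need property~(vii) of Proposition~\ref{main:prop2} for the factor $\bbw{\omega}\bbw{\omega'}\bbw{\omega}^{-1}\in A_X$, and that factor lies in $A_X$, not in $A_Z$ a priori.

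The paper closes this gap with two concrete ingredients you are missing. First, Lemma~\ref{lem:lem22} computes $\piix(\KKI(\bbw{w})A_Y)=\piix(\KKI(\bbw{w}))A_{X_1}$ exactly; applying $\piix$ to the identity $(\bbw{\omega}\bbw{\omega'}\bbw{\omega}^{-1})\,\bbw{\omega}=\bbw{\omega}\bbw{\omega'}$ via (vii)--(viii) then yields the crucial containment
\[
\bbw{\omega}A_Y\bbw{\omega}^{-1}\cap A_X\ \subseteq\ \piix(\bbw{\omega}\,\bbw{\omega_2}^{-1})\,A_{X_1}\,\piix(\bbw{\omega}\,\bbw{\omega_2}^{-1})^{-1},
\]
with $\bbw{\omega_2}=\KKI(\bbw{w})$. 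Since the right-hand side sits inside $A_X$, the containment is an equality after re-intersecting with $\bbw{\omega}A_Y\bbw{\omega}^{-1}$. Second, this does \emph{not} yet have the shape of Conjecture~\ref{conj:conj2}: rewriting via the ribbon gives $\bbw{\omega}\bigl(A_Y\cap \nu A_{Y_1}\nu^{-1}\bigr)\bbw{\omega}^{-1}$ with $\nu\in CA_I$ and $Y_1\subseteq Y$, i.e.\ two \emph{different} standard parabolics. The paper then \emph{iterates} the whole reduction once more with $(X,Y,\bbw{\omega})$ replaced by $(Y,Y_1,\nu)$; now the Coxeter image is trivial, so $\bbw{w_2}=1$, $X_1=Y_1$, and only at this second pass does one land on $A_{Y_1}\cap \mu A_{Y_1}\mu^{-1}$ with $\mu\in CA_I$, which is precisely Conjecture~\ref{conj:conj2}. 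Your one-step jump to $(\bbw\delta'A_Z\bbw\delta'^{-1})\cap A_Z$ hides both the containment lemma and this iteration.
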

 In order to prove Theorem~\ref{main:thm1}   we need to prove Conjecture~\ref{conj:conj1} in the case of specific elements $\bbw{\omega}$ of~$A_I$.  The morphism $\theta_I : A_I\to W_I$ possesses a well-defined and well-known  set section $\KKI : W_I \to A_I$ (see  Section~1).  We  will prove that
 
\begin{thm} \label{main:thm2}   Let  $X,Y$ be in $I$ and $\bbw{w}$ be in $W_I$.  Set $\bbw{\omega} = \KKI(\bbw{w})$.   Then,  there exists $\bbw{w}_1$ in $W_{X}$,  $Y_1\subseteq Y$,  and $X_1 \subseteq X$ so that $(\bbw{\omega}A_{Y} \bbw{\omega}^{-1})\cap A_X  = (\bbw{\omega}A_{Y_1} \bbw{\omega}^{-1}) = \KKI(\bbw{w_1})A_{X_1}\KKI(\bbw{w_1})^{-1}$.      
\end{thm}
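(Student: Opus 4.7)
My plan is to combine Howlett's double-coset decomposition in $W_I$ with Kilmoyer's intersection lemma, and then lift the resulting identities to $A_I$ using the multiplicativity of the section $\KKI$ on reduced factorisations together with the projection maps from Proposition~\ref{main:prop2}.

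First, I write $\bbw{w} = \bbw{w}_1 \bbw{d} \bbw{v}$ with $\bbw{w}_1 \in W_X$, $\bbw{v} \in W_Y$, $\bbw{d}$ the minimum-length $(X,Y)$-reduced representative of the double coset $W_X \bbw{w} W_Y$, and the three lengths additive. Reducedness and multiplicativity of $\KKI$ yield $\bbw{\omega} = \KKI(\bbw{w}_1)\KKI(\bbw{d})\KKI(\bbw{v})$; since $\KKI(\bbw{w}_1) \in A_X$ and $\KKI(\bbw{v}) \in A_Y$ the intersection collapses to
\[
(\bbw{\omega} A_Y \bbw{\omega}^{-1}) \cap A_X = \KKI(\bbw{w}_1)\bigl(\KKI(\bbw{d}) A_Y \KKI(\bbw{d})^{-1} \cap A_X\bigr)\KKI(\bbw{w}_1)^{-1},
\]
so $\bbw{w}_1$ is the element claimed in the statement and the problem reduces to the $(X,Y)$-reduced case. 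Here Kilmoyer's lemma supplies $X_1 \subseteq X$ and $Y_1 \subseteq Y$ with $\bbw{d} W_{Y_1} \bbw{d}^{-1} = W_{X_1}$ and $\bbw{d} W_Y \bbw{d}^{-1} \cap W_X = W_{X_1}$. For each $j \in Y_1$ one writes $\bbw{d} s_j = s_i \bbw{d}$ with $i \in X_1$; the reducedness of $\bbw{d}$ gives $\ell_I(\bbw{d} s_j) = \ell_I(s_i \bbw{d}) = \ell_I(\bbw{d}) + 1$, so the multiplicativity of $\KKI$ yields the braid-level identity $\KKI(\bbw{d})\sigma_j = \sigma_i \KKI(\bbw{d})$. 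Hence conjugation by $\KKI(\bbw{d})$ sends $\Sigma_{Y_1}$ bijectively to $\Sigma_{X_1}$ and $\KKI(\bbw{d}) A_{Y_1} \KKI(\bbw{d})^{-1} = A_{X_1}$, giving one inclusion.

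The main obstacle is the reverse inclusion $\KKI(\bbw{d}) A_Y \KKI(\bbw{d})^{-1} \cap A_X \subseteq A_{X_1}$. For $a \in A_Y$ with $\KKI(\bbw{d}) a \KKI(\bbw{d})^{-1} \in A_X$, applying $\theta_I$ and Kilmoyer in $W_I$ already forces $\theta_Y(a) \in W_{Y_1}$. Splitting $a = \KKI(\theta_Y(a))\,c$ with $c \in CA_Y$ and absorbing the first factor by the previous step, the problem reduces to: if $c \in CA_Y$ and $\KKI(\bbw{d}) c \KKI(\bbw{d})^{-1} \in CA_X$ then $c \in CA_{Y_1}$. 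I would attack this using the homomorphism $\pi_{I,Y_1} : CA_I \to CA_{Y_1}$ from Proposition~\ref{main:prop2}(v) combined with Proposition~\ref{main:prop2}(viii) and the braid-level identity above, showing that conjugation by $\KKI(\bbw{d})$ intertwines $\pi_{I,Y_1}$ with $\pi_{I,X_1}$ on coloured elements, so that $c = \pi_{I,Y_1}(c) \in CA_{Y_1}$.

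Finally, to obtain the middle equality $\bbw{\omega} A_{Y_1} \bbw{\omega}^{-1} = \KKI(\bbw{w}_1) A_{X_1} \KKI(\bbw{w}_1)^{-1}$ one must check that $\KKI(\bbw{v})$ normalises $A_{Y_1}$, which should follow from the canonical Howlett choice of $\bbw{v}$ as a distinguished coset representative of $W_{Y_1} \setminus W_Y$ together with the length-additive lift supplied by $\KKI$. I expect the coloured descent in the previous paragraph to be the most delicate step of the whole argument; the rest is bookkeeping with the multiplicativity of $\KKI$ and the projection identities of Proposition~\ref{main:prop2}.
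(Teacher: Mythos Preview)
Your reduction to the $(X,Y)$-reduced case via the double-coset decomposition and the lift $\KKI(\bbw{w})=\KKI(\bbw{w}_1)\KKI(\bbw{d})\KKI(\bbw{v})$ is exactly what the paper does (Proposition~\ref{prop:lem22bis}), and your argument that $\KKI(\bbw{d})A_{Y_1}\KKI(\bbw{d})^{-1}=A_{X_1}$ from the letter-by-letter identities $\KKI(\bbw{d})\sigma_j=\sigma_i\KKI(\bbw{d})$ is also the same.

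The gap is in the ``hard'' inclusion. Your plan splits $a=\KKI(\theta_Y(a))\,c$ and then hopes that an intertwining of $\pi_{I,Y_1}$ and $\pi_{I,X_1}$ under conjugation by $\KKI(\bbw{d})$ forces $c=\pi_{I,Y_1}(c)$. But even granting the intertwining, it only gives $\pi_{I,X_1}(c')=\KKI(\bbw{d})\,\pi_{I,Y_1}(c)\,\KKI(\bbw{d})^{-1}$ for $c'=\KKI(\bbw{d})c\KKI(\bbw{d})^{-1}$; concluding $c=\pi_{I,Y_1}(c)$ is then equivalent to $c'=\pi_{I,X_1}(c')$, i.e.\ $c'\in A_{X_1}$, which is precisely what you are trying to prove. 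The paper bypasses this entirely: with $b:=\KKI(\bbw{d})a\KKI(\bbw{d})^{-1}\in A_X$ one applies $\pi_{I,X}$ (not $\pi_{I,Y_1}$) to the single identity $b\,\KKI(\bbw{d})=\KKI(\bbw{d})\,a$. The left side equals $b\cdot\pi_{I,X}(\KKI(\bbw{d}))=b$ by Corollary~\ref{cor:main}(ii) and Proposition~\ref{main:prop2}(iv) (since $\bbw{d}$ is $(X,\emptyset)$-reduced), while the right side lies in $A_{X_1}$ by Lemma~\ref{lem:lem22}. Hence $b\in A_{X_1}$ directly, with no coloured splitting and no intertwining needed.

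Your final paragraph also contains a genuine error: a distinguished $(Y_1,\emptyset)$-reduced representative $\bbw{v}\in W_Y$ need \emph{not} normalise $W_{Y_1}$, hence $\KKI(\bbw{v})$ need not normalise $A_{Y_1}$. Already in type $A_2$ with $Y=\{1,2\}$, $Y_1=\{1\}$ and $\bbw{v}=s_2$ one has $s_2s_1s_2\neq s_1$. The paper does not claim this normalisation; Proposition~\ref{prop:lem22bis} records instead $\bbw{\omega}^{-1}A_X\bbw{\omega}\cap A_Y={\bbw{\omega'_2}}^{-1}A_{Y_1}\bbw{\omega'_2}$, and the passage to the middle equality of Theorem~\ref{main:thm2} requires adjusting the choice of $Y_1$ accordingly rather than a normalisation statement.
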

Section~$1$ is devoted to the necessary backgrounds on Coxeter  groups.    In Section~2, we turn to the proof of Propositions~\ref{main:prop2}  and~\ref{main:prop1}.  We also prove that the retraction~$\piix$ is the same as the retraction~$\pi_X$ defined in~\cite{BlPa2022}.   Finally, in Section~3 we prove Theorems~\ref{main:thm1} and~\ref{main:thm2}.

\section{Background on Coxeter groups}   
We start with the elementary properties on Coxeter groups that we shall need.  They  can all be found in \cite[chap.  IV,\S 1]{Bou1968},  except Proposition~\ref{prop:base2} that can be found in \cite{Tit1961} and which is a direct consequence of  Proposition~\ref{prop:dbcoset} below.

\begin{df} \label{def:reflecset} Let $w = s_{i_1}\cdots s_{i_k}$ be a word on $S$.       
\begin{enumerate}
\item By $r_j(w)$,  or $r_i$ when no confusion is possible,  we denote the word $ s_{i_1}s_{i_2}\cdots s_{i_{j-1}}s_{i_j}s_{i_{j-1}}\cdots s_{i_2}s_{i_1}$.  We set $R(w) = (r_1(w), r_2(w),\ldots, r_k(w))$.  
\item By  $\boldsymbol{R}(w)$ we denote the associated sequence $(\boldsymbol{r_1}(w), \boldsymbol{r_2}(w),\ldots, \boldsymbol{r_k}(w))$ of elements of $W_I$.
\item By $\boldsymbol{N}(w)$ we denote  the set of elements of $W_I$ that appear an odd number of times  in $\boldsymbol{R}(w)$.  
\end{enumerate}
\end{df}
\begin{ex}\label{exe:cox1} Consider Example~\ref{exe:base:1}  and  the word $w  = s_as_bs_cs_as_cs_b$.  Then  $\boldsymbol{N}(w) =  \{ \boldsymbol{s_b} ; \boldsymbol{s_as_bs_a}\}$  because  $R(s) = \{s_a ; s_as_bs_a; s_as_bs_cs_bs_a; s_as_bs_cs_as_cs_bs_a; s_as_bs_cs_as_cs_as_cs_bs_a; s_as_bs_cs_as_cs_bs_cs_as_cs_bs_a  \}$ with  $ \bbw{s_as_bs_cs_as_cs_bs_cs_as_cs_bs_a}  = \boldsymbol{s_a}$ and $\boldsymbol{s_as_bs_cs_as_cs_as_cs_bs_a} = \boldsymbol{s_as_bs_cs_bs_a}$,  and $ \boldsymbol{s_as_bs_cs_as_cs_bs_a}  = \bbw{s_b}$.
\end{ex}
\begin{rmq}\label{rem:conca} It follows from  Definition~\ref{def:reflecset} that for any two words $w,w'$ on $S$,  the sequence $\bbw{R}(ww')$ is the concatenation of the two sequences~$\bbw{R}(ww')$ and $\bbw{w}\,\bbw{R}(w')\,\bbw{w}^{-1}$.  
\end{rmq}

\begin{prp} \cite[chap.  IV \S 1]{Bou1968} \label{prop:fourretout1}
Let $w$ and $w'$ be two words on $S_I$. 
\begin{enumerate}
\item  If  $w$ and $w'$  represent the same element in $W$,  then $\boldsymbol{N}(w) = \boldsymbol{N}(w')$. 
\item  The word  $w$  is reduced  if and only if  the cardinality of $\boldsymbol{N}(w)$ is equal to the length of $w$,  in other words when all the elements of $\boldsymbol{R}(w)$ are distinct. 
\end{enumerate}
\end{prp}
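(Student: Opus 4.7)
The plan is to derive both statements together by constructing an action of $W_I$ on $T\times\{\pm 1\}$, where $T\subseteq W_I$ denotes the set of \emph{reflections}, i.e., the conjugacy closure of $S_I$, and by identifying $\boldsymbol{N}(w)$ with the set of $t\in T$ whose sign is flipped by $\bbw{w}$.

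For each $s\in S_I$, I define an involution $\eta_s$ of $T\times\{\pm 1\}$ by $\eta_s(t,\epsilon)=(sts,-\epsilon)$ when $t=s$ and $\eta_s(t,\epsilon)=(sts,\epsilon)$ otherwise; the equivalence $sts=s\iff t=s$ shows that $\eta_s^2$ is the identity. The crucial step is to verify that the $\eta_s$ satisfy the braid relations of $W_I$. Since each relation involves only two generators, the verification reduces to a check inside the dihedral subgroup $\langle s,s'\rangle$ of order $2m(s,s')$. I expect this dihedral verification to be the main technical obstacle; it is handled by partitioning $T$ into reflections lying in $\langle s,s'\rangle$ and reflections lying outside, and by tracking the orbit structure separately on each piece. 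On the $m(s,s')$ reflections of $\langle s,s'\rangle$, a direct enumeration shows that the two alternating products of length $m(s,s')$ induce the same permutation and the same pattern of sign flips; outside, the two sides produce the identical conjugation and no sign flip at all.

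Once the braid relations are established, the assignment $s\mapsto\eta_s$ extends to a well-defined action of $W_I$ on $T\times\{\pm 1\}$. A straightforward induction on the length of a word $w$, using Remark~\ref{rem:conca} to handle the one-letter extension, then yields
\[ \bbw{w}\cdot(t,+1)=\bigl(\bbw{w}\,t\,\bbw{w}^{-1},\,(-1)^{n(w,t)}\bigr), \qquad n(w,t):=\#\{j\mid\bbw{r_j(w)}=t\}. \]
Hence $\boldsymbol{N}(w)=\{t\in T\mid \bbw{w}\cdot(t,+1)=(\bbw{w}t\bbw{w}^{-1},-1)\}$ depends only on $\bbw{w}\in W_I$, which gives~(i).

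For~(ii), the inequality $|\boldsymbol{N}(w)|\le\ell_I(w)$ is immediate, with equality exactly when the entries of $\bbw{R}(w)$ are pairwise distinct. If $\bbw{r_i(w)}=\bbw{r_j(w)}$ for some $i<j$, a short calculation using $s^2=1$ gives $\bbw{r_\ell(w)}\,\bbw{w}=\bbw{w_{\hat\ell}}$ for every $\ell$, where $w_{\hat\ell}$ denotes $w$ with its $\ell$-th letter removed; applying this at $\ell=i$ and $\ell=j$ and using $\bbw{r_i(w)}=\bbw{r_j(w)}$ yields $\bbw{w}=\bbw{w_{\hat{i},\hat{j}}}$, a word of length $\ell_I(w)-2$, so $w$ is not reduced. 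Thus every reduced word achieves the equality; conversely, if $w$ is not reduced, taking a reduced $v$ with $\bbw{v}=\bbw{w}$ produces $|\boldsymbol{N}(w)|=|\boldsymbol{N}(v)|=\ell_I(v)<\ell_I(w)$ by~(i), completing~(ii).
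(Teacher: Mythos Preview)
The paper does not give its own proof of this proposition; it is quoted from \cite[chap.~IV, \S 1]{Bou1968} as background. Your argument is exactly the classical Bourbaki proof: build an action of $W_I$ on $T\times\{\pm1\}$ via the involutions $\eta_s$, reduce the braid-relation check to the dihedral subgroup $\langle s,s'\rangle$, and read $\boldsymbol N(w)$ off the sign coordinate. So there is nothing to compare against, and your approach is the standard one.

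One slip to correct. The induction you invoke via Remark~\ref{rem:conca} appends a letter on the \emph{right} of $w$, and this matches a \emph{right} action $(t,\epsilon)\cdot\bbw{s}:=\eta_s(t,\epsilon)$; the inductive step then gives
\[
(t,+1)\cdot\bbw{w}=\bigl(\bbw{w}^{-1}t\,\bbw{w},\ (-1)^{n(w,t)}\bigr),
\]
not the left-action formula you displayed. (For the genuine left action $\bbw{w}\cdot x=\eta_{s_{i_1}}\!\cdots\eta_{s_{i_k}}(x)$, a two-letter check with $m(s_1,s_2)=3$ and $t=s_1$ already falsifies your exponent.) Either version shows that $\boldsymbol N(w)$ depends only on $\bbw w$, so~(i) is unaffected.

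In~(ii), the deletion argument is correct, but ``applying this at $\ell=i$ and $\ell=j$'' needs an order: apply the identity $\bbw{r_\ell(v)}\,\bbw v=\bbw{v_{\hat\ell}}$ first with $v=w$, $\ell=j$, and then with $v=w_{\hat j}$, $\ell=i$; since $i<j$ the prefix of length $i$ is unchanged, so $\bbw{r_i(w_{\hat j})}=\bbw{r_i(w)}=\bbw{r_j(w)}$, and one obtains $\bbw w=\bbw{w_{\hat i,\hat j}}$ as claimed.
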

\begin{ex} Consider Example~\ref{exe:cox1}.  
 Since $\boldsymbol{N}(w) =  \{ \boldsymbol{s_b} ; \boldsymbol{s_as_bs_a}\}$,  the length on $S$ of $\bbw{w}$ is $2$.  Indeed,  $\bbw{w}  = \bbw{s_as_bs_cs_as_cs_b} = \bbw{s_bs_a}$. 
\end{ex}
\begin{prp}  \cite[chap.  IV,\S 1]{Bou1968}  \label{prop:fourretout2}
Let $w$ and $w'$ be two words on $S_I$. 
\begin{enumerate}
\item  Let $s_{i_0}$ be in $S_I$ and write $w =  s_{i_1}\cdots s_{i_k}$.  If  $w$ is a reduced word,  but~$s_{i_0}w$ is not,  then there exists $j$ so that  $s_{i_1}\cdots s_{i_k} \equiv_I s_{i_0}s_{i_1}\cdots s_{i_{j-1}}s_{i_{j+1}}  \cdots s_{i_k}$.
\item  If $w$ and $w'$ are reduced with  $w \equiv_I w'$,  then   the word $w$ can be transformed into  the word $w'$ by using  the braid relations of the presentation of $W_I$,  only.
\item Let $w$ be a word on $S_I$  and $s,t$ be in $S_I$ so that both words $sw$ and $wt$ are reduced,  but the word $swt$ is not.  Then $sw \equiv_I   wt$.  In other words $\boldsymbol{sw} = \boldsymbol{wt}$ in $W_I$.
\end{enumerate}
\end{prp}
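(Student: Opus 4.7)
The plan is to derive all three statements from Proposition~\ref{prop:fourretout1}, with the reflection sequence $\boldsymbol{R}$ playing the central role. Part~(i), the Strong Exchange Condition, is the technical core; parts~(iii) then follows in one step and~(ii) by induction on length.

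For part~(i), write $w=s_{i_1}\cdots s_{i_k}$. By Remark~\ref{rem:conca}, the sequence $\boldsymbol{R}(s_{i_0}w)$ is the concatenation of $(\boldsymbol{s_{i_0}})$ with $(\boldsymbol{s_{i_0}}\,\boldsymbol{r_l(w)}\,\boldsymbol{s_{i_0}})_{1\le l\le k}$. Since $s_{i_0}w$ is not reduced, Proposition~\ref{prop:fourretout1}(ii) forces a coincidence among these $k+1$ entries. A coincidence between two entries with indices $l,l'\ge 1$ would give $\boldsymbol{r_l(w)}=\boldsymbol{r_{l'}(w)}$ and contradict the reducedness of $w$; hence $\boldsymbol{s_{i_0}}=\boldsymbol{s_{i_0}}\,\boldsymbol{r_j(w)}\,\boldsymbol{s_{i_0}}$ for some $j\in\{1,\dots,k\}$, i.e.\ $\boldsymbol{r_j(w)}=\boldsymbol{s_{i_0}}$. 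Rewriting this as $\boldsymbol{s_{i_0}s_{i_1}\cdots s_{i_{j-1}}}=\boldsymbol{s_{i_1}\cdots s_{i_j}}$ and right-multiplying by $\boldsymbol{s_{i_{j+1}}\cdots s_{i_k}}$ yields the desired exchange identity.

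For part~(iii), apply~(i) to the reduced word $wt$ with left factor $s$: some letter of $s\cdot wt$ must be deletable to produce an equivalent word of length $\ell_I(\boldsymbol{wt})$. If that letter lay inside the $w$-part of $wt$, the resulting equality would rewrite $sw$ as a strictly shorter word and contradict the reducedness of $sw$; hence the deleted letter is $t$, and the exchange identity becomes $\boldsymbol{wt}=\boldsymbol{sw}$.

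For part~(ii), proceed by induction on $k=\ell_I(\boldsymbol{w})$, with trivial base case. Let $s$ be the first letter of $w'$. If $s$ is also the first letter of $w$, apply the induction hypothesis to the tails. Otherwise $sw$ is not reduced, and~(i) supplies $j$ with $\boldsymbol{w}=\boldsymbol{s\,s_{i_1}\cdots\widehat{s_{i_j}}\cdots s_{i_k}}$; then $s_{i_1}\cdots s_{i_j}$ and $s\,s_{i_1}\cdots s_{i_{j-1}}$ are two reduced words of length $j$ representing the same element. When $j<k$, the induction hypothesis makes them braid-equivalent, and substituting one for the other in $w$ connects $w$ by braid moves to a reduced word starting with $s$, reducing to the first case. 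The main obstacle is the boundary case $j=k$, where the two reduced words have the same length as $w$ and the naive induction does not shorten; one closes it using the symmetric right-sided analogue of~(i) (derived by applying~(i) to the reversed word, which preserves reducedness) in combination with~(iii), so that shortening can be performed by right-multiplication. Once this bookkeeping is in place, the induction completes routinely.
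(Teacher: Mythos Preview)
The paper does not prove this proposition; it is quoted from \cite[chap.~IV, \S 1]{Bou1968} as background, so there is no proof in the paper to compare against and your attempt must stand on its own.

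Your arguments for~(i) and~(iii) are correct. For~(i), the use of Proposition~\ref{prop:fourretout1}(ii) together with Remark~\ref{rem:conca} to force a coincidence between the first entry of $\boldsymbol{R}(s_{i_0}w)$ and some later one is exactly right. For~(iii), applying~(i) to the reduced word $wt$ and ruling out deletion inside $w$ by the reducedness of $sw$ is clean and complete.

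Part~(ii), however, has a genuine gap. The case $j=k$ is not bookkeeping: it is the crux of Matsumoto's theorem, and your proposed fix (right-sided exchange plus~(iii)) does not close it. In the $j=k$ situation one has $w=s_{i_1}\cdots s_{i_k}$ and $w''=s\,s_{i_1}\cdots s_{i_{k-1}}$, and one needs $w$ and $w''$ to be braid-equivalent. Applying the right-sided exchange to $w$ with respect to the last letter $s_{i_{k-1}}$ of $w''$ can again land in its own boundary case (deletion at position~$1$). In the dihedral group $W_{\{s_{i_1},s\}}$ with $m=m(s_{i_1},s)$ and $\boldsymbol{w}$ the longest element, this is exactly what happens: $w=s_{i_1}s\,s_{i_1}\cdots$ and $w''=s\,s_{i_1}s\cdots$ ($m$ letters each), the left exchange gives $j=k$, the right exchange gives deletion at position~$1$, and~(iii) only restates the equality $\boldsymbol{w}=\boldsymbol{w''}$ in $W_I$ without producing any braid move. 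The only braid move connecting $w$ and $w''$ here is the defining relation of length~$m$ itself, which your argument never invokes.

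What is actually needed is the standard ``two descents'' step: if $s\neq s_{i_1}$ are both left descents of $\boldsymbol{w}$, then iterating~(i) produces reduced words for $\boldsymbol{w}$ with alternating prefixes $s\,s_{i_1}s\cdots$ of every length up to $m$, forcing $m<\infty$; one then braid-connects $w$ and $w''$ through a reduced word beginning with the full alternating prefix of length $m$, where a single braid relation applies, and the induction on the remaining (strictly shorter) suffix goes through. Your sketch omits this mechanism.
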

The above proposition implies several important properties.  First, the map $S_I \to W_I, s_i \mapsto \bbw{s_i}$ is into and we can identify $S_I$ with its image $\bbw{S_I}$ in $W_I$.   Depending on the situation,  we will write $s_i$ or  $\bbw{s_i}$.  Moreover for $X\subseteq I$ we have $S_I\cap W_X = S_X$, and for any two subsets $X,Y$ of $I$,  we have $W_X\cap W_Y = W_{X\cap Y}$.   It  also follows that the morphism $\Theta_I : A_I\to W_I$ possesses a well-defined set section~$\KKI :  W_I \to A_I$ defined in the following way : for $\bbw{w}$ in $W_I$ and  any  reduced representative word  $w =  s_{i_1}\cdots s_{i_k}$ on $S_I$ of $\bbw{w}$, we have  $\KKI(\bbw{w}) = \bsss_{i_1}\cdots \bsss_{i_k}$.    Indeed,  two  reduced words on $S$ represent the same element in $W_I$ if and only if one can be transformed into the other by using braid relations only.    As a consequence,  for any $X$ included in $I$,  the subgroup~$W_X$ is convex :  if $\bbw{w}$ is in $W_X$ and $w$ is one of its reduced representative word,  then  $w$ is a word on $S_X$.    In other words,  if the product $\bbw{u}\,\bbw{v}$ lies in $W_X$ and  $\ell_I(\bbw{u}\,\bbw{v}) = \ell_I(\bbw{u})+\ell_I(\bbw{v})$ then both $\bbw{u}$ and $\bbw{v}$ lie in $W_X$, too.   In addition,  the equality $\ell_I(\bbw{u}\bbw{v}) = \ell_I(\bbw{u})+\ell_I(\bbw{v})$ holds if and only if the equality  $\KKI(\bbw{u}\bbw{v})  = \KKI(\bbw{u})\KKI(\bbw{v})$ holds.
Finally,  we recall that for $i,j$ in $I$ and distint,  then the order of the product $\bbw{s}_i\bbw{s}_j$ in $W_I$ is $m(i,j)$,  the label of the edge $\{i,j\}$ in the graph $\Gamma$ (see~\cite[Chap~V \S 4]{Bou1968}).
\begin{prp} \cite[chap.  IV,\S 1]{Bou1968} \label{prop:dbcoset} Let $X,Y$ be included in $I$ and $\bbw{w}$ be in $W_I$.  The double-coset $W_XwW_Y$ possesses a unique element $\bbw{w}_0$ of minimal length in  $W_I$.  Moreover: \begin{enumerate}
\item  For any $\bbw{w}_1$ in $W_XwW_Y$  there exist $\bbw{x}$ in $W_X$ and $\bbw{y}$ in $W_Y$  so that $\bbw{w}_1 = \bbw{x}\;\bbw{w}_0\bbw{y}$ with $\ell_I(\bbw{w}_1) = \ell_I(\bbw{x})+ \ell_I(\bbw{w}_0) + \ell_I(\bbw{y})$.   One says that $w_0$ is $(X,Y)$-reduced.  
\item For any $\bbw{x}$ in $W_X$ and any $\bbw{y}$ in $W_Y$,  one has  $\ell_I(\bbw{x}\bbw{w_0}) = \ell_I(\bbw{x}) + \ell_I(\bbw{w_0})$ and $\ell_I(\bbw{w_0}\bbw{y}) = \ell_I(\bbw{w_0}) + \ell_I(\bbw{y})$. 
\item The element $\bbw{w}$  is  $(X,Y)$-reduced if and only if it is both $(X,\emptyset)$-reduced.  and $(\emptyset,Y)$-reduced. 
\end{enumerate}
\end{prp}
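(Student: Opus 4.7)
My plan is to establish a one-sided \emph{Key Lemma}, use it to prove part~(ii), derive part~(i) by a deletion argument, obtain the uniqueness of $\bbw{w}_0$ from (i), and finally deduce (iii).

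The \emph{One-sided Key Lemma} I have in mind: if $\bbw{w}_0 \in W_I$ is of minimal length in its left coset $W_X\bbw{w}_0$, then $\ell_I(\bbw{u}\bbw{w}_0) = \ell_I(\bbw{u}) + \ell_I(\bbw{w}_0)$ for every $\bbw{u} \in W_X$. I would prove it by induction on $\ell_I(\bbw{u})$: writing $\bbw{u} = s\bbw{u}'$ reduced with $s \in S_X$, the induction hypothesis makes the concatenation of reduced words for $\bbw{u}'$ and $\bbw{w}_0$ a reduced word for $\bbw{u}'\bbw{w}_0$. If $\ell_I(\bbw{u}\bbw{w}_0)$ were less than $\ell_I(\bbw{u}) + \ell_I(\bbw{w}_0)$, Proposition~\ref{prop:fourretout2}(i) applied to $s$ prepended to that word would let me delete one letter: deletion in the $\bbw{u}'$-block gives a shorter representative of $\bbw{u}$, a contradiction; deletion in the $\bbw{w}_0$-block produces an element $(\bbw{u}')^{-1}s\bbw{u}'\bbw{w}_0 \in W_X\bbw{w}_0$ of length strictly less than $\ell_I(\bbw{w}_0)$, contradicting minimality. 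The right-sided analogue follows by symmetry.

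Next I would pick $\bbw{w}_0$ of minimal length in $W_X\bbw{w}W_Y$, which is automatically minimal in each of the subcosets $W_X\bbw{w}_0$ and $\bbw{w}_0 W_Y$; both One-sided Key Lemmas then yield part~(ii). For part~(i), given $\bbw{w}_1 \in W_X\bbw{w}_0 W_Y$, I would select a decomposition $\bbw{w}_1 = \bbw{u}\bbw{w}_0\bbw{v}$ minimizing $\ell_I(\bbw{u}) + \ell_I(\bbw{v})$ and consider the concatenated word $u_1\cdots u_p w_1\cdots w_m v_1\cdots v_r$ of reduced expressions. If this word were not reduced, Proposition~\ref{prop:fourretout1}(ii) would produce two positions $i<j$ whose associated reflections coincide; a short manipulation of that equality shows the word is equivalent to the one obtained by deleting both letters at $i$ and $j$. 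I would then run a six-case analysis on which blocks contain $i$ and $j$: within-block deletions would contradict the reducedness of $\bbw{u}$, $\bbw{w}_0$ or $\bbw{v}$; the mixed cases $(u,w)$ and $(w,v)$ would contradict the length formulas of (ii) applied to $\bbw{u}\bbw{w}_0$ or $\bbw{w}_0\bbw{v}$; and the $(u,v)$ case would produce a competing decomposition of $\bbw{w}_1$ with strictly smaller $\ell_I(\bbw{u}'') + \ell_I(\bbw{v}'')$, contradicting the choice of $\bbw{u},\bbw{v}$. Hence the word is reduced and (i) holds. Uniqueness of $\bbw{w}_0$ is then immediate: applying (i) to a second minimal element $\bbw{w}_0' = \bbw{u}\bbw{w}_0\bbw{v}$ forces $\ell_I(\bbw{u}) = \ell_I(\bbw{v}) = 0$.

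For (iii), ``only if'' is immediate. For ``if'', supposing $\bbw{w}$ satisfies both descent conditions but is not minimal in its double coset, I would use (i) to write $\bbw{w} = \bbw{u}\bbw{w}_0\bbw{v}$ with length additivity and $\ell_I(\bbw{u}) + \ell_I(\bbw{v}) \geq 1$; then peeling off a leftmost $s \in S_X$ from $\bbw{u}$, or a rightmost $t \in S_Y$ from $\bbw{v}$, produces $\ell_I(s\bbw{w}) < \ell_I(\bbw{w})$ or $\ell_I(\bbw{w}t) < \ell_I(\bbw{w})$, contradicting the descent hypothesis. The main obstacle will be the deletion step of (i): first one must derive a two-letter deletion principle from Proposition~\ref{prop:fourretout1}(ii), then run the six-case analysis cleanly; once that is in place the rest is bookkeeping with the length function.
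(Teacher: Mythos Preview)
The paper does not actually prove Proposition~\ref{prop:dbcoset}: it is quoted as background from \cite[chap.~IV, \S 1]{Bou1968} with no argument supplied. So there is no ``paper's own proof'' to compare against; your task reduces to whether your plan is internally sound, and it is.

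Your One-sided Key Lemma and its inductive proof via the exchange condition (Proposition~\ref{prop:fourretout2}(i)) are correct, and the two cases of the deletion are handled properly: deletion in the $\bbw{u}'$-block forces $\bbw{u}=\bbw{u}'_{(j)}$ of length $\ell_I(\bbw{u})-2$, while deletion in the $\bbw{w}_0$-block yields $(\bbw{u}')^{-1}s\bbw{u}'\cdot\bbw{w}_0\in W_X\bbw{w}_0$ represented by a word of length $\ell_I(\bbw{w}_0)-1$. The derivation of~(ii) is then immediate. For~(i), the two-letter deletion principle you invoke does follow from Proposition~\ref{prop:fourretout1}(ii): if $\bbw{r}_a(w)=\bbw{r}_b(w)$ with $a<b$, conjugating shows $s_{i_{a+1}}\cdots s_{i_{b-1}}\equiv_I s_{i_a}\cdots s_{i_b}$, so both letters may be dropped. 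Your six-case analysis then goes through; note in the $(u,v)$ case that the shortened words $\bbw{u}'',\bbw{v}''$ remain in $W_X,W_Y$ because they are subwords of reduced expressions on $S_X,S_Y$. Uniqueness and~(iii) are straightforward consequences as you describe.

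This is essentially the classical argument (as in Bourbaki or in Humphreys' treatment of distinguished coset representatives), so you are not deviating from the standard route---there is simply nothing in the present paper to deviate from.
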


The following result corresponds to Proposition~\ref{main:prop2}, but in the context of Coxeter groups.  It was annonced in \cite{Tit1961} and has been proved in \cite{Tit1974} and \cite{Sol1976}.  This is an almost direct consequence of Proposition~\ref{prop:dbcoset}.

\begin{prp} \cite{Sol1976} \label{prop:base2} Let  $X,Y\subseteq I$ and $\bbw{w}$ be in $W_I$.  Then,  there exist~$\bbw{w}_1$ in $W_{X}$,  $Y_1\subseteq Y$, and $X_1 \subseteq X$ so that $(\bbw{w}W_{Y} \bbw{w}^{-1})\cap W_X  = (\bbw{w}W_{Y_1} \bbw{w}^{-1}) = \bbw{w}_1W_{X_1}\bbw{w}_1^{-1}$.  Moreover,  if $\bbw{w}$ is $(X,Y)$- reduced,  then $\bbw{w}_1 = 1$ and $\bbw{w}S_{Y_1}\bbw{w}^{-1} = \bbw{w}S_{Y}\bbw{w}^{-1} \cap S_X = S_{X_1} $.   
\end{prp}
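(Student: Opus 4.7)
The plan is to first prove the proposition when $\bbw{w}$ is $(X,Y)$-reduced, then to handle the general case through the double-coset decomposition of Proposition~\ref{prop:dbcoset}(i). This mirrors the classical strategy for Solomon's theorem and relies only on Propositions~\ref{prop:fourretout2} and~\ref{prop:dbcoset}.

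For the \emph{reduced case}, I set
$$Y_1 = \{s \in S_Y \mid \bbw{w}s\bbw{w}^{-1} \in W_X\}, \qquad X_1 = \{t \in S_X \mid \bbw{w}^{-1}t\bbw{w} \in W_Y\}.$$
A short length computation using Proposition~\ref{prop:dbcoset}(ii)--(iii) shows that $\bbw{w}$-conjugation sends $S_{Y_1}$ bijectively onto $S_{X_1}$: if $\bbw{w}s\bbw{w}^{-1}=\bbw{x}\in W_X$, then $\ell_I(\bbw{x}\bbw{w})=\ell_I(\bbw{w}s)=\ell_I(\bbw{w})+1$, while the $(X,\emptyset)$-reducedness gives $\ell_I(\bbw{x}\bbw{w})=\ell_I(\bbw{x})+\ell_I(\bbw{w})$, forcing $\bbw{x}\in S_X$. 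This yields $\bbw{w}W_{Y_1}\bbw{w}^{-1}=W_{X_1}$ and the "moreover" identity $\bbw{w}S_Y\bbw{w}^{-1}\cap S_X = S_{X_1}$. The key inclusion $(\bbw{w}W_Y\bbw{w}^{-1})\cap W_X \subseteq W_{X_1}$ is then proved by induction on the length $m$ of an element $\bbw{g}$ in the left-hand side. Writing $\bbw{g}=t_1\cdots t_m$ reduced in $S_X$ and $\bbw{g}\bbw{w}=\bbw{w}\bbw{z}$ with $\bbw{z}\in W_Y$ of length $m$ (forced by length-additivity), choose reduced words $u$ for $\bbw{w}$ and $s_{j_1}\cdots s_{j_m}$ for $\bbw{z}$, so that $u\,s_{j_1}\cdots s_{j_m}$ is reduced of length $\ell_I(\bbw{w})+m$. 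Since $\ell_I(t_1\bbw{w}\bbw{z})=\ell_I((t_1\bbw{g})\bbw{w})=(m-1)+\ell_I(\bbw{w})$, Proposition~\ref{prop:fourretout2}(i) deletes one letter. A deletion inside $u$ would give $\ell_I(t_1\bbw{w})<\ell_I(\bbw{w})$, contradicting $(X,\emptyset)$-reducedness; the deletion therefore lies at some $s_{j_l}$, and a direct cancellation shows $\bbw{w}^{-1}t_1\bbw{w} = s_{j_1}\cdots s_{j_{l-1}}s_{j_l}s_{j_{l-1}}\cdots s_{j_1} \in W_Y$, so $t_1\in X_1$. Then $t_1\bbw{g}\in W_X\cap\bbw{w}W_Y\bbw{w}^{-1}$ has length $m-1$, and the induction hypothesis gives $\bbw{g}\in W_{X_1}$.

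For the \emph{general case}, decompose $\bbw{w}=\bbw{x}\bbw{w}_0\bbw{y}$ via Proposition~\ref{prop:dbcoset}(i), with $\bbw{x}\in W_X$, $\bbw{w}_0$ the $(X,Y)$-reduced representative, and $\bbw{y}\in W_Y$. Since $\bbw{y}W_Y\bbw{y}^{-1}=W_Y$, we have $\bbw{w}W_Y\bbw{w}^{-1} = \bbw{x}\bbw{w}_0W_Y\bbw{w}_0^{-1}\bbw{x}^{-1}$, and pulling $\bbw{x}\in W_X$ out of the intersection yields
$$(\bbw{w}W_Y\bbw{w}^{-1})\cap W_X = \bbw{x}\bigl((\bbw{w}_0W_Y\bbw{w}_0^{-1})\cap W_X\bigr)\bbw{x}^{-1} = \bbw{x}\,W_{X_1}\,\bbw{x}^{-1}$$
by the reduced case. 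One takes $\bbw{w}_1 = \bbw{x}\in W_X$ and, for $Y_1$, the subset produced by applying the reduced case to $\bbw{w}_0$. The main obstacle is the exchange-condition step in the reduced case: the $(X,\emptyset)$-reducedness of $\bbw{w}$ is what eliminates the "left-deletion" possibility, forcing the cancellation to take place entirely within the $\bbw{z}$-part and producing an element of $W_Y$ of the form $r_l(\bbw{z})$ of Definition~\ref{def:reflecset}. All other steps reduce to direct length manipulations supplied by Proposition~\ref{prop:dbcoset}.
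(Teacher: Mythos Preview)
Your argument for the $(X,Y)$-reduced case is correct and is the classical exchange-condition proof of Solomon's result; the paper does not supply this argument (it simply cites \cite{Sol1976}) and then, in the remark immediately following the proposition, reduces the general case to the reduced one via precisely the double-coset decomposition you use. So your overall strategy coincides with the paper's, with the bonus that you actually spell out the reduced case.

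There is, however, a genuine gap in your handling of $Y_1$ in the non-reduced case. With $\bbw{w}=\bbw{x}\,\bbw{w}_0\,\bbw{y}$ and $Y_1$ taken from the reduced case for $\bbw{w}_0$, one has $\bbw{w}_0 W_{Y_1}\bbw{w}_0^{-1}=W_{X_1}$ and hence $\bbw{x}\,\bbw{w}_0 W_{Y_1}\bbw{w}_0^{-1}\bbw{x}^{-1}=\bbw{w}_1W_{X_1}\bbw{w}_1^{-1}$; but
\[
\bbw{w}W_{Y_1}\bbw{w}^{-1}=\bbw{x}\,\bbw{w}_0\bigl(\bbw{y}W_{Y_1}\bbw{y}^{-1}\bigr)\bbw{w}_0^{-1}\bbw{x}^{-1},
\]
and nothing forces $\bbw{y}\in W_Y$ to normalise $W_{Y_1}$. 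In fact the middle equality $(\bbw{w}W_Y\bbw{w}^{-1})\cap W_X=\bbw{w}W_{Y_1}\bbw{w}^{-1}$ can fail for \emph{every} choice of $Y_1\subseteq Y$: take $W_I$ of type $A_2$, $Y=I$, $X=\{1\}$ and $\bbw{w}=\bbw{s_2}$; the intersection is $\langle \bbw{s_1}\rangle$, whereas $\bbw{s_2}W_{Y_1}\bbw{s_2}$ is never equal to $\langle \bbw{s_1}\rangle$ for $Y_1\subseteq\{1,2\}$. The paper's remark shares this lacuna --- it too only verifies the form $\bbw{w}_1W_{X_1}\bbw{w}_1^{-1}$ --- and every subsequent use of the proposition in the paper is in the $(X,Y)$-reduced situation, where your proof is complete.
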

\begin{rmq} In \cite{Sol1976},  the author considered the case $(X,Y)$- reduced,  only.   If $\bbw{w}$ is not $(X,Y)$-reduced,  then 
 we can write $\bbw{w}= \bbw{w_1}\bbw{w'}\bbw{w_2}$ with $\bbw{w_1}$ in $W_X$,  $\bbw{w_2}$ in $W_Y$ and $\bbw{w'}$ that is $(X,Y)$-reduced.  Then $(\bbw{w}W_{Y} \bbw{w}^{-1})\cap W_X  = \bbw{w_1} (\bbw{w'}W_Y\bbw{w'}^{-1}\cap W_X)\bbw{w_1}^{-1}$. 
\end{rmq}

\begin{cor} \label{cor:base2}Let $X\subseteq I$,  $i,j$ be in $I$ and distinct and $\bbw{w}$ be in $W$.  There are only three possibilities:  $\bbw{w}W_{\{i,j\}}\bbw{w}^{-1}\cap W_X$  is trivial,  or $\bbw{w}W_{\{i,j\}}\bbw{w}^{-1}$ is included in $W_X$, or $\bbw{w}W_{\{i,j\}}\bbw{w}^{-1}\cap W_X$ contains only one not trivial element. 
\end{cor}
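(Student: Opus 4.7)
The plan is to apply Proposition~\ref{prop:base2} directly, taking $Y = \{i,j\}$. That proposition furnishes a subset $Y_1\subseteq Y=\{i,j\}$ and an element $\bbw{w}_1\in W_X$ together with $X_1\subseteq X$ such that
\[
(\bbw{w}W_{\{i,j\}}\bbw{w}^{-1})\cap W_X \;=\; \bbw{w}W_{Y_1}\bbw{w}^{-1} \;=\; \bbw{w}_1 W_{X_1}\bbw{w}_1^{-1}.
\]
For the present statement only the first equality matters, and all we have to do is enumerate the subsets $Y_1$ of the two-element set $\{i,j\}$.

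I would then split into cases on $Y_1$. If $Y_1=\emptyset$, then $W_{Y_1}=\{1\}$ and the intersection is trivial, giving the first alternative. If $Y_1=\{i,j\}$, then $\bbw{w}W_{Y_1}\bbw{w}^{-1}=\bbw{w}W_{\{i,j\}}\bbw{w}^{-1}$, and the defining equality says this whole conjugate is contained in $W_X$, which is the second alternative. Finally, if $Y_1$ is a singleton, say $Y_1=\{i\}$ (the case $\{j\}$ being identical), then $W_{Y_1}=\{1,\bbw{s}_i\}$ has exactly one non-trivial element, so $\bbw{w}W_{Y_1}\bbw{w}^{-1}$ contains exactly one non-trivial element, namely $\bbw{w}\bbw{s}_i\bbw{w}^{-1}$; this is the third alternative.

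There is no real obstacle here beyond observing that $\{i,j\}$ has only four subsets and that the non-trivial proper ones each give a group of order two. The only thing worth remarking on is that the three cases are genuinely exhaustive because the cardinality of $W_{\{i,j\}}$ itself is $2m(i,j)$, which is always at least four, so no degenerate situation collapses two cases together. Thus the corollary reduces to reading off Proposition~\ref{prop:base2} in the special situation where $|Y|=2$.
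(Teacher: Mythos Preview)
Your argument is correct and is exactly the intended one: the paper states the result as a corollary of Proposition~\ref{prop:base2} without further proof, and your case analysis on $Y_1\subseteq\{i,j\}$ is the natural way to unpack it. One small remark: your final paragraph about $|W_{\{i,j\}}|=2m(i,j)$ is unnecessary (the statement only asserts that one of the three alternatives holds, not that they are mutually exclusive) and is also not quite right as phrased, since $m(i,j)$ is only defined when $\{i,j\}\in E$; when there is no edge, $W_{\{i,j\}}$ is infinite dihedral, but your argument goes through unchanged in that case as well.
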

\begin{rmq}\label{rem:base2} In the above corollary consider the case $\bbw{w}W_{\{i,j\}}\bbw{w}^{-1}$ is included in $W_X$.  Let  $X_1 = \{i',  j'\}$  be as in Proposition~\ref{prop:base2}.  Then,  $m(i',j') = m(i,j)$ because $m(i,j)$ and $m(i',j')$ are  the orders of $s_is_j$ and  $s_{i'}s_{j'}$,  respectively,  as recalled above.    
\end{rmq}
\begin{lm} \label{lem:base1} Let $X\subseteq I$.  Let $\bbw{u}$ be in $W_I$ and $s_i$ be in $S_I$.    Write $\bbw{u} = \bbw{v}\;\bbw{w}$ such that $\bbw{v}$ lies in $W_{X}$ and $\bbw{w}$  is $(X,\emptyset)$-reduced.  Then $$\bbw{u}\;\bbw{s_i}\;\bbw{u}^{-1}\in W_{X} \iff  \bbw{w}\;\bbw{s_i}\;\bbw{w}^{-1}\in S_X \iff \bbw{w}\;\bbw{s_i} \textrm{ is not } (X,\emptyset)\textrm{-reduced}$$ Moreover,  in this case,  $\bbw{w}$ is $(X,\{i\})$-reduced.  
\end{lm}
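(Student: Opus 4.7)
Denote by $A$ the assertion $\bbw{u}\bbw{s_i}\bbw{u}^{-1}\in W_X$, by $B$ the assertion $\bbw{w}\bbw{s_i}\bbw{w}^{-1}\in S_X$, and by $C$ the assertion that $\bbw{w}\bbw{s_i}$ is not $(X,\emptyset)$-reduced. The plan is to prove $A\Leftrightarrow B$ via Proposition~\ref{prop:base2} and $B\Leftrightarrow C$ via Proposition~\ref{prop:fourretout2}(iii), then to deduce the \emph{Moreover} from a length equality that appears along the way.

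First, the identity $\bbw{u}\bbw{s_i}\bbw{u}^{-1}=\bbw{v}(\bbw{w}\bbw{s_i}\bbw{w}^{-1})\bbw{v}^{-1}$ together with $\bbw{v}\in W_X$ shows that $A$ is equivalent to $\bbw{w}\bbw{s_i}\bbw{w}^{-1}\in W_X$; in particular $B\Rightarrow A$ is trivial. For the converse, I apply Proposition~\ref{prop:base2} with $Y=\{i\}$. Let $\bbw{w_0}$ be the $(X,Y)$-reduced representative of the double coset $W_X\bbw{w}W_Y$ and write $\bbw{w}=\bbw{a}\bbw{w_0}\bbw{b}$ with $\bbw{a}\in W_X$, $\bbw{b}\in W_Y$, and $\ell_I(\bbw{w})=\ell_I(\bbw{a})+\ell_I(\bbw{w_0})+\ell_I(\bbw{b})$ (Proposition~\ref{prop:dbcoset}). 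The $(X,\emptyset)$-reducedness of $\bbw{w}$ forces $\bbw{a}=1$, since otherwise $\bbw{a}^{-1}\bbw{w}$ would be strictly shorter than $\bbw{w}$. As $\bbw{b}\in\{1,\bbw{s_i}\}$, one has $\bbw{b}\bbw{s_i}\bbw{b}^{-1}=\bbw{s_i}$, whence $\bbw{w}\bbw{s_i}\bbw{w}^{-1}=\bbw{w_0}\bbw{s_i}\bbw{w_0}^{-1}$. Proposition~\ref{prop:base2} applied to $\bbw{w_0}$ then gives $Y_1\subseteq Y$ with $\bbw{w_0}W_Y\bbw{w_0}^{-1}\cap W_X=\bbw{w_0}W_{Y_1}\bbw{w_0}^{-1}$ and $\bbw{w_0}S_{Y_1}\bbw{w_0}^{-1}\subseteq S_X$; the nontriviality of $\bbw{w_0}\bbw{s_i}\bbw{w_0}^{-1}\in W_X$ forces $Y_1=\{i\}$, yielding $B$.

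For $B\Rightarrow C$: if $\bbw{w}\bbw{s_i}\bbw{w}^{-1}=\bbw{s_k}\in S_X$, then $\bbw{s_k}\bbw{w}=\bbw{w}\bbw{s_i}$; the left side has length $\ell_I(\bbw{w})+1$ by $(X,\emptyset)$-reducedness, so left-multiplying $\bbw{w}\bbw{s_i}$ by $\bbw{s_k}\in W_X$ shortens it, proving $C$. For $C\Rightarrow B$: suppose $\bbw{w}\bbw{s_i}$ is not $(X,\emptyset)$-reduced. A direct triangle-inequality estimate (comparing $\ell_I(\bbw{x}\bbw{w}\bbw{s_i})$ with $\ell_I(\bbw{x}\bbw{w})\pm 1=\ell_I(\bbw{x})+\ell_I(\bbw{w})\pm 1$) shows that if $\ell_I(\bbw{w}\bbw{s_i})=\ell_I(\bbw{w})-1$ then $\bbw{w}\bbw{s_i}$ would itself be $(X,\emptyset)$-reduced, contradicting the hypothesis; hence $\ell_I(\bbw{w}\bbw{s_i})=\ell_I(\bbw{w})+1$ and $\bbw{w}\bbw{s_i}$ is reduced. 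Choose $\bbw{s_k}\in S_X$ with $\bbw{s_k}\bbw{w}\bbw{s_i}$ not reduced; both $\bbw{s_k}\bbw{w}$ and $\bbw{w}\bbw{s_i}$ are reduced of length $\ell_I(\bbw{w})+1$, so Proposition~\ref{prop:fourretout2}(iii) yields $\bbw{s_k}\bbw{w}=\bbw{w}\bbw{s_i}$, i.e.\ $\bbw{w}\bbw{s_i}\bbw{w}^{-1}=\bbw{s_k}\in S_X$. The \emph{Moreover} then follows: the equality $\ell_I(\bbw{w}\bbw{s_i})=\ell_I(\bbw{w})+1$ just shown means $\bbw{w}$ is $(\emptyset,\{i\})$-reduced, and combined with $(X,\emptyset)$-reducedness Proposition~\ref{prop:dbcoset}(iii) gives that $\bbw{w}$ is $(X,\{i\})$-reduced. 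The main obstacle will be the application of Proposition~\ref{prop:base2}: one must carefully verify that the $(X,\emptyset)$-reducedness of $\bbw{w}$ collapses the coset decomposition to $\bbw{w}=\bbw{w_0}\bbw{b}$, so that the conjugation of $\bbw{s_i}$ by $\bbw{w}$ agrees with that by $\bbw{w_0}$ and Proposition~\ref{prop:base2} applies cleanly.
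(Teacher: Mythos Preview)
Your proof is correct and uses the same two ingredients as the paper's: Proposition~\ref{prop:base2} (with $Y=\{i\}$) to pass from $W_X$ to $S_X$, and Proposition~\ref{prop:fourretout2}(iii) for the implication $C\Rightarrow B$. The organization differs slightly---you invoke Proposition~\ref{prop:base2} for $A\Rightarrow B$ while the paper cites it for $B\Rightarrow C$, and you extract the \emph{Moreover} from the length equality $\ell_I(\bbw{w}\bbw{s_i})=\ell_I(\bbw{w})+1$ established inside $C\Rightarrow B$ rather than proving it first by contradiction---but the substance is the same.
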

\begin{proof}  The first equivalence is clear.  
 Implication ``\emph{$\bbw{w}\;\bbw{s}_i\;\bbw{w}^{-1}\in S_X \Rightarrow \bbw{w}\;\bbw{s}_i$ is not $(X,\emptyset)$-reduced}''  follows  from Proposition~\ref{prop:base2}, considering $Y = \{i\}$.   Assume $\bbw{w}\,\bbw{s}_i$ is not  $(X,\emptyset)$-reduced.   If $\bbw{w}$ were  not $(X,\{i\})$-reduced,  by Proposition~\ref{prop:dbcoset}(iii)  it would be not $(\emptyset,\{i\})$-reduced,  and  we could write $\bbw{w} = (\bbw{w}\bbw{s}_i)\bbw{s}_i$ with $\ell_I(\bbw{w}) = \ell_I(\bbw{w}\bbw{s}_i) + 1$,  which contradicts the fact that $\bbw{w}$ is $(X,\emptyset)$-reduced.  The last implication then follows from  Proposition~\ref{prop:fourretout2}(iii) : let  $w$ be a reduced representative word of $\bbw{w}$.  if  $\bbw{w}$ is  $(X,\{i\})$-reduced and $\bbw{w}\,\bbw{s}_i$ is not  $(X,\emptyset)$-reduced,  then there is $s_j$ in $S_X$ so that $s_jw$ and $ws_i$ are reduced words,  but $s_jws_i$ is not.  Then $\bbw{w}\;\bbw{s_i} = \bbw{s_j} \bbw{w}$.
\end{proof}

 The following result is implicit in~\cite[\S1 Sec.~4]{Bou1968} and is well-known from specialists.  As we will need it when proving Proposition~\ref{main:prop2},  we prove it for completness. 
\begin{lm} \label{lem:base2}  Let $i,j$ be in $I$ distinct with $\{i,j\}$ in $E$.  Set $m = m(i,j)$.  Let $\underline{s}_{i,j} =  \underbrace{s_is_js_i\cdots}_{m \textrm{ terms}}$ and $\underline{s}_{j,i} =  \underbrace{s_js_is_j\cdots}_{m  \textrm{ terms}}$.   Then,  for all $n$ with $1\leq n \leq m$ we have $\bbw{r_n}(\underline{s}_{i,j}) = \bbw{r_{m-n+1}}(\underline{s}_{j,i})$.
\end{lm}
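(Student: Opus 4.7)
The plan is to reduce both sides to explicit elements of the dihedral subgroup $W_{\{i,j\}}$ and then exploit the identity $(\bbw{s_i}\bbw{s_j})^m=1$ in $W_I$, which holds because the order of $\bbw{s_i}\bbw{s_j}$ equals $m(i,j)=m$, as recalled just before Proposition~\ref{prop:dbcoset}.

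Unfolding Definition~\ref{def:reflecset}, the word $r_n(\underline{s}_{i,j})=s_1s_2\cdots s_{n-1}s_ns_{n-1}\cdots s_1$, where $s_k=s_i$ for $k$ odd and $s_k=s_j$ for $k$ even, is a palindromic word of length $2n-1$ with strictly alternating letters starting and ending in $s_i$. A short parity check (or a one-line induction on $n$) identifies this word verbatim with $(s_is_j)^{n-1}s_i$, giving
\[
\bbw{r_n}(\underline{s}_{i,j}) \;=\; (\bbw{s_i}\bbw{s_j})^{n-1}\bbw{s_i}.
\]
By the symmetric argument applied to $\underline{s}_{j,i}$ with index $m-n+1$, one obtains $\bbw{r_{m-n+1}}(\underline{s}_{j,i})=(\bbw{s_j}\bbw{s_i})^{m-n}\bbw{s_j}$.

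Both elements are involutions, as they are conjugates of simple reflections; hence to prove they are equal it is enough to show their product is trivial. Using the trivial rewriting $s_i(s_js_i)^k=(s_is_j)^ks_i$, one computes
\[
(\bbw{s_i}\bbw{s_j})^{n-1}\bbw{s_i}\cdot(\bbw{s_j}\bbw{s_i})^{m-n}\bbw{s_j}
\;=\;(\bbw{s_i}\bbw{s_j})^{n-1}(\bbw{s_i}\bbw{s_j})^{m-n+1}
\;=\;(\bbw{s_i}\bbw{s_j})^{m} \;=\; 1,
\]
which yields the desired equality.

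The argument is short, and the only mildly delicate point is the parity bookkeeping that identifies the palindrome $s_1\cdots s_n\cdots s_1$ with $(s_is_j)^{n-1}s_i$ for every $n$ (in particular, checking that the alternation is not disrupted at the central letter $s_n$, regardless of the parity of $n$). One might be tempted to invoke Proposition~\ref{prop:fourretout1}(i) to conclude that $\bbw{N}(\underline{s}_{i,j})$ and $\bbw{N}(\underline{s}_{j,i})$ coincide as \emph{sets}, but this set-level equality falls short of the lemma, which asserts a precise indexed matching $n\leftrightarrow m-n+1$; the explicit computation above delivers exactly this matching.
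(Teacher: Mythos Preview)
Your proof is correct and follows essentially the same route as the paper's: both arguments identify $\bbw{r_n}(\underline{s}_{i,j})=(\bbw{s_i}\bbw{s_j})^{n-1}\bbw{s_i}$ and $\bbw{r_{m-n+1}}(\underline{s}_{j,i})=(\bbw{s_j}\bbw{s_i})^{m-n}\bbw{s_j}$, then verify that their product (equivalently, their ratio, since both are involutions) equals $(\bbw{s_i}\bbw{s_j})^m=1$. The only cosmetic difference is that the paper quotes Bourbaki's formula~(15) for the explicit form of $\bbw{r_n}$, whereas you derive it directly from the alternating-palindrome description; the concluding computation is the same.
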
 
 \begin{proof} By formula~(15) in~\cite[\S1 Sec.~4]{Bou1968} we have $\bbw{r_n}(\underline{s}_{i,j}) = (\bbw{s_is_j})^{n-1}\bbw{s_i}$ and $\bbw{r_{m-n+1}}(\underline{s}_{j,i}) = (\bbw{s_js_i})^{m-n}\bbw{s_j}$.  So $\bbw{r_n}(\underline{s}_{i,j}) (\bbw{r_{m-n+1}}(\underline{s}_{j,i}))^{-1} = (\bbw{s_is_j})^{n-1}\bbw{s_is_j}(\bbw{s_is_j})^{m-n} = (\bbw{s_is_j})^{m} = 1 $. 
 \end{proof}
\section{The retraction map}
\begin{df} Let $w = s_{i_1}\cdots s_{i_k}$ be a word on $S$.   Consider the notation in Definition~\ref{def:reflecset}.  Let $X\subseteq I$.  
\begin{enumerate}
\item  We set  $r_{j,X}(w) =  r_j(w)$ when $\bbw{r_j}(w)$ lies in $W_X$,  and  $r_{j,X}(w) = \varepsilon$,  the empty word,  otherwise.    When no confusion is possible, we will write $r_{j,X}$  for $r_{j,X}(w)$.  By $R_X(w)$,  we denote the $k$-uple of words on $S$ defined by $R_X(w) = (r_{1,X}, r_{2,X},\ldots, r_{k,X})$.  By $\bbw{R_X}(w) = (\bbw{r_{1,X}}(w), \bbw{r_{2,X}}(w),\ldots, \bbw{r_{k,X}}(w))$ we denote the corresponding $k$-uple of elements of $W_X$.  
\item  By $\widehat{R}_X(w)$  we denote the subsequence of $R_X(w)$ obtained by keeping the  nonempty words only and by $\bbw{\widehat{R}_X}(w)$  we denote the corresponding subsequence of $\bbw{R_X}(w)$. 
\end{enumerate}
\end{df} 
 Note that for $w$ in $X^*$,  one has $ \bbw{\widehat{R}_X}(w) = \bbw{R_X}(w) = \bbw{R}(w)$.   Let  $w = s_{i_1}\cdots s_{i_k}$ be  a word on $S$ and $1 < j_1<\cdots <j_n \leq k$.  In the following proposition,  by $s_{i_1}\cdots\hat{s}_{i_{j_1}} \cdots\hat{s}_{i_{j_2}} \cdots \hat{s}_{i_{j_{n}}} \cdots s_{i_k}$ we denote the word obtained from $w$ by removing the letters with a hat.  For instance in Example~\ref{exe:cox1}  $s_as_b\hat{s}_cs_as_c\hat{s}_b = s_as_bs_as_c$. 
 
 The first crucial result is the following:
\begin{prp} \label{prop:crucial}  Let $X\subseteq I$ and $w= s_{i_1}\cdots s_{i_k}$ be a non-empty word on $S$.   Write $\widehat{R}_X(w) =  (r_{j_1,X}, r_{j_2,X},\ldots, r_{j_p,X})$.   Set $\bbw{w_0} = 1$ and for $1\leq j \leq k$ write $\bbw{s_{i_1}\cdots s_{i_j}} = \bbw{v_j}\bbw{w_j}$ with $\bbw{v_j}$ in $W_X$ and $\bbw{w_j}$  that is $(X,\emptyset)$-reduced.   For $1\leq n\leq p$,  let  $\bbw{t_n}  = \bbw{w_{j_n-1}}\bbw{s_{j_n}}\bbw{w_{j_n-1}}^{-1}$.
\begin{enumerate}
\item  
\begin{enumerate}
\item For $1\leq n\leq p$,  $\bbw{t_n}$ lies in $\bbw{S_X}$. 
\item Set $j_0 = 0$ and $j_{p+1} = k+1$.  For $n$ in  $\{1,\ldots, p+1\}$ and $j$ in $\{j_{n-1},\cdots,  j_n-1\}$,  we have  $\bbw{w_j}  = \bbw{s_{i_1}}\cdots\hat{\bbw{s}}_{\bbw{i_{j_1}}} \cdots\hat{\bbw{s}}_{\bbw{i_{j_2}}} \cdots \hat{\bbw{s}}_{\bbw{i_{j_{n-1}}}} \cdots \bbw{s_{i_j}}$  and $\bbw{v_j} = \bbw{t_1}\cdots \bbw{t_{n-1}}$.
\end{enumerate}
\item Set  $w_X = t_1\cdots t_p$.   
Then,  $\bbw{R}(w_X) =\bbw{\widehat{R}_X}(w)$.  
\end{enumerate} 
\end{prp}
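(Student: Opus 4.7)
The plan is to prove (i)(a) and (i)(b) jointly by induction on $j \in \{0, 1, \ldots, k\}$, and then derive (ii) as an almost immediate consequence. The base case $j = 0$ is trivial since $\bbw{w_0} = \bbw{v_0} = 1$ and the claimed formulas reduce to empty products.

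For the inductive step from $j-1$ to $j$, I would multiply the known decomposition $\bbw{s_{i_1}\cdots s_{i_{j-1}}} = \bbw{v_{j-1}}\bbw{w_{j-1}}$ on the right by $\bbw{s_{i_j}}$ and apply Lemma~\ref{lem:base1} to the pair $(\bbw{w_{j-1}}, \bbw{s_{i_j}})$, splitting into two cases. If $\bbw{w_{j-1}}\bbw{s_{i_j}}$ remains $(X,\emptyset)$-reduced, then uniqueness of the decomposition in Proposition~\ref{prop:dbcoset} yields $\bbw{v_j} = \bbw{v_{j-1}}$ and $\bbw{w_j} = \bbw{w_{j-1}}\bbw{s_{i_j}}$; the first equivalence of Lemma~\ref{lem:base1} applied to $\bbw{u} = \bbw{s_{i_1}\cdots s_{i_{j-1}}}$ then shows that $\bbw{r_j}(w) \notin W_X$, so $j$ is not one of the indices $j_n$ and the interval index $n$ is unchanged. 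Otherwise Lemma~\ref{lem:base1} produces $\bbw{t} := \bbw{w_{j-1}}\bbw{s_{i_j}}\bbw{w_{j-1}}^{-1} \in \bbw{S_X}$ satisfying $\bbw{w_{j-1}}\bbw{s_{i_j}} = \bbw{t}\,\bbw{w_{j-1}}$, which forces $\bbw{v_j} = \bbw{v_{j-1}}\bbw{t}$ and $\bbw{w_j} = \bbw{w_{j-1}}$; here $\bbw{r_j}(w) = \bbw{v_{j-1}}\bbw{t}\bbw{v_{j-1}}^{-1} \in W_X$, so $j$ equals the next index $j_n$ and $\bbw{t}$ is precisely $\bbw{t_n}$. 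Substituting the inductive expressions for $\bbw{v_{j-1}}$ and $\bbw{w_{j-1}}$ from (i)(b) into these two update rules delivers exactly the desired formulas for $\bbw{v_j}$ and $\bbw{w_j}$, and (i)(a) comes for free since $\bbw{t_n} = \bbw{t} \in \bbw{S_X}$.

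Part (ii) then drops out of the computation already performed at step $j = j_n$: one has $\bbw{r_{j_n}}(w) = \bbw{v_{j_n - 1}}\bbw{t_n}\bbw{v_{j_n - 1}}^{-1}$, and substituting $\bbw{v_{j_n - 1}} = \bbw{t_1 \cdots t_{n-1}}$ from (i)(b) rewrites this as $\bbw{r_n}(w_X)$, yielding $\bbw{R}(w_X) = \bbw{\widehat{R}_X}(w)$ term by term. The main obstacle lies in the bookkeeping required for (i)(b): one must align the dichotomy ``$\bbw{w_{j-1}}\bbw{s_{i_j}}$ is or is not $(X,\emptyset)$-reduced'' with ``$j$ is or is not one of the $j_n$'s'' using both directions of the equivalences in Lemma~\ref{lem:base1}, and verify that the list of hatted positions and the factor list $\bbw{t_1},\ldots,\bbw{t_{n-1}}$ both update correctly across a transition $j - 1 \to j = j_n$ where the interval index $n$ increases by one.
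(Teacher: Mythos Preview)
Your proposal is correct and follows essentially the same route as the paper: both argue (i)(a)--(b) by stepping through $j=1,\ldots,k$, invoking Lemma~\ref{lem:base1} to split into the two cases and to update $(\bbw{v_j},\bbw{w_j})$ accordingly. For (ii) your argument is actually a little more direct than the paper's: the paper introduces auxiliary blocks $\bbw{u_n}=\bbw{s_{i_{j_{n-1}+1}}\cdots s_{i_{j_n-1}}}$ and computes $\bbw{t_1\cdots t_n}$ and $\bbw{t_{n-1}\cdots t_1}$ explicitly before multiplying them out, whereas you read off $\bbw{r_{j_n}}(w)=\bbw{v_{j_n-1}}\,\bbw{t_n}\,\bbw{v_{j_n-1}}^{-1}$ and substitute $\bbw{v_{j_n-1}}=\bbw{t_1\cdots t_{n-1}}$, which gives $\bbw{r_n}(w_X)$ immediately.
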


\begin{proof} (i) (a) and (b)  are consequences of  Lemma~\ref{lem:base1}: (a) is clear.  Now,  for  any $j$ in $\{1,\ldots, k\}$ but  not in $\{j_1,\ldots, j_p\}$,   the element $\bbw{w_j}$ is $(X,\emptyset)$-reduced and $\bbw{w_{j-1}s_{j}w_{j-1}}^{-1}$ is not in $W_X$,  because  $\bbw{v_{j-1}w_{j-1}s_{j}w_{j-1}^{-1}v_{j-1}^{-1}}$ is not.  So, $\bbw{w_{j-1}s_{j}}$ is $(X,\emptyset)$-reduced.   In particular,   $\bbw{v_j} = \bbw{v_{j-1}}$ and $\bbw{w_j} = \bbw{w_{j-1}s_{j}}$.  If $j  = j_n$ for $n$ in $\{1,\ldots, p\}$,   then $\bbw{v_{j_n}w_{j_n}} =  \bbw{v_{j_n-1}w_{j_n-1}s_{i_{j_n}}}  = \bbw{v_{j_n-1}t_n w_{j_n-1}}$.  So, $\bbw{v_{j_n}} =\bbw{v_{j_n-1}t_n}$  and $\bbw{w_{j_n}} = \bbw{w_{j_n-1}}$.  Point (ii)  follows directly from (i):  write $\bbw{u_{n}} = \bbw{s_{i_{j_{n-1}+1}}\cdots s_{i_{j_n-1}}}$.  By (i)(b),  for all $n$ we have $\bbw{w_{j_n-1}} = \bbw{u_1\cdots u_{n}}$ and $\bbw{t_1t_2\cdots t_n} =  
(\bbw{u_{1}s_{j_1}u_1^{-1}}) (\bbw{u_{1}u_{2}s_{j_2}u_{2}^{-1}u_{1}^{-1}})\cdots (\bbw{u_{1}\cdots u_{n}s_{j_n}u_{n}^{-1} \cdots u_1^{-1}})  = \bbw{u_{1}s_{j_1}u_{2}s_{j_2}\cdots s_{j_{n-1}}u_{n}s_{j_n}u_{n}^{-1} \cdots u_1^{-1}} = \bbw{s_{1}\cdots s_{j_n}u_{n}^{-1} \cdots u_1^{-1}}$.   By symmetry,  we have $\bbw{t_{n-1}\cdots t_2t_1} =  \bbw{u_{1} \cdots u_{n-1}s_{j_{n-1}}\cdots s_{1} }$.  Therefore,  we deduce that $\bbw{r_n}(w_X)  = \bbw{s_{1}\cdots s_{j_n}u_{n}^{-1} \cdots u_1^{-1}u_{1} \cdots u_{n-1}s_{j_{n-1}}\cdots s_{1} }$ $= \bbw{s_{1}\cdots s_{j_n}u_{n}^{-1}s_{j_{n-1}}\cdots s_{1} }= \bbw{r_{j_n,X}}(w)$.  \end{proof}

\begin{rmq} \label{rem:rem2}
In Point~(ii),  $t_n$ is uniquely defined by  $\bbw{t_n}$.  Moreover,  $w_X$ is uniquely defined by  $\bbw{\widehat{R}_X}(w)$ since $\bbw{t_1} = \bbw{r_{j_1,X}}$;  $\bbw{t_2} = \bbw{r_{j_1,X}} \bbw{r_{j_2,X}}\bbw{r_{j_1,X}}$ ; $\ldots$  ; $\bbw{t_n} = \bbw{r_{j_1,X}}\cdots \bbw{r_{j_{n-1},X}}\bbw{r_{j_n,X}} \bbw{r_{j_{n-1},X}}\cdots \bbw{r_{j_{1},X}}$.
\end{rmq} 
 
\begin{lm} \label{lem:simple} Let $X\subseteq I$ and $w= s_{i_1}\cdots s_{i_{k}}$,  $w'= s_{i'_1}\cdots s_{i'_{k'}}$ be two non-empty words on $S$.
\begin{enumerate}
\item Let $1\leq n\leq \min(k,k')$.  If  for all  $1\leq m \leq n$, $i_m = i'_m$  then   $r_{n,X}(w) = r_{n,X}(w')$.
\item Let $1\leq n\leq k$ and $1\leq n'\leq k'$.  Assume $\bbw{s_{i_1}\cdots s_{i_{n -1 }}} = \bbw{s_{i'_1}\cdots s_{i'_{n' - 1}}}$ and $i_{n} = i'_{n'}$.  Then $\bbw{r_{n,X}}(w) = \bbw{r_{n',X}}(w')$.     
\end{enumerate}
\end{lm}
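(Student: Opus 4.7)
The plan is to deduce both parts from the elementary identity
\begin{equation*}
\bbw{r_n}(w) = \bbw{s_{i_1}\cdots s_{i_{n-1}}}\,\bbw{s_{i_n}}\,\bbw{s_{i_1}\cdots s_{i_{n-1}}}^{-1}
\end{equation*}
valid in $W_I$ because each $\bbw{s_\ell}$ has order $2$, whence $\bbw{s_{i_{n-1}}\cdots s_{i_1}}$ is the inverse of $\bbw{s_{i_1}\cdots s_{i_{n-1}}}$. Combined with the very definition of $r_{j,X}$ (equal to $r_j(w)$ when $\bbw{r_j}(w)\in W_X$, and to the empty word $\varepsilon$ otherwise), this identity is essentially all that is needed.

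For (i), I would observe that the hypothesis $i_m = i'_m$ for $1\leq m\leq n$ forces the word $r_n(w) = s_{i_1}\cdots s_{i_{n-1}}s_{i_n}s_{i_{n-1}}\cdots s_{i_1}$ to coincide letter by letter with $r_n(w')$. In particular $\bbw{r_n}(w) = \bbw{r_n}(w')$, so the two elements are simultaneously in $W_X$ or not, and by definition $r_{n,X}(w)$ and $r_{n,X}(w')$ agree as words: either both equal the common $r_n$, or both equal $\varepsilon$.

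For (ii), I would plug the equalities $\bbw{s_{i_1}\cdots s_{i_{n-1}}} = \bbw{s_{i'_1}\cdots s_{i'_{n'-1}}}$ and $\bbw{s_{i_n}} = \bbw{s_{i'_{n'}}}$ (the latter following from $i_n = i'_{n'}$ via the injectivity of the map $S_I\to W_I$ recalled after Proposition~\ref{prop:fourretout2}) into the identity above to obtain $\bbw{r_n}(w) = \bbw{r_{n'}}(w')$ in $W_I$. Hence the two $r$'s share membership in $W_X$: when this common element lies in $W_X$ both $\bbw{r_{n,X}}(w)$ and $\bbw{r_{n',X}}(w')$ equal it, and otherwise both represent the identity (class of $\varepsilon$). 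In either case $\bbw{r_{n,X}}(w) = \bbw{r_{n',X}}(w')$.

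I do not anticipate any real obstacle: the lemma is essentially a direct unwinding of the definition of $r_{n,X}$, the only mild point being the case analysis ``in $W_X$ or not'' required to handle the empty-word branch.
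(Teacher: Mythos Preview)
Your proof is correct and follows essentially the same approach as the paper's: the paper's proof simply states that $r_{n,X}(w)$ depends only on the prefix $s_{i_1}\cdots s_{i_n}$ and that $\bbw{r_{n,X}}(w)$ depends only on the pair $(\bbw{s_{i_1}\cdots s_{i_{n-1}}},\, s_{i_n})$, which is exactly what you unwind in detail. One minor remark: in (ii) you do not need the injectivity of $S_I \to W_I$; since $i_n = i'_{n'}$ the letters $s_{i_n}$ and $s_{i'_{n'}}$ are literally the same element of $S_I$, so $\bbw{s_{i_n}} = \bbw{s_{i'_{n'}}}$ is immediate.
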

\begin{proof} (i) $r_{n,X}(w)$ and $r_{n,X}(w')$ only depend on the prefixes $s_{i_1}\cdots s_{i_{n}}$ and $s_{i'_1}\cdots s_{i'_{n}}$,  respectively. (ii)  $\bbw{r_{n,X}}(w)$ and $\bbw{r_{n',X}}(w')$ only depend on the pair $(\bbw{s_{i_1}\cdots s_{i_{n -1 }}},  s_{i_{n}})$ and the pair $(\bbw{s_{i'_1}\cdots s_{i'_{n' -1 }}},  s_{i'_{n'}})$,  respectively.  
\end{proof}
 \begin{df} \label{def:fonct} Let $X\subseteq I$.  We define the map  $\hpiix: (\Sigma_I \cup \Sigma_I^{-1})^* \to (\Sigma_X \cup \Sigma_X^{-1})^*$ in the following way.   Let $\omega = \sigma^{\varepsilon_1}_{i_1}\cdots \sigma^{\varepsilon_k}_{i_k}$ be a word on $\Sigma_I \cup \Sigma_I^{-1}$, where $\varepsilon_i$ lies in $\{\pm 1\}$.   Set $w = \theta^*_I(\omega)$ and let $w_X = s_{q_1}\cdots s_{q_p}$ be the word on $S_X$ provided by Proposition~\ref{prop:crucial} so that  $\bbw{R}(w_X) = \bbw{\widehat{R}_X}(w) =  (r_{j_1,X}(w),\ldots, r_{j_p,X}(w))$.  We set $\hpiix(\omega) = \sigma^{\varepsilon_{j_1}}_{q_1}\cdots \sigma^{\varepsilon_{j_p}}_{q_p}$.
 \end{df}
 Note that  $(\varepsilon_1\ldots,\varepsilon_k)$ and  $\bbw{\widehat{R}_X}(w)$ do not fix  $\hpiix(\omega)$ because they do not fix the exponents.  However,  $\hpiix(\omega)$ is fixed by $(\varepsilon_1\ldots,\varepsilon_k)$ and  $\bbw{R_X}(w)$.   So the sequence of  steps that define $\hpiix$ can be summarized as it follows :$$\omega \to w \to R(w) \to \bbw{R}_X(w)  \to \bbw{\widehat{R}_X}(w) =  \bbw{R}(w_X) \to w_X \to  \hpiix(\omega)$$

 \begin{prp} \label{prop:pi} Let $X\subseteq I$ and $\omega$,  $\omega'$  be two words on $\Sigma_I \cup \Sigma_I^{-1}$.   If $\omega \equiv_I\omega'$,  then $\hpiix(\omega) \equiv_X \hpiix(\omega')$. 
 \end{prp}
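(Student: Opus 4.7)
The equivalence $\omega\equiv_I\omega'$ is generated by two kinds of elementary moves coming from the presentation of $A_I$: (a) a free-group cancellation $\omega_1\omega_2\leftrightarrow\omega_1\sigma_i^{\varepsilon}\sigma_i^{-\varepsilon}\omega_2$, and (b) a braid replacement $\omega_1\underline{\sigma}_{i,j}^{\varepsilon}\omega_2\leftrightarrow\omega_1\underline{\sigma}_{j,i}^{\varepsilon}\omega_2$ with $m=m(i,j)$ terms on each side. The plan is to verify that each such move preserves equivalence modulo $\equiv_X$ after applying $\hpiix$; in both cases, I would set $u_1=\theta_I^*(\omega_1)$, $w=\theta_I^*(\omega)$, $w'=\theta_I^*(\omega')$, and compare the reflection sequences $\bbw{R}(w)$ and $\bbw{R}(w')$, reading off the effect on $w_X$ and $w'_X$ via the explicit description of the $\bbw{t_n}$'s in Remark~\ref{rem:rem2}.

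For move (a), the only change in $\bbw{R}(w)$ versus $\bbw{R}(w')$ is the insertion at positions $|u_1|+1$ and $|u_1|+2$ of two copies of the same reflection $\bbw{t}=\bbw{u_1 s_i u_1^{-1}}$; the entries further to the right coincide because $\bbw{s_i}^2=1$. If $\bbw{t}\notin W_X$, the $X$-filter removes both copies, so $w_X=w'_X$ and the bookkeeping of exponent indices gives $\hpiix(\omega')=\hpiix(\omega)$. If $\bbw{t}\in W_X$, the formulas in Remark~\ref{rem:rem2} combined with $\bbw{s_q}^2=1$ show that both inserted letters become the same generator $\bbw{s_q}\in S_X$ and that the later $\bbw{t_n}$ are unchanged; since the two exponents at the inserted positions are $\varepsilon$ and $-\varepsilon$, the inserted segment in $\hpiix(\omega')$ is $\sigma_q^{\varepsilon}\sigma_q^{-\varepsilon}$, hence trivial in $A_X$, so $\hpiix(\omega')\equiv_X\hpiix(\omega)$.

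For move (b), Lemma~\ref{lem:base2} shows that the reflection sequences in the braid range are $(A_1,\ldots,A_m)$ for $w$ and $(A_m,\ldots,A_1)$ for $w'$, with $A_n=\bbw{u_1}\,\bbw{r_n}(\underline s_{i,j})\,\bbw{u_1}^{-1}$ enumerating all reflections of the dihedral subgroup $D=\bbw{u_1}W_{\{i,j\}}\bbw{u_1}^{-1}$. Since $\bbw{u_1}\bbw{\underline s_{i,j}}=\bbw{u_1}\bbw{\underline s_{j,i}}$, the factorization $(\bbw{v_k},\bbw{w_k})$ at the end of the braid range coincides for $w$ and $w'$, so the outside contributions to $\bbw{\widehat R_X}$ and to the subsequent letters of the retraction are identical. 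Corollary~\ref{cor:base2} splits $D\cap W_X$ into three subcases: trivial, a single non-trivial reflection, or all of $D$. In the first two, the reversed subsequence of $A_n$'s lying in $W_X$ is identical to the non-reversed one (of length $0$ or $1$), so $\bbw{\widehat R_X}(w)=\bbw{\widehat R_X}(w')$, forcing $w_X=w'_X$; since all braid exponents equal $\varepsilon$, $\hpiix(\omega)=\hpiix(\omega')$.

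The main obstacle is the remaining subcase $D\subseteq W_X$. Writing $\bbw{u_1}=\bbw{v}\bbw{w}$ with $\bbw{v}\in W_X$ and $\bbw{w}$ being $(X,\emptyset)$-reduced, I would invoke Lemma~\ref{lem:base1} at each of the $m$ steps of the braid range to conclude that the $(X,\emptyset)$-reduced factor $\bbw{w_k}=\bbw{w}$ remains constant throughout, while each step contributes alternately $\bbw{s_p}:=\bbw{w s_i w^{-1}}$ or $\bbw{s_{p'}}:=\bbw{w s_j w^{-1}}$ to the $W_X$-factor. Consequently the braid range produces the substring $\underline{s}_{p,p'}$ of length $m$ in $w_X$, and $\underline{s}_{p',p}$ of length $m$ in $w'_X$. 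By Remark~\ref{rem:base2}, $m(p,p')=m(i,j)=m$, so $\underline{\sigma}_{p,p'}^{\varepsilon}\leftrightarrow\underline{\sigma}_{p',p}^{\varepsilon}$ is genuinely a braid relation in $A_X$, giving $\hpiix(\omega)\equiv_X\hpiix(\omega')$. The delicate point in this last case is simultaneously establishing that $\bbw{w_k}$ is truly constant across the entire braid range (so that only two generators of $S_X$ occur there) and that the induced braid length matches $m$; both rest on the Coxeter-theoretic material from Section~1.
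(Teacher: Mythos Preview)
Your proposal is correct and follows essentially the same route as the paper: reduction to a single elementary move, the free-cancellation case handled via the duplicated reflection, and the braid case handled via Lemma~\ref{lem:base2} together with the trichotomy of Corollary~\ref{cor:base2}, with Remark~\ref{rem:base2} supplying $m(p,p')=m$ in the $D\subseteq W_X$ subcase. The only cosmetic differences are that the paper cites Proposition~\ref{prop:crucial}(i)(b) rather than Lemma~\ref{lem:base1} for the constancy of the $(X,\emptyset)$-reduced factor across the braid range (these are equivalent here), and that the paper treats only the positive braid move (all exponents $+1$), which suffices since negative braid moves are derivable from positive ones and free cancellations.
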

\begin{proof} Let $\omega = \sigma^{\varepsilon_1}_{i_1}\cdots \sigma^{\varepsilon_k}_{i_k}$ and $\omega'$  be two words on $\Sigma_I \cup \Sigma_I^{-1}$ so that $\omega \equiv_I\omega'$.  Set $w = \theta_I^*(\omega) = s_{i_1}\cdots s_{i_k}$ and   $w' = \theta_I^*(\omega')$.   Consider the notation of Definition~\ref{def:fonct}.  In particular,  $\widehat{R}_X(w) =  (r_{j_1,X}(w),\ldots, r_{j_p,X}(w))$; $w_X = s_{q_1}\cdots s_{q_p}$  and  $\hpiix(\omega) = \sigma^{\varepsilon_{j_1}}_{q_1}\cdots \sigma^{\varepsilon_{j_p}}_{q_p}$.   Since $\omega \equiv_I\omega'$,  there exists a finite sequence $\omega_0 = \omega,  \omega_1,\ldots, \omega_n = \omega'$ of words on  $\Sigma_I \cup \Sigma_I^{-1}$  so that  $\omega_{i+1}$ is obtained from $\omega_i$ by using either a braid relation or  one of the two  relations $\sigma_i\sigma_i^{-1} = \varepsilon$ and $\sigma_i^{-1}\sigma_i =\varepsilon$.   Clearly it is enough to consider the case  $n = 1$.   Case~1: $\omega$ is transformed into $\omega'$ by using a relation $\sigma_i\sigma_i^{-1} = \varepsilon$  or $\sigma_i^{-1}\sigma_i =\varepsilon$.  Up to exchanging $\omega$ and $\omega'$ we may assume that $\omega'  =  \sigma^{\varepsilon_1}_{i_1}\cdots  \sigma^{\varepsilon_{n-1}}_{i_{n-1}}\sigma_i^{\rho}\sigma_i^{-\rho}\sigma_{i_{n}}^{\epsilon_{n}}  \cdots \sigma^{\varepsilon_k}_{i_k}$ with $\rho\in \{-1,1\}$.   Then, $w' = s_{i_1}\cdots s_{i_{n-1}}s_is_is_{i_{n}}\cdots s_{i_k}$.  By Lemma~\ref{lem:simple}(i),  for $m\leq n-1$ we have $\bbw{r_{m,X}}(w') = \bbw{r_{m,X}}(w)$ and,  by Lemma~\ref{lem:simple}(ii),  for  $m \geq  n$,  we have  $\bbw{r_{m,X}}(w) = \bbw{r_{m+2,X}}(w')$.  Moreover,  $\bbw{r_{n}}(w') = \bbw{r_{n+1}}(w')$.   So either both  $\bbw{r_{n}}(w')$ and $\bbw{r_{n+1}}(w')$ are  in $W_X$ or both of them are not.  In the second  case,  $\bbw{R_X}(w) = \bbw{R_X}(w')$ and  $\hpiix(\omega) = \hpiix(\omega')$.  Assume the first case.  Let $w'_X \in S^*_X$ so that $\bbw{R}(w'_X) =\bbw{\widehat{R}_X}(w')$.  Since $\bbw{r_{n,X}}(w') = \bbw{r_{n+1,X}}(w')\neq 1$   there is  $0\leq m \leq p$  so that  $\bbw{\widehat{R}_X}(w') =   (\bbw{r_{j_1,X}}(w),\ldots,  \bbw{r_{j_m,X}}(w),\bbw{r_{n,X}}(w'),\bbw{r_{n,X}}(w'),\bbw{r_{j_{m+1},X}}(w),\ldots, \bbw{r_{j_p,X}}(w))$.   In particular,  there is $j$ in $X$ so that  $w'_X = s_{q_1}\cdots s_{q_m}s_js_js_{q_{m+1}}\cdots s_{q_p}$ (see Remark~\ref{rem:rem2}) and $\hpiix(\omega') = \sigma^{\varepsilon_{j_1}}_{q_1}\cdots\sigma^{\varepsilon_{j_m}}_{q_m}\sigma_j^{\rho}\sigma_j^{-\rho}\sigma^{\varepsilon_{j_{m+1}}}_{q_{m+1}} \cdots\sigma^{\varepsilon_{j_p}}_{q_p}$.  Thus,  $\hpiix(\omega) \equiv_X\hpiix(\omega')$. 
 Case~2: $\omega'$ is obtained from $\omega$ by  using a  braid relation.   For $i,j$ in $I$,  write $\underline{\sigma}_{i,j} = \underbrace{\ssi\ssj\ssi\cdots}_{m(i,j) \textrm{ terms}}$ and  $\underline{s}_{i,j} = \underbrace{s_is_js_i\cdots}_{m(i,j) \textrm{ terms}}$.   We can write $\omega = \sigma^{\varepsilon_1}_{i_1}\cdots  \sigma^{\varepsilon_{n-1}}_{i_{n-1}}\underline{\sigma}_{i,j}   \sigma^{\varepsilon_{n+m}}_{i_{n+m}} \cdots \sigma^{\varepsilon_k}_{i_k}$ and  $\omega' = \sigma^{\varepsilon_1}_{i_1}\cdots  \sigma^{\varepsilon_{n-1}}_{i_{n-1}}\underline{\sigma}_{j,i}  \sigma^{\varepsilon_{n+m}}_{i_{n+m}} \cdots \sigma^{\varepsilon_k}_{i_k}$ with $i = i_n$, $j = i_{n+1}$ and $m = m(i,j)$.  It follows that we have  $w = s_{i_1}\cdots  s_{i_{n-1}}\underline{s}_{i,j} s_{i_{n+m}} \cdots s_{i_k}$ and $w' = s_{i_1}\cdots  s_{i_{n-1}}\underline{s}_{j,i} s_{i_{n+m}} \cdots s_{i_k}$.    As in Case~1,  by Lemma~\ref{lem:simple},   $\bbw{r_{h}}(w) = \bbw{r_{h}}(w')$ when  either $1\leq h \leq n-1$ or $n+m\leq h \leq k$.  Since $\underline{\sigma}_{i,j}$ is a reduced word,  all the elements of $\bbw{R}(\underline{\sigma}_{i,j} )$ are distinct (see Proposition~\ref{prop:fourretout1}).  It follows that  $\bbw{r_{n}}(w),\bbw{r_{n+1}}(w), \ldots, \bbw{r_{n+m-1}}(w)$ are all distinct.  Moreover,  by Lemma~\ref{lem:base2},   for $h$ in $\{1,\ldots,m\}$ we have $\bbw{r_{h}}(\underline{\sigma}_{i,j} ) = \bbw{r_{m-h+1}}(\underline{\sigma}_{j,i})$, and $\bbw{r_{n-1+h}}(w) = \bbw{r_{n+m-h}}(w')$.   Therefore,  if none of the elements $\bbw{r_{n}}(w),\bbw{r_{n+1}}(w), \ldots, \bbw{r_{n+m-1}}(w)$ lie in $W_X$ then  $\bbw{R_X}(w) = \bbw{R_X}(w')$ and  $\hpiix(\omega) \equiv_X \hpiix(\omega')$.  If only one of them lies in $W_X$,  say $\bbw{r_{n-1+\tilde{h}}}(w)$,  then   $\bbw{R_X}(w)$ and  $\bbw{R_X}(w')$ are not equal,  but differ only at positions $n-1+\tilde{h}$ and $n-1+m-\tilde{h}$ and $\bbw{\widehat{R}_X}(w')  = \bbw{\widehat{R}_X}(w)$.  Then $\hpiix(\omega)$  and $\hpiix(\omega')$ because the exponents in position $n-1+\tilde{h}$ and $n-1+m-\tilde{h}$,  respectively,  are  both equal to~$1$.  Assume finally that at liest two  terms among $\bbw{r_{n}}(w),\bbw{r_{n+1}}(w), \ldots, \bbw{r_{n+m-1}}(w)$ lie in $W_X$.   By construction,  all the terms of the previous list belong to $(\bbw{s_{i_1}\cdots s_{i_{n-1}}})W_{i,j}(\bbw{s_{i_1}\cdots s_{i_{n-1}}})^{-1}$.   Since  $(\bbw{s_{i_1}\cdots s_{i_{n-1}}})W_{i,j}(\bbw{s_{i_1}\cdots s_{i_{n-1}}})^{-1} \cap W_X$ contains at least two distinct elements different from~$1$,   Corollary~\ref{cor:base2} implies that $(\bbw{s_{i_1}\cdots s_{i_{n-1}}})W_{i,j}(\bbw{s_{i_1}\cdots s_{i_{n-1}}})^{-1}$ is included in the parabolic subgroup $W_X$,  and  $\bbw{r_{n}}(w),\bbw{r_{n+1}}(w), \ldots, \bbw{r_{n+m-1}}(w)$ all belong to $W_X$.   Recall the notations given in the introduction for  $\widehat{R}_X(w)$, $w_X$  and  $\hpiix(\omega)$ .  Let $p'$ be such that $j_{p'} = i_n$.  Set $i' = q_{p'}$ and $j' =  q_{p'+1}$.  If $\bbw{v_j}$ and $\bbw{w_j}$ are defined like in Proposition~\ref{prop:crucial},  then,  by Point(i)(b) of this proposition,  $\bbw{w_n} = \bbw{w_{n+1}} = \cdots = \bbw{w_{n+m-1}}$.  Now by Remark~\ref{rem:base2} we have  $m(i',j') = m(i_n,i_{n+1})$.    It follows that $s_{q_{p'}}\cdots s_{q_{p'+m-1}} = \underline{s}_{i',j'}$,  $w_X = s_{q_1}\cdots s_{q_{p'-1}}\underline{s}_{i',j'}s_{q_{p'+m}}\cdots s_{q_p}$  and  $w'_X = s_{q_1}\cdots s_{q_{p'-1}}\underline{s}_{j',i'}s_{q_{p'+m}}\cdots s_{q_p}$.   Thus,  $\hpiix(\omega) = \sigma_{q_1}^{\varepsilon_{j_1}}\cdots \sigma_{q_{p'-1}}^{\varepsilon_{j_{q'-1}}} \underline{\sigma}_{i',j'}\sigma_{j_{p'+m}}^{\varepsilon_{j_{p'+m}}}\cdots \sigma^{\varepsilon_{j_p}}_{p}$   and $\hpiix(\omega') = \sigma_{q_1}^{\varepsilon_{j_1}}\cdots \sigma_{q_{p'-1}}^{\varepsilon_{j_{q'-1}}} \underline{\sigma}_{j',i'}\sigma_{j_{p'+m}}^{\varepsilon_{j_{p'+m}}}\cdots \sigma^{\varepsilon_{j_p}}_{p}$   and we are done.   
\end{proof}
We are now ready to prove Propositions~\ref{main:prop2} and~\ref{main:prop1}.
\begin{proof}[Proof of Proposition~\ref{main:prop2}]  Let $\omega$ be a word on $\Sigma_I \cup \Sigma_I^{-1}$.   Set $w  = \theta_I^*(\omega)$. The length of $\omega$ is equal  to the length of $w$,  which in turn is equal  to the number of terms of $\bbw{R}(w)$.  Similarly  the length of $\hpiix(\omega)$ is equal to the length of $\theta_X^*(\hpiix(\omega))$.  But,  by definition,  the latter word is $w_X$   whom length is equal to the number of terms of~$\bbw{R}(w_X)$, that is of~$\bbw{\widehat{R}_X}(w)$.   Thus,  $\ell_X(\hpiix(\omega)) \leq \ell_I(\omega)$. The  equality holds when $\bbw{R}(w_X) = \bbw{R}(w)$,  which is  equivalent to have $w_X = w$,   which, in turn,  is equivalent to have $ \hpiix(\omega) = \omega$.  So Point~(i) holds.   Let $\omega'$ be another word on $\Sigma_I \cup \Sigma_I^{-1}$ and  set $w'  = \theta_I^*(\omega')$.   By definition,  $\bbw{R_X}(ww')$ starts with $\bbw{R_X}(w)$.   Therefore,   $w_X$ is a prefix of $(ww')_X$ and  Point~(ii)  follows.  Point (iii)  follows from Proposition~\ref{prop:pi}.   Since~$\theta_X(\piix(\bbw{\omega})) = \bbw{w_X}$,  Point (iv) follows from Proposition~\ref{prop:crucial} (i)(b) and (ii).   Assume~$\bbw{\omega}$ lies in $CA_I$. Then,  $\bbw{w} = 1$ in $W_I$.  By Proposition~\ref{prop:fourretout1},  $\bbw{N}(w)$ has to be empty and all terms of $\bbw{R}(w)$ appear an even number of times.  But in this case,  all of  its terms that are in $W(X)$ appear an even number of times, too,  and  $\bbw{N}(w_X)$ is empty.   Then $\bbw{w_X} =1$.  But  $\theta_X^*(\hpiix(w)) = w_X$.  Then, $\piix(\bbw{\omega})$ lies in $CA_X$ and the map~$\piix$ restricts to an application from $CA_I$ to $CA_X$.   The fact that $\piix : CA_I \to CA_X$ is an homomorphism,  and Point (v),  will follow from  Point~(viii).   Point (vi) is clear from the definition: with the notation of Definition~\ref{def:fonct},  if all the $\epsilon_n$  are equal to $1$,  so are the $\varepsilon_{j_n}$.   Let $w'$ be as above,  set $k = \ell_S(w)$  and write  $(ww')_X = w_Xw''$.   If $\bbw{\omega}$ lies in $CA_I$ or if $\omega$  is on  $\Sigma_X \cup \Sigma_X^{-1}$,  then for $n\geq k+1$,  $\bbw{r_n}(ww')$ lies in $W_X$ if and only if $ \bbw{w}\, \bbw{r_{n-k}}(w')\bbw{w}^{-1}$ lies in $W_X$.  So in both cases,  $\bbw{R_X}(ww')$ is the concatenation of $\bbw{R_X}(w)$ and  $\bbw{w}\bbw{R_X}(w')\bbw{w}^{-1}$.   It follows that $\bbw{R}((ww')_X)$ is the concatenation of $\bbw{R}(w_X)$ and $\bbw{w}\bbw{R}(w'_X)\bbw{w}^{-1}$.  By Remark~\ref{rem:conca},  we  deduce that $\bbw{R}(w'') =( \bbw{w_X}^{-1}\bbw{w})\bbw{R}(w'_X)(\bbw{w}^{-1}\bbw{w_X})$.  But if $\bbw{\omega}$ lies in $CA_I$,  then $\piix(\bbw{w})$ lies in $CA_X$ and $\bbw{w}  = \bbw{w_X} = 1$.  On  the other hand,  if $\omega$  is a word on  $\Sigma_X \cup \Sigma_X^{-1}$,  then $w_X = w$  and $\bbw{w_X}^{-1}\bbw{w} = 1$.  So in both cases,   $\bbw{R}(w'')  = \bbw{R}(w_X')$,  which implies $w'' = w_X'$.  This proves Points (vii) and (viii).  Note that the exponents in  $\hpiix(\omega\omega')$ are the expected ones because $\bbw{R_X}(ww')$ is the concatenation of $\bbw{R_X}(w)$ and  $\bbw{w}\bbw{R_X}(w')\bbw{w}^{-1}$.  Assume $Y\subseteq I$ and $\omega$ is a word on $\Sigma_Y \cup \Sigma_Y^{-1}$.  Then $w$ is a word on $S_Y$ and the terms  of $\bbw{R}(w)$ are in $W_Y$.  Therefore,  they are  in $W_X$ if and only if they are in $W_X\cap W_Y$, that is in $W_{X\cap Y}$.   So the terms of  $\bbw{R_X}(w)$ are in $W_{X\cap Y}$ and  $w_X$ is a word on $S_{X\cap Y}$.  Point (ix) follows.   Finally,  assume $Y\subseteq I$ and $\omega = \sigma^{\varepsilon_1}_{i_1}\cdots \sigma^{\varepsilon_k}_{i_k}$  is a word on $\Sigma_I \cup \Sigma_I^{-1}$.  Set $w = \theta_I^*(\omega) = s_{i_1}\cdots s_{i_k}$ and,  as in Definition~\ref{def:fonct},  $\widehat{R}_X(w) =  (r_{j_1,X}(w),\ldots, r_{j_p,X}(w))$; $w_X = s_{q_1}\cdots s_{q_p}$  and  $\hpiix(\omega) = \sigma^{\varepsilon_{j_1}}_{q_1}\cdots \sigma^{\varepsilon_{j_p}}_{q_p}$.   Let us prove (x).   By symmetry,  it is enough to prove that   $\pi^*_{I,Y}(\pi^*_{I,X}(\omega)) = \pi^*_{I,X\cap Y}(\omega)$.  Write 
$\pi^*_{I, Y}(\pi^*_{I,X}(\omega)) = \sigma^{\varepsilon_{j'_1}}_{q'_1}\cdots \sigma^{\varepsilon_{j'_{p'}}}_{q'_{p'}}$ with $j'_1,\ldots, j'_{p'}$ in $\{j_1,\ldots,  j_p\}$.   Write  $\pi^*_{I, Y\cap X}(\omega) = \sigma^{\varepsilon_{j''_1}}_{q''_1}\cdots \sigma^{\varepsilon_{j''_{p''}}}_{q''_{p''}}$ with $j''_1,\ldots, j''_{p''}$ in $\{1,\ldots,  k\}$.    We have  $(w_{X})_Y = s_{q'_1}\cdots s_{q'_{p'}}$  and   $w_{X\cap Y} = s_{q''_1}\cdots s_{q''_{p''}}$.   By construction $\bbw{\widehat{R}_Y}(\theta_I^*(\hpiix(\omega)) )$  is obtained from  $\bbw{R}(\theta_I^*(\hpiix(\omega)))$  by removing the terms that do not belong to $W_Y$.  But $\theta_I^*(\hpiix(\omega)) = w_X$ and  $\bbw{R}(w_X)$ is obtained from  $\bbw{R}(w)$ by removing the terms that do not belong to $W_X$.  So,  $\bbw{\widehat{R}_Y}(\theta_I^*(\hpiix(\omega)))$ is obtained from $\bbw{R}(w)$ by removing the terms that do not belong to $W_X\cap W_Y$,  that is to $W_{X\cap Y}$.  Thus $\bbw{{R}}((w_{X})_Y) =  \bbw{\widehat{R}_Y}(\theta_I^*(\hpiix(\omega))) = \bbw{\widehat{R}_{Y\cap X}}(w) = \bbw{{R}}(w_{Y\cap X})$ and  $(w_{X})_Y =  w_{Y\cap X}$.  In particular $p' = p''$ and $q'_{\tilde{p}} = q''_{\tilde{p}}$ for $\tilde{p}$ in $1,\cdots,  p'$.  Now,  let $\tilde{p}$ be in $1,\ldots, p'$.  Then,  $j'_{\tilde{p}}$ is the position  of the ${\tilde{p}}^{th}$ term  of $\bbw{R}(w)$ that belongs to $W_Y\cap W_X$,  that is to $W_{X\cap Y}$.  Therefore,  $j'_{\tilde{p}} = j''_{\tilde{p}}$ and  $\pi^*_{I, Y}(\pi^*_{I,X}(\omega))  = \pi^*_{I,X\cap Y}(\omega)$.  Hence,  (x) holds.  Finally,  by Point~(ix),  Point~(xi) is a special case of Point~(x).    Indeed,   since  $\pi^*_{I,Y}(\omega)$ is a word on $\Sigma_Y \cup \Sigma_Y^{-1}$,   we have  $\pi^*_{I, X}(\pi^*_{I,Y}(\omega)) =  \pi^*_{Y, X}(\pi^*_{I,Y}(\omega))  $.      
\end{proof}
\begin{proof}[Proof of Proposition~\ref{main:prop1}] 
 Let $\omega,\omega'$ be in $(\Sigma_X\cup \Sigma_X^{-1})^*$ and such that $ \omega\equiv_I  \omega'$.  By Proposition~\ref{main:prop2}(i),   $\hpiix(\omega) = \omega$ and  $\hpiix(\omega') = \omega'$ and by Proposition~\ref{prop:pi},  $\hpiix(\omega) \equiv_X \hpiix(\omega')$.  So Point  (i) holds.   Let $Y\subseteq I$.  Clearly $A_{X\cap Y}\subseteq A_X\cap A_Y$.  Let $\bbw{w}$ be in $A_X\cap A_Y$.  Since  $\bbw{w}$ lies in $A_Y$,  by  Proposition~\ref{main:prop2}(ix), $\piix(\bbw{w})$ lies in $A_{X\cap Y}$.   Since $\bbw{w}$ lies in $A_X$,   by Corollary~\ref{cor:main} (i) (that we can apply as Point (i) is proved),   $\piix(\bbw{\omega}) = \bbw{\omega}$.  So,  $A_X\cap A_Y\subseteq A_{X\cap Y}$ and Point~(ii) holds.   Similarly $A_X^+ \subseteq A^+\cap A_X$.   The argument to prove  the inverse inclusion is similar as the one used to prove (ii),  replacing   Proposition~\ref{main:prop2}(ix) with  Proposition~\ref{main:prop2}(vi).  Finally,  Point (iv) follows from almost the same argument : if $\omega$ is a word representative of an element of $A_X$ then,  by Proposition~\ref{main:prop2}(i) and (iii),  $\hpiix(w)$ is another  word representative of the same element whom length is not greater than the one of $\omega$.  If moreover $\omega$ is a minimal word representative,  then  $\omega$ and $\hpiix(w)$ must have the same length.  By Proposition~\ref{main:prop2}(i) this imposes $\omega = \hpiix(\omega)$.  Hence $A_X$ is convex.  
\end{proof}

\begin{prp}\label{prp:lien:paris} the map $\hpiix$ coincides with the map $\hat{\pi}_X$ defined in \cite{BlPa2022}. 
\end{prp}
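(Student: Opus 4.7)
The plan is to verify the coincidence by showing that $\hpiix$ satisfies the same inductive recursion that characterizes $\hat{\pi}_X$ in \cite{BlPa2022}. Recall that $\hat{\pi}_X$ there is defined inductively: $\hat{\pi}_X(\epsilon) = \epsilon$, and for a word $\omega\sigma_i^{\varepsilon}$ of length $k+1$, writing $\bbw{u} = \theta_I(\bbw{\omega})$, one sets $\hat{\pi}_X(\omega\sigma_i^{\varepsilon}) = \hat{\pi}_X(\omega)$ if $\bbw{u}\,\bbw{s}_i\,\bbw{u}^{-1}\notin W_X$, and otherwise $\hat{\pi}_X(\omega\sigma_i^{\varepsilon}) = \hat{\pi}_X(\omega)\,\sigma_j^{\varepsilon}$, where $\bbw{s}_j$ is the unique element of $\bbw{S_X}$ conjugate to $\bbw{u}\,\bbw{s}_i\,\bbw{u}^{-1}$ via $\theta_X(\hat{\pi}_X(\omega))$. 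So the proof naturally splits into two parts: first, show that $\hpiix$ extends by at most one letter when a single letter is appended to the input, and second, identify exactly which letter it is.

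For the first part, I would combine Proposition~\ref{main:prop2}(i) with Proposition~\ref{main:prop2}(ii) applied to $\omega' = \sigma_i^{\varepsilon}$: the prefix property forces $\hpiix(\omega\sigma_i^{\varepsilon})$ to extend $\hpiix(\omega)$, while the length inequality forces the extension to be by $0$ or $1$ letter. Moreover, looking at Definition~\ref{def:fonct}, the extension happens by $1$ letter precisely when the new reflection $\bbw{r_{k+1}}(w\,s_i) = \bbw{u}\,\bbw{s}_i\,\bbw{u}^{-1}$ lies in $W_X$, i.e.\ when $r_{k+1,X}(w\,s_i)$ is nonempty. This already matches the criterion used in the definition of $\hat{\pi}_X$.

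For the second part, I need to verify that when the extension happens, the letter added is exactly $\sigma_j^{\varepsilon}$ with $\bbw{s}_j = \theta_X(\piix(\bbw{\omega}))^{-1}(\bbw{u}\,\bbw{s}_i\,\bbw{u}^{-1})\,\theta_X(\piix(\bbw{\omega}))$. Here Proposition~\ref{prop:crucial}(i)(a)--(b) is the key: with its notations, if $j_n = k+1$ is the position of the newly added reflection, then $\bbw{t_n} = \bbw{w_{k}}\,\bbw{s_i}\,\bbw{w_k}^{-1}$, and by (i)(b) $\bbw{v_k} = \bbw{t_1}\cdots \bbw{t_{n-1}} = \theta_X(\piix(\bbw{\omega}))$ and $\bbw{u} = \bbw{v_k}\,\bbw{w_k}$. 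Rewriting yields $\bbw{s}_j = \bbw{v_k}^{-1}(\bbw{u}\,\bbw{s}_i\,\bbw{u}^{-1})\bbw{v_k}$, which is exactly the Blufstein--Paris prescription. The exponent $\varepsilon$ is preserved by Definition~\ref{def:fonct}, which matches the sign rule in \cite{BlPa2022}.

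With both parts established, a straightforward induction on the length of the input word concludes that $\hpiix(\omega) = \hat{\pi}_X(\omega)$ letter by letter, for every $\omega$ in $(\Sigma_I\cup\Sigma_I^{-1})^*$. The main potential obstacle is notational: carefully lining up the two descriptions so that the element $\bbw{s}_j$ selected by the Blufstein--Paris recursion is literally the one produced by the $\bbw{t_n}$ of Proposition~\ref{prop:crucial}; once Proposition~\ref{prop:crucial} is in hand this is essentially bookkeeping, and no new algebraic input is required.
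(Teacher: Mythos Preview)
Your approach is essentially the paper's: both compare the two maps letter by letter, using Proposition~\ref{prop:crucial} to identify the contribution at each position. There is, however, one point where your proposal may skip the only non-trivial step. The paper records the definition of $\hat{\pi}_X$ from \cite{BlPa2022} with a \emph{sign-dependent} case split: for $\varepsilon_j = +1$ one takes $\bbw{t_j} = \bbw{w_{j-1}}\,\bbw{s_{i_j}}\,\bbw{w_{j-1}}^{-1}$, while for $\varepsilon_j = -1$ one takes $\bbw{t_j} = \bbw{w_{j}}\,\bbw{s_{i_j}}\,\bbw{w_{j}}^{-1}$. The whole content of the paper's proof is the observation that this split is spurious --- since $\bbw{w_j}\in\{\bbw{w_{j-1}},\,\bbw{w_{j-1}}\bbw{s_{i_j}}\}$ one always has $\bbw{w_{j}}\,\bbw{s_{i_j}}\,\bbw{w_{j}}^{-1} = \bbw{w_{j-1}}\,\bbw{s_{i_j}}\,\bbw{w_{j-1}}^{-1}$ --- after which the match with $\hpiix$ is immediate from Proposition~\ref{prop:crucial}. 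Your recursion for $\hat{\pi}_X$ is stated with no sign distinction (you always use the prefix $\bbw{u}$), so you have in effect assumed the conclusion of that observation rather than proved it. If \cite{BlPa2022} really presents the definition with the sign split, your argument has a small gap precisely there; the fix is the one-line computation just described. The detailed verification in your ``second part'' (matching $\bbw{t_n}$ with the Blufstein--Paris letter via $\bbw{v_k}$) is correct, but the paper treats that as evident once the sign issue is out of the way.
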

\begin{proof}  Let $X\subseteq I$ and $\omega = \sigma^{\varepsilon_1}_{i_1}\cdots \sigma^{\varepsilon_k}_{i_k}$ be a word on $\Sigma_I\cup \Sigma_I^{-1}$  where $\epsilon_i$  lies in $\{\pm 1\}$.  Let  us recall the definition of $\hat{\pi}_X$ given in~\cite{BlPa2022}.   Set $w = \theta_I^*(\omega) = s_{i_1}\cdots s_{i_k}$.   For $1\leq j\leq k$,  let $\bbw{s_{i_1}\cdots s_{i_j}} = \bbw{v_j w_j}$ with $\bbw{v_j}$ in $W_X$ and $\bbw{w_i}$ that is $(X,\emptyset)$-reduced.  If $\varepsilon_j = 1$,  set $\bbw{t_j} = \bbw{w_{j-1}s_{i_j}w_{j-1}}^{-1}$.  If $\varepsilon_j = -1$,  set $\bbw{t_j} = \bbw{w_{j}s_{i_j}w_{j}}^{-1}$.  If $\bbw{t_j}$ lies in $S_X$ set $\tau_j = \sigma^{\epsilon_{j}}_{q_j}$ so that $\bbw{t_j} = \bbw{s_{q_j}}$.  Otherwiese,  set $\tau_j = \varepsilon$,  the empty word.     Then $\hat{\pi}_X(\omega)$ is defined by  $\hat{\pi}_X(\omega) = \tau_1\cdots \tau_k$.  Now,  if $\bbw{w_{j-1}s_{i_j}w_{j-1}^{-1}}$ lies in $S_X$ then $\bbw{w_j}= \bbw{w_{j-1}}$; otherwise $\bbw{w_j}= \bbw{w_{j-1} s_{i_j}}$ (see the proof of Proposition~\ref{prop:crucial}(i)(b)).   In both cases,   $\bbw{w_{j}s_{i_j}w_{j}^{-1}} = \bbw{w_{j-1}s_{i_j}w_{j-1}^{-1}}$.  So,  in the above definition of $\hat{\pi}_X$,  this is not necessary to distinguish whether $\varepsilon_i$ is equal to $1$ or to $-1$ and  
$\hat{\pi}_X(\omega) = \hpiix(\omega)$.  \end{proof}

\begin{cor} \label{cor:lien:paris} The map $\piix$ coincides with the map $\pi_X$ defined in \cite{BlPa2022}. 
\end{cor}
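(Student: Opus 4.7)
The plan is to derive the corollary as an immediate consequence of Proposition~\ref{prp:lien:paris}. Recall that Proposition~\ref{main:prop2}(iii), combined with Proposition~\ref{prop:pi}, constructs $\piix : A_I \to A_X$ by sending the class of a word $\omega$ on $\Sigma_I \cup \Sigma_I^{-1}$ to the class in $A_X$ of $\hpiix(\omega)$; in other words, $\piix$ is precisely the group-level map obtained by passing to the quotient from the word-level map $\hpiix$. In \cite{BlPa2022}, the map $\pi_X$ is defined in the same way, namely as the map on $A_I$ induced by passing to the quotient from their word-level map $\hat{\pi}_X$.

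With this framing in place, the argument reduces to a one-liner. For any $\bbw{\omega}$ in $A_I$, pick a representative word $\omega$ on $\Sigma_I \cup \Sigma_I^{-1}$. Then $\piix(\bbw{\omega})$ is by definition the class in $A_X$ of $\hpiix(\omega)$, which equals the class of $\hat{\pi}_X(\omega)$ by Proposition~\ref{prp:lien:paris}, which in turn equals $\pi_X(\bbw{\omega})$ by the definition of $\pi_X$ in \cite{BlPa2022}. Hence $\piix$ and $\pi_X$ agree on every element of $A_I$.

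There is no genuine mathematical obstacle here: all the content has already been absorbed into Proposition~\ref{prp:lien:paris}, and what remains is a formal observation that inducing functorially from equal word-level maps yields equal group-level maps. The only nontrivial point is the bibliographic verification that \cite{BlPa2022} does indeed define $\pi_X$ as the quotient of $\hat{\pi}_X$; this is consistent with the way Proposition~\ref{main:prop2}(iii) itself is attributed to that article, so no real work is required beyond checking the reference.
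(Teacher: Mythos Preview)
Your proposal is correct and matches the paper's approach exactly: the paper gives no explicit proof of the corollary, treating it as an immediate consequence of Proposition~\ref{prp:lien:paris}, which is precisely the deduction you spell out.
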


\section{Intersection of parabolic subgroups}
We turn now to the proof of Theorems~\ref{main:thm1} and~\ref{main:thm2}.
We start with two preliminary lemmas.   We recall  that~$\KKI :  W_I \to A_I$ is the set section  of the morphism~$\Theta_I : A_I\to W_I$.   
 
\begin{lm}  \label{sec3:lem1} Let $X$ be a subset of $I$ and $\bbw{\omega},\bbw{\omega'}$ be in $A_I$ with $\omega$ in $CA_I$.   If $\bbw{\omega}\bbw{\omega'}\bbw{\omega}^{-1}$ lies in $A_X$ then  $\bbw{\omega}\bbw{\omega'}\bbw{\omega}^{-1} = \piix(\bbw{\omega}\bbw{\omega'}\bbw{\omega}^{-1}) = \piix(\bbw{\omega})\piix(\bbw{\omega'})\piix(\bbw{\omega})^{-1}$.
\end{lm}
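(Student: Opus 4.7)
The plan is to set $\bbw{\alpha} = \bbw{\omega}\bbw{\omega'}\bbw{\omega}^{-1}$, which by hypothesis lies in $A_X$, and to exploit the trivial rewriting $\bbw{\omega}\bbw{\omega'} = \bbw{\alpha}\bbw{\omega}$. This simple equality enables one to compute $\piix(\bbw{\omega}\bbw{\omega'})$ in two different ways: on the left-hand factorisation one can use Proposition~\ref{main:prop2}(viii), since $\bbw{\omega}\in CA_I$; on the right-hand factorisation one can use Corollary~\ref{cor:main}(ii), since $\bbw{\alpha}\in A_X$.

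More precisely, Proposition~\ref{main:prop2}(viii) yields $\piix(\bbw{\omega}\bbw{\omega'}) = \piix(\bbw{\omega})\,\piix(\bbw{\omega'})$, while Corollary~\ref{cor:main}(ii) yields $\piix(\bbw{\alpha}\bbw{\omega}) = \bbw{\alpha}\,\piix(\bbw{\omega})$. Equating these two quantities and right-multiplying by $\piix(\bbw{\omega})^{-1}$ in $A_X$ delivers the identity $\bbw{\alpha} = \piix(\bbw{\omega})\,\piix(\bbw{\omega'})\,\piix(\bbw{\omega})^{-1}$, which is the last equality in the statement.

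For the remaining equality $\bbw{\alpha} = \piix(\bbw{\alpha})$, I would appeal to Corollary~\ref{cor:main}(i): the map $\piix$ is a retraction of $A_I$ onto $A_X$, so it fixes every element of $A_X$, and in particular it fixes $\bbw{\alpha}$. Concatenating this observation with the previous paragraph produces the full chain $\bbw{\omega}\bbw{\omega'}\bbw{\omega}^{-1} = \bbw{\alpha} = \piix(\bbw{\alpha}) = \piix(\bbw{\omega}\bbw{\omega'}\bbw{\omega}^{-1}) = \piix(\bbw{\omega})\piix(\bbw{\omega'})\piix(\bbw{\omega})^{-1}$ required by the lemma.

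I do not anticipate any serious obstacle here: once Proposition~\ref{main:prop2} and Corollary~\ref{cor:main} are in hand, the only mildly creative move is to recognise that the two distinct left-multiplicativity properties of $\piix$ (one for $CA_I$-factors, one for $A_X$-factors) can be brought to bear on the \emph{same} element through the elementary identity $\bbw{\omega}\bbw{\omega'} = (\bbw{\omega}\bbw{\omega'}\bbw{\omega}^{-1})\,\bbw{\omega}$. Everything else is immediate bookkeeping, and no further results beyond those already proved in the excerpt are needed.
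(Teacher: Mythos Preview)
Your proposal is correct and follows essentially the same approach as the paper's proof: both use the identity $\bbw{\omega}\bbw{\omega'} = (\bbw{\omega}\bbw{\omega'}\bbw{\omega}^{-1})\bbw{\omega}$, apply Proposition~\ref{main:prop2}(viii) to the left-hand side and Corollary~\ref{cor:main}(ii) to the right-hand side, and invoke Corollary~\ref{cor:main}(i) for the first equality. The only cosmetic difference is that you explicitly right-multiply by $\piix(\bbw{\omega})^{-1}$, whereas the paper leaves this last step implicit.
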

\begin{proof} Since $\bbw{\omega}\bbw{\omega'}\bbw{\omega}^{-1}$ lies in $A_X$,  the first equality follows from  Corollary~\ref{cor:main}(i).  We have $\bbw{\omega}\bbw{\omega'} =(\bbw{\omega}\bbw{\omega'}\bbw{\omega}^{-1}) \bbw{\omega}$.  So $\piix(\bbw{\omega}\bbw{\omega'}) = \piix((\bbw{\omega}\bbw{\omega'}\bbw{\omega}^{-1}) \bbw{\omega})$.  By Proposition~\ref{main:prop2}(viii)  $\piix(\bbw{\omega}\bbw{\omega'}) =\piix(\bbw{\omega})\piix(\bbw{\omega'})$ and by Corollary~\ref{cor:main}(ii),  $\piix((\bbw{\omega}\bbw{\omega'}\bbw{\omega}^{-1}) \bbw{\omega}) = (\bbw{\omega}\bbw{\omega'}\bbw{\omega}^{-1}) \piix(\bbw{\omega})$.
\end{proof}
Note that Lemma~\ref{sec3:lem1} extends~\cite[Lemma~2.4]{BlPa2022}.
\begin{lm} \label{lem:lem22} Let $X,Y\subseteq I$ and $\bbw{w}\in W_I$ that is $(\emptyset,Y)$-reduced.  Write $\bbw{w}   = \bbw{w_1}\bbw{w_2}$ with $\bbw{w_1}$ in $W_X$  and $\bbw{w_2}$ $(X,Y)$-reduced.    Then,  $\piix(\KKI(\bbw{w})A_Y)  =  \piix(\KKI(\bbw{w})) A_{X_1}$ with   $S_{X_1} = S_X\cap \bbw{w_2}S_Y\bbw{w_2}^{-1}$. 
\end{lm}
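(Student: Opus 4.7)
The plan is to compute $\hpiix(\omega\eta)$ directly from the formula of Proposition~\ref{prop:crucial}, for a convenient choice of representative words, and to show both inclusions of the claimed equality. Choose a reduced word $u_1$ on $S_X$ for $\bbw{w_1}$ and a reduced word $u_2$ on $S_I$ for $\bbw{w_2}$; since $\bbw{w}=\bbw{w_1}\bbw{w_2}$ with length additivity, $u_1u_2$ is reduced. Let $\omega$ be the lift of $u_1u_2$ to $\Sigma_I^*$ with all exponents $+1$, so $\bbw{\omega}=\KKI(\bbw{w})$. Fix $\bbw{\eta}\in A_Y$, take a word $\eta\in(\Sigma_Y\cup\Sigma_Y^{-1})^*$ representing it, and set $v=\theta_I^*(\eta)$. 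By Remark~\ref{rem:conca}, the sequence $\bbw{R}(u_1u_2v)$ decomposes as the concatenation of $\bbw{R}(u_1)$, $\bbw{u_1}\bbw{R}(u_2)\bbw{u_1}^{-1}$ and $\bbw{w}\bbw{R}(v)\bbw{w}^{-1}$, and I will analyze each block.

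The first key claim is that the middle block contributes nothing to $\bbw{\widehat{R}_X}(u_1u_2v)$. A short minimality argument shows that every prefix $\bbw{u_2'}$ of $\bbw{w_2}$ along $u_2$ is itself $(X,\emptyset)$-reduced: otherwise, writing $\bbw{u_2'}=\bbw{x}\bbw{y}$ with $\bbw{x}\in W_X\setminus\{1\}$ and $\bbw{w_2}=\bbw{u_2'}\bbw{u_2''}$ with length additivity would contradict minimality of $\bbw{w_2}$ in $W_X\bbw{w_2}$. Since $\bbw{u_2'}\bbw{s_{i_n}}$ is then another such prefix, Lemma~\ref{lem:base1} gives $\bbw{u_2'}\bbw{s_{i_n}}\bbw{u_2'}^{-1}\notin W_X$, so each entry of the middle block lies outside $W_X$. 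For the last block, the retained entries are those with $\bbw{r_n}(v)\in W_Y\cap\bbw{w_2}^{-1}W_X\bbw{w_2}$; applying Proposition~\ref{prop:base2} to the $(X,Y)$-reduced element $\bbw{w_2}$, this intersection equals $W_{Y_1}$ where $\bbw{w_2}S_{Y_1}\bbw{w_2}^{-1}=S_{X_1}$ (the hypothesis on $S_{X_1}$ forces this coincidence).

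Next I extract $(u_1u_2v)_X$ via the $\bbw{t_n}$ formula of Proposition~\ref{prop:crucial}. For positions in the first block, $\bbw{t_n}$ is just the $n$-th letter of $u_1$, contributing exactly $u_1$ to $(u_1u_2v)_X$. For the selected positions in the last block, I first note that Proposition~\ref{prop:dbcoset}(ii) gives $\ell_I(\bbw{xw_2y})=\ell_I(\bbw{x})+\ell_I(\bbw{w_2y})$ for every $\bbw{x}\in W_X,\bbw{y}\in W_Y$, so $\bbw{w_2y}$ is $(X,\emptyset)$-reduced for every $\bbw{y}\in W_Y$. Hence the $(W_X,(X,\emptyset))$-decomposition of $\bbw{w_1w_2v[1..n'-1]}$ is $\bbw{w_1}\cdot\bbw{w_2v[1..n'-1]}$, yielding $\bbw{t_{|u_1|+n'}}=\bbw{w_2}\bbw{r_{n'}}(v)\bbw{w_2}^{-1}$. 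When $\bbw{r_{n'}}(v)\in W_{Y_1}$, this element lies in $S_X\cap W_{X_1}=S_{X_1}$. Tracking exponents (those at $u_1$-positions come from $\omega$, hence are $+1$, and those at $v$-positions come from $\eta$), I conclude $\hpiix(\omega\eta)=\hpiix(\omega)\cdot\alpha$ for some word $\alpha$ on $\Sigma_{X_1}\cup\Sigma_{X_1}^{-1}$, establishing the inclusion $\piix(\KKI(\bbw{w})A_Y)\subseteq\piix(\KKI(\bbw{w}))A_{X_1}$.

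For the reverse inclusion, given $\bbw{\alpha}\in A_{X_1}$ with a chosen word representative $\sigma_{i'_1}^{\epsilon_1}\cdots\sigma_{i'_n}^{\epsilon_n}$, define $\bbw{\eta}\in A_{Y_1}\subseteq A_Y$ as $\sigma_{j_1}^{\epsilon_1}\cdots\sigma_{j_n}^{\epsilon_n}$, where $s_{j_k}\in S_{Y_1}$ is the unique generator with $\bbw{w_2}\bbw{s_{j_k}}\bbw{w_2}^{-1}=\bbw{s_{i'_k}}$. Then $v=s_{j_1}\cdots s_{j_n}$ has all reflections in $W_{Y_1}$, so every $v$-position is selected; pushing each $\bbw{w_2}$ past each $\bbw{s_{j_k}}$ via the defining conjugation simplifies $\bbw{t_{|u_1|+n'}}$ to $\bbw{s_{i'_1}\cdots s_{i'_{n'-1}}}\bbw{s_{i'_{n'}}}\bbw{s_{i'_{n'-1}}\cdots s_{i'_1}}$, matching the reflection sequence of $u_1s_{i'_1}\cdots s_{i'_n}$, so $(u_1u_2v)_X=u_1s_{i'_1}\cdots s_{i'_n}$ and $\hpiix(\omega\eta)=\hpiix(\omega)\cdot\sigma_{i'_1}^{\epsilon_1}\cdots\sigma_{i'_n}^{\epsilon_n}$. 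The main obstacle is the careful bookkeeping in the last block: verifying via Proposition~\ref{prop:crucial} that the letter produced at each selected $v$-position is precisely the one conjugated from $S_{Y_1}$ to $S_{X_1}$ by $\bbw{w_2}$, which hinges on the clean $(X,\emptyset)$-decomposition $\bbw{w_1}\cdot\bbw{w_2y}$ of $\bbw{w_1w_2y}$ for any $\bbw{y}\in W_Y$.
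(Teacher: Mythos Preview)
Your overall architecture is the same as the paper's --- split $\bbw{R}(u_1u_2v)$ into three blocks, kill the middle one, and analyse the last one via Proposition~\ref{prop:base2} --- but there is a genuine gap in the execution.

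The claim ``Proposition~\ref{prop:dbcoset}(ii) gives $\ell_I(\bbw{xw_2y})=\ell_I(\bbw{x})+\ell_I(\bbw{w_2y})$ for every $\bbw{x}\in W_X,\ \bbw{y}\in W_Y$'', i.e.\ that $\bbw{w_2y}$ is $(X,\emptyset)$-reduced for every $\bbw{y}\in W_Y$, is false. Proposition~\ref{prop:dbcoset}(ii) only gives the two separate equalities $\ell_I(\bbw{xw_2})=\ell_I(\bbw{x})+\ell_I(\bbw{w_2})$ and $\ell_I(\bbw{w_2y})=\ell_I(\bbw{w_2})+\ell_I(\bbw{y})$. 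For a counterexample, take any $\bbw{y}\in W_{Y_1}\setminus\{1\}$: then $\bbw{w_2y}=(\bbw{w_2yw_2^{-1}})\bbw{w_2}$ with $\bbw{w_2yw_2^{-1}}\in W_{X_1}\subseteq W_X$ nontrivial, so $\bbw{w_2y}$ is \emph{not} $(X,\emptyset)$-reduced. In other words your claim fails precisely in the interesting case $X_1\neq\emptyset$. Consequently your formula $\bbw{t}_{|u_1u_2|+n'}=\bbw{w_2}\,\bbw{r_{n'}}(v)\,\bbw{w_2}^{-1}$ is wrong: the right-hand side is a reflection, not a simple reflection, contradicting Proposition~\ref{prop:crucial}(i)(a). (This also explains why in your reverse inclusion you end up ``matching reflection sequences'' instead of letters.)

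The repair is short. By Proposition~\ref{prop:crucial}(i)(b) the element $\bbw{w}_{p-1}$ at a $v$-position $p=|u_1u_2|+n'$ is $\bbw{w_2}$ times the product of the \emph{unselected} letters of $v[1..n'-1]$; in particular $\bbw{w}_{p-1}\in \bbw{w_2}W_Y$. Hence $\bbw{t}_n=\bbw{w}_{p-1}\,\bbw{s}_{i_p}\,\bbw{w}_{p-1}^{-1}\in \bbw{w_2}W_Y\bbw{w_2}^{-1}$, and combining with $\bbw{t}_n\in S_X$ (Proposition~\ref{prop:crucial}(i)(a)) gives $\bbw{t}_n\in S_X\cap \bbw{w_2}W_Y\bbw{w_2}^{-1}\subseteq S_X\cap W_{X_1}=S_{X_1}$. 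This is essentially how the paper argues: after reducing to $\bbw{w_1}=1$ it simply observes that all surviving entries of $\bbw{\widehat R_X}(ww')$ lie in $W_{X_1}$, which already forces $(ww')_X$ to be a word on $S_{X_1}$. For the reverse inclusion your idea is fine; just note that when $v$ is on $S_{Y_1}$ \emph{all} $v$-positions are selected, so Proposition~\ref{prop:crucial}(i)(b) gives $\bbw{w}_{p-1}=\bbw{w_2}$ exactly, and then $\bbw{t}_n=\bbw{w_2}\,\bbw{s}_{j_{n'}}\,\bbw{w_2}^{-1}=\bbw{s}_{i'_{n'}}$ directly yields $(u_1u_2v)_X=u_1 s_{i'_1}\cdots s_{i'_n}$.
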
 
\begin{proof} Let $\bbw{\omega} = \KKI(\bbw{w})$,  $\bbw{\omega_1} = \KKI(\bbw{w_1})$  and $\bbw{\omega_2} = \KKI(\bbw{w_2})$.  Then,   $\ell_S(\bbw{w}) = \ell_S(\bbw{w_1})+\ell_S(\bbw{w_2})$ and $\bbw{\omega} = \bbw{\omega_1}\bbw{\omega_2}$ with $\bbw{\omega_1}$ in~$A_X$.    First,  we have $\piix(\bbw{\omega_2}) =1$ and  $\piix(\bbw{\omega}) = \piix(\bbw{\omega_1}) = \bbw{\omega_1} = \piix(\bbw{\omega_1}) \piix(\bbw{\omega_2}) $ (see Proposition~\ref{main:prop2}(iv)).  For any $\bbw{\omega'}$ in $A_Y$,  by Corollary~\ref{cor:main}(ii),  $\piix(\bbw{\omega}\bbw{\omega'}) = \piix(\bbw{\omega_1}) \piix(\bbw{\omega_2}\bbw{\omega'})$.  So we may assume $\bbw{w_1} = 1$ and  $\bbw{w} = \bbw{w_2}$.  Now,  let $w = s_{i_1}\cdots s_{i_k}$ be a representative word of $\bbw{w}$ of minimal length.  Then $\sigma_{i_1}\cdots \sigma_{i_k}$ is a word representative of $\bbw{w}$.   Let  $\bbw{\omega'}$ be in~$A_Y$ and $\omega' = \sigma^{\varepsilon_1}_{i'_1}\cdots \sigma^{\varepsilon_{k'}}_{i'_{k'}}$ be a representative word of  $\bbw{\omega'}$ of minimal length.  Set $w' = \hpiix(\omega')  =  s_{i'_1}\cdots s_{i'_{k'}}$.  Then all the $i'_{j}$ lie in $Y$ and $\sigma_{i_1}\cdots \sigma_{i_k}\sigma^{\varepsilon_1}_{i'_1}\cdots \sigma^{\varepsilon_{k'}}_{i'_{k'}}$ is a representative word of $\bbw{\omega}\bbw{\omega'}$.  Since $\bbw{w}$ is $(X,\emptyset)$-reduced and $w$ is a minimal word representative of $\bbw{w}$,  the $k$ first entries of $\bbw{R_X}(ww')$ are equal to $1$ and all the next ones lie in $\bbw{w}W_Y\bbw{w}^{-1} \cap W_X$.  By  Proposition~\ref{prop:base2}, there is  $X_1\subseteq X$ and $Y_1\subseteq Y$ such that $\bbw{w}S_{Y_1} = S_{X_1}\bbw{w}$ and $\bbw{w}W_Y\bbw{w}^{-1} \cap W_X = W_{X_1} = \bbw{w}W_{Y_1}\bbw{w}^{-1}$.  Then,  $(ww')_X$ is a word on $S_{X_1}$.  Since $\theta^*_I(\omega\omega') = ww'$,   we get that $\piix(\bbw{\omega}\bbw{\omega'})$ lies in  $A_{X_1}$ and  $\piix(\KKI(\bbw{w})A_Y) \subseteq A_{X_1}$.  Conversely,  let $\bbw{\omega''}$ lie in  $A_{X_1}$.   Then $\bbw{\omega}^{-1}\bbw{\omega''}\bbw{\omega}$ lies in $A_{Y_1}$.  Indeed,  since $\bbw{w}S_{Y_1} = S_{X_1}\bbw{w}$  and $\bbw{w}$ is $(X,Y)$-reduced,  for any $i$ in $X_1$ there is $j$ in $Y_1$ so that  $\bbw{w}\bbw{s_{j}} = \bbw{s_i}\bbw{w}$ with $\ell_I(\bbw{w}\bbw{s_{i}} ) = \ell_I(\bbw{s_{j}}\bbw{w}) = \ell_I(\bbw{w)} +1$.  Therefore,  $\bbw{\omega}\bbw{\sigma_{j}} = \bbw{\sigma_i}\bbw{\omega}$.  Now,   $\piix(\KKI(\bbw{w}) \bbw{\omega}^{-1}\bbw{\omega''}\bbw{\omega}) = \piix(\bbw{\omega''}\bbw{\omega}) =  \piix(\bbw{\omega''})\piix(\bbw{\omega}) =  \bbw{\omega''}$.  So,  the other inclusion holds. 
\end{proof}

The following result proves Theorem~\ref{main:thm2}. 
\begin{prp} \label{prop:lem22bis} Let $X,Y\subseteq I$ and $\bbw{w}\in W_I$.  Let  $\bbw{w} = \bbw{w_1}\bbw{w_2}\bbw{w'_2}$ with $\bbw{w_1}$ in $W_X$,  $\bbw{w'_2}$ in $W_Y$ and $\bbw{w_2}$ $(X,Y)$-reduced.    Write $\bbw{\omega} = \KKI(\bbw{w})$,    $\bbw{\omega_1} = \KKI(\bbw{w_1})$,  and $\bbw{\omega'_2} = \KKI(\bbw{w'_2})$.  Let $X_1, Y_1$ be defined as in Proposition~\ref{prop:base2}.  Then $\bbw{\omega} A_Y \bbw{\omega}^{-1}\cap A_X = \bbw{\omega_1}A_{X_1}{\bbw{\omega_1}}^{-1}$ and  $\bbw{\omega}^{-1} A_X \bbw{\omega}\,\cap A_Y = {\bbw{\omega'_2}}^{-1}A_{Y_1}\bbw{\omega'_2}$.  
\end{prp}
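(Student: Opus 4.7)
The plan is to reduce to the case where $\bbw{w}$ is itself $(X,Y)$-reduced, prove the first equality in that case using Proposition~\ref{prop:base2} (for the $\supseteq$ inclusion) and Lemma~\ref{lem:lem22} (for the $\subseteq$ inclusion), and then deduce the second equality by applying the first to $\bbw{w}^{-1}$ with $X$ and $Y$ interchanged. By Proposition~\ref{prop:dbcoset}(i) the decomposition $\bbw{w} = \bbw{w_1}\bbw{w_2}\bbw{w'_2}$ is length-additive, so $\bbw{\omega} = \bbw{\omega_1}\bbw{\omega_2}\bbw{\omega'_2}$ with $\bbw{\omega_2} = \KKI(\bbw{w_2})$. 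Since $\bbw{\omega'_2}\in A_Y$ and $\bbw{\omega_1}\in A_X$, conjugation yields
$$\bbw{\omega}A_Y\bbw{\omega}^{-1}\cap A_X = \bbw{\omega_1}\bigl(\bbw{\omega_2}A_Y\bbw{\omega_2}^{-1}\cap A_X\bigr)\bbw{\omega_1}^{-1},$$
so it suffices to prove $\bbw{\omega_2}A_Y\bbw{\omega_2}^{-1}\cap A_X = A_{X_1}$.

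For the $\supseteq$ inclusion I would invoke Proposition~\ref{prop:base2}: for each $s_j\in S_{Y_1}$ there is $s_i\in S_{X_1}$ with $\bbw{w_2}\bbw{s_j} = \bbw{s_i}\bbw{w_2}$, and since $\bbw{w_2}$ is both $(\emptyset,Y)$- and $(X,\emptyset)$-reduced (Proposition~\ref{prop:dbcoset}(iii)) both products have length $\ell_I(\bbw{w_2})+1$. The section $\KKI$ then lifts this identity to $\bbw{\omega_2}\bbw{\sigma_j}\bbw{\omega_2}^{-1} = \bbw{\sigma_i}$ in $A_I$, whence $\bbw{\omega_2}A_{Y_1}\bbw{\omega_2}^{-1} = A_{X_1}$ and this subgroup lies in $\bbw{\omega_2}A_Y\bbw{\omega_2}^{-1}\cap A_X$. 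For the $\subseteq$ inclusion I would apply Lemma~\ref{lem:lem22} to $\bbw{w_2}$ (its $W_X$-factor being trivial), obtaining $\piix(\bbw{\omega_2}A_Y) = A_{X_1}$. Given $\bbw{\eta} \in \bbw{\omega_2}A_Y\bbw{\omega_2}^{-1}\cap A_X$, write $\bbw{\eta}\bbw{\omega_2} = \bbw{\omega_2}\bbw{\omega'}$ with $\bbw{\omega'}\in A_Y$; Corollary~\ref{cor:main}(ii) gives $\piix(\bbw{\eta}\bbw{\omega_2}) = \bbw{\eta}\,\piix(\bbw{\omega_2}) = \bbw{\eta}$ (using $\piix(\bbw{\omega_2})=1$ from the proof of the lemma), while the lemma gives $\piix(\bbw{\omega_2}\bbw{\omega'}) \in A_{X_1}$, so $\bbw{\eta} \in A_{X_1}$.

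For the second equality, $\bbw{w}^{-1} = \bbw{w'_2}^{-1}\bbw{w_2}^{-1}\bbw{w_1}^{-1}$ is the length-additive decomposition relative to the double-coset $W_Y\bbw{w}^{-1}W_X$, with $\bbw{w_2}^{-1}$ being $(Y,X)$-reduced. Conjugating $\bbw{w_2}W_{Y_1}\bbw{w_2}^{-1} = W_{X_1}$ by $\bbw{w_2}^{-1}$ shows that the pair associated to $\bbw{w_2}^{-1}$ in the swapped setting is $(Y_1, X_1)$; applying the first equality (already proved) to $\bbw{w}^{-1}$ with $X$ and $Y$ interchanged, and using $\KKI(\bbw{w}^{-1}) = \bbw{\omega}^{-1}$ together with $\KKI(\bbw{w'_2}^{-1}) = \bbw{\omega'_2}^{-1}$, then gives $\bbw{\omega}^{-1}A_X\bbw{\omega}\cap A_Y = \bbw{\omega'_2}^{-1}A_{Y_1}\bbw{\omega'_2}$. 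The main obstacle I anticipate is the $\subseteq$ direction of the first equality: the trick is to rewrite the conjugation equation as the left-right identity $\bbw{\eta}\bbw{\omega_2} = \bbw{\omega_2}\bbw{\omega'}$ so that applying $\piix$ and combining Lemma~\ref{lem:lem22} with Corollary~\ref{cor:main}(ii) produces the element $\bbw{\eta}$ itself inside $A_{X_1}$; everything else is bookkeeping on the length-additive factorisation.
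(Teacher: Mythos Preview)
Your argument for the first equality is correct and essentially identical to the paper's: reduce to the $(X,Y)$-reduced case by the length-additive factorisation, get $\supseteq$ from the lift $\bbw{\omega_2}\bbw{\sigma_j}=\bbw{\sigma_i}\bbw{\omega_2}$ (the paper refers to this via the end of the proof of Lemma~\ref{lem:lem22}), and get $\subseteq$ by applying $\piix$ to the rewritten identity $\bbw{\eta}\bbw{\omega_2}=\bbw{\omega_2}\bbw{\omega'}$ together with Lemma~\ref{lem:lem22} and Corollary~\ref{cor:main}(ii).

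There is, however, a genuine error in your derivation of the second equality. You assert that $\KKI(\bbw{w}^{-1})=\bbw{\omega}^{-1}$ and $\KKI(\bbw{w'_2}^{-1})=\bbw{\omega'_2}^{-1}$, but this is false: if $s_{i_1}\cdots s_{i_k}$ is a reduced word for $\bbw{w}$, then $\KKI(\bbw{w}^{-1})=\bsss_{i_k}\cdots\bsss_{i_1}$, whereas $\bbw{\omega}^{-1}=\bsss_{i_k}^{-1}\cdots\bsss_{i_1}^{-1}$, and these differ in $A_I$ since the $\bsss_i$ are not involutions. So applying the first equality to $\bbw{w}^{-1}$ yields a statement about $\KKI(\bbw{w}^{-1})A_X\KKI(\bbw{w}^{-1})^{-1}\cap A_Y$, which is \emph{not} the intersection you want.

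The paper avoids this by a direct one-line manoeuvre in the reduced case: having established $\bbw{\omega_2}A_Y\bbw{\omega_2}^{-1}\cap A_X=A_{X_1}$ and $\bbw{\omega_2}^{-1}A_{X_1}\bbw{\omega_2}=A_{Y_1}$, simply conjugate by $\bbw{\omega_2}^{-1}$ to get
\[
\bbw{\omega_2}^{-1}A_X\bbw{\omega_2}\cap A_Y=\bbw{\omega_2}^{-1}\bigl(A_X\cap\bbw{\omega_2}A_Y\bbw{\omega_2}^{-1}\bigr)\bbw{\omega_2}=\bbw{\omega_2}^{-1}A_{X_1}\bbw{\omega_2}=A_{Y_1},
\]
and then undo the reduction to recover the factor $\bbw{\omega'_2}^{-1}$. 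Your symmetry idea is natural but does not go through as written; replacing it with this conjugation step repairs the proof.
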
 

\begin{proof}   We have~$\ell_I(\bbw{w})  = \ell_I(\bbw{w_1})+\ell_I(\bbw{w_2})+\ell_I(\bbw{w'_2})$ and  $\bbw{\omega}= \bbw{\omega_1}\bbw{\omega_2}\bbw{\omega'_2}$ with $\bbw{\omega_1}$ in $A_X$ and  $\bbw{\omega'_2}$ in $A_Y$.  So $\bbw{\omega} A_Y \bbw{\omega}^{-1}\cap A_X = \bbw{\omega_1}\bigg(\bbw{\omega_2} A_Y \bbw{\omega_2}^{-1}\cap A_X\bigg){\bbw{\omega_1}}^{-1}$.  Similarly,  $\bbw{\omega}^{-1} A_X \bbw{\omega}\cap A_Y = {\bbw{\omega'_2}}^{-1}\bigg(\bbw{\omega_2}^{-1} A_X \bbw{\omega_2}\cap A_Y\bigg){\bbw{\omega'_2}}$.  So,  we may assume $\bbw{w_1} = \bbw{w'_2} = 1$ and $\bbw{w}$ is $(X,Y)$-reduced.  In this case,  $\theta_I(\bbw{\omega}) = \bbw{w}$ and $\piix(\bbw{\omega}) = 1$ (see Proposition~\ref{main:prop2}(iv)).   Let $\bbw{\omega'}$ lie in $A_Y$.  Set $\bbw{\omega''} = \bbw{\omega} \bbw{\omega'} \bbw{\omega}^{-1}$ and assume    $\bbw{\omega''}$ lies in $A_{X}$.  Then 
$\bbw{\omega} \bbw{\omega'} = \bbw{\omega''} \bbw{\omega}$ and $\piix(\bbw{\omega} \bbw{\omega'}) = \piix( \bbw{\omega''} \bbw{\omega})  = \piix( \bbw{\omega''}) \piix(\bbw{\omega}) =   \bbw{\omega''}$.  But,  by Lemma~\ref{lem:lem22},   $\piix(\bbw{\omega} \bbw{\omega'})$ lies in $A_{X_1}$.   So $\bbw{\omega} A_Y \bbw{\omega}^{-1}\cap A_X \subseteq A_{X_1}$.  Conversely,   $A_{X_1}$ is included in $A_{X}$  and,   as seen in the end of the proof of Lemma~\ref{lem:lem22},  $\bbw{\omega}^{-1}A_{X_1}\bbw{\omega} = A_{Y_1}$.  So  $A_{X_1} \subseteq  \bbw{\omega}A_{Y}\bbw{\omega}^{-1}$  and  the other inclusion holds. Thus,  $\bbw{\omega} A_Y \bbw{\omega}^{-1}\cap A_X = A_{X_1}$ and $\bbw{\omega}^{-1} A_X \bbw{\omega}\,\cap A_Y = \bbw{\omega}^{-1} (A_X\cap \bbw{\omega} A_Y\bbw{\omega}^{-1})\bbw{\omega} = \bbw{\omega}^{-1} A_{X_1}\bbw{\omega} = A_{Y_1}$.   
\end{proof}

\begin{proof}[Proof of Theorem~\ref{main:thm1}]
Assume Conjecture~\ref{conj:conj2} holds. 
Let $X,Y$ be in $I$ and $\bbw{\omega}$ be in $A_I$.   We want to prove that $(\bbw{\omega}A_Y\bbw{\omega}^{-1}) \cap A_X$ is a parabolic subgroup of $A_I$.  Write $\theta_I(\bbw{\omega}) = \bbw{w} = \bbw{w_1}\bbw{w_2}\bbw{w'_2}$ with $\bbw{w_1}$ in $W_X$,  $\bbw{w'_2}$ in $W_Y$ and $\bbw{w_2}$ $(X,Y)$-reduced.   Then  $$(\bbw{\omega}A_{Y} \bbw{\omega}^{-1})\cap A_X =  \KKI(\bbw{w_1})\bigg(\big({\KKI(\bbw{w_1}})^{-1}\bbw{\omega}\,\KKI(\bbw{w'_2})^{-1}\big) A_{Y}\big( \KKI(\bbw{{w'_2}}){\bbw{\omega}}^{-1}\KKI(\bbw{w_1})\big)\cap A_X  \bigg){\KKI(\bbw{w_1})}^{-1}$$  
and $\theta_I(\big({\KKI(\bbw{w_1}})^{-1}\bbw{\omega}\,\KKI(\bbw{{w'}_2})^{-1}) = {\bbw{w_1}}^{-1}\bbw{w}\bbw{{w'}_2}^{-1} = \bbw{w_2}$.  So we can assume  $\bbw{w_1} = \bbw{w'_2} = 1$ and $\bbw{w}$ is $(X,Y)$-reduced.   We set  $\bbw{\omega_2} = \KKI(\bbw{w})$.   Then $\theta_I(\bbw{\omega_2}) = \bbw{w}$.   Let  $\bbw{\omega'}$ be in $A_Y$.   As already seen, $\bbw{w}$ being  $(X,Y)$-reduced,   we have $\piix(\bbw{\omega_2})= 1$ and,  by Lemma~\ref{lem:lem22},  $\piix(\bbw{\omega_2}A_Y) =  A_{X_1}$
where  $S_{X_1} = S_X\cap \bbw{w}S_Y\bbw{w}^{-1}$.  Then  $\piix(\bbw{\omega_2}\bbw{\omega'})$ lies in~$A_{X_1}$.
 Assume $\bbw{\omega}\bbw{\omega'}\bbw{\omega}^{-1}$ belongs to $A_X$.  Then $\piix(\bbw{\omega}\bbw{\omega'}\bbw{\omega}^{-1})  = \bbw{\omega}\bbw{\omega'}\bbw{\omega}^{-1}$.  On the other hand,  $\theta_I(\bbw{\omega}\,\bbw{\omega}_2^{-1}) = \theta_I(\bbw{\omega})\theta_I(\bbw{\omega}_2^{-1}) = \bbw{w}\,\bbw{w}^{-1}  = 1$ and  $\bbw{\omega}\,\bbw{\omega}_2^{-1}$ lies  in $CA_I$.  Now,   $(\bbw{\omega}\bbw{\omega'}\bbw{\omega}^{-1} ) \big(\bbw{\omega}\, \bbw{\omega_2}^{-1}\big) \bbw{\omega_2}    =  (\bbw{\omega}\, \bbw{\omega_2}^{-1})\bbw{\omega_2}\bbw{\omega'}$.   So,   by  Proposition~\ref{main:prop2}(vii) and (viii),  applying $\piix$  we get,  
$(\bbw{\omega}\bbw{\omega'}\bbw{\omega}^{-1} )  \piix\big(\bbw{\omega}\,\bbw{\omega_2}^{-1}\big)  \piix(\bbw{\omega_2})    =   \piix\big(\bbw{\omega}\, \bbw{\omega_2}^{-1}\big) \piix\big(\bbw{\omega_2}\bbw{\omega'}\big) \in \piix\big(\bbw{\omega}\, \bbw{\omega_2}^{-1}\big) A_{X_1}$    and $\bbw{\omega}\bbw{\omega'}\bbw{\omega}^{-1}$ lies in   
$\piix\big(\bbw{\omega}\, \bbw{\omega_2}^{-1}\big)A_{X_1}\,\big(\piix\big(\bbw{\omega}\, \bbw{\omega_2}^{-1}\big)\big)^{-1}$.  Hence,  $\bbw{\omega} A_Y \bbw{\omega}^{-1}\cap A_{X} \subseteq \piix\big(\bbw{\omega}\, \bbw{\omega_2}^{-1}\big)A_{X_1}\,\big(\piix\big(\bbw{\omega}\, \bbw{\omega_2}^{-1}\big)\big)^{-1} \subseteq A_X$ and $\bbw{\omega} A_Y \bbw{\omega}^{-1}\cap A_{X} = \bbw{\omega} A_Y \bbw{\omega}^{-1} \cap \bigg( \piix\big(\bbw{\omega}\, \bbw{\omega_2}^{-1}\big)A_{X_1}\,\big(\piix\big(\bbw{\omega}\, \bbw{\omega_2}^{-1}\big)\big)^{-1}\bigg)$.  By Proposition~\ref{prop:lem22bis},   $A_{X_1}  =  \bbw{\omega_2} A_Y \bbw{\omega_2}^{-1}\cap A_X =   \bbw{\omega_2} (A_Y\cap  \bbw{\omega_2}^{-1} A_X \bbw{\omega_2} )\bbw{\omega_2}^{-1} = \bbw{\omega_2} A_{Y_1}\bbw{\omega_2}^{-1}$,  so we get $$\bbw{\omega} A_Y \bbw{\omega}^{-1}\cap A_{X} = \bbw{\omega} \bigg( A_Y \cap  \bigg( \big(\bbw{\omega}^{-1}\piix\big(\bbw{\omega}\, \bbw{\omega_2}^{-1}\big)\bbw{\omega_2}\big) \, A_{Y_1}\, \big(\bbw{\omega}^{-1}\piix\big(\bbw{\omega}\, \bbw{\omega_2}^{-1}\big)\bbw{\omega_2}\big)^{-1}\bigg)\bigg) \bbw{\omega}^{-1}$$

Now,  $\bbw{\omega}^{-1}\piix(\bbw{\omega}\bbw{\omega_2}^{-1})\bbw{\omega_2} = (\bbw{\omega}^{-1}\bbw{\omega_2})\bbw{\omega_2}^{-1}\piix(\bbw{\omega}\bbw{\omega_2}^{-1})\bbw{\omega_2}$ and $\bbw{\omega}^{-1}\bbw{\omega_2}$ lies in $CA_I$,   then,  $\piix(\bbw{\omega}\, \bbw{\omega_2}^{-1})$ lies in $CA_X$ and  $\bbw{\omega}^{-1}\piix(\bbw{\omega}\, \bbw{\omega_2}^{-1})\bbw{\omega_2}$ lies in $CA_I$. 
So,  up to replacing $(X,Y,\bbw{\omega})$ with $(Y,Y_1, \bbw{\omega}^{-1}\piix(\bbw{\omega}\, \bbw{\omega_2}^{-1})\bbw{\omega_2})$,  we may assume  $Y\subseteq X$ with $\bbw{\omega}$ in $CA_I$.  In  this case $\bbw{w_2} = 1$ and $A_{X_1} = A_{Y}$.   Thus $\bbw{\omega} A_Y \bbw{\omega}^{-1}\cap A_{X} =$  $\bbw{\omega} A_Y \bbw{\omega}^{-1} \cap \bigg( \piix\big(\bbw{\omega}\big)A_{Y}\big(\piix\big(\bbw{\omega}\big)\big)^{-1}\bigg) =  \bbw{\omega} \bigg(A_Y  \cap \bigg( (\bbw{\omega}^{-1}  \piix(\bbw{\omega})\big)A_{Y}\big(\bbw{\omega}^{-1} \big(\piix(\bbw{\omega})\big)^{-1}\bigg)\bigg) \bbw{\omega}^{-1}$  and we are done. 
\end{proof}

Acknowledgement: I thanks Fran\c cois Digne and Jean Michel for useful comments. 
\bibliographystyle{acm}
\bibliography{biblio}
\end{document}